\pgfplotsset{compat=1.15}
\crefname{assumption}{Assumption}{Assumptions}
\crefname{property}{Property}{Properties}
\newtheorem{theorem}{Theorem}
\newtheorem{corollary}{Corollary}
\newtheorem{lemma}{Lemma}
\newtheorem{assumption}{Assumption}
\newtheorem{property}{Property}
\newtheorem*{remark}{Remark}
\newcommand{\E}[1]{\mathbb{E}\left[ #1 \right]}
\newcommand{\inlineE}[1]{\mathbb{E}[ #1 ]}
\newcommand{\norm}[1]{\left\Vert #1 \right\Vert}
\newcommand{\inlinenorm}[1]{\Vert #1 \Vert}
\newcommand{\Ind}[1]{\mathbf{1}\left[ #1 \right]}
\newcommand{\inlineInd}[1]{\mathbf{1}[ #1 ]}
\newcommand{\Prb}[1]{\mathbb{P}\left[ #1 \right]}
\newcommand{\inlinePrb}[1]{\mathbb{P}[ #1 ]}
\newcommand{\condE}[2]{\mathbb{E}\left[ \left. #1 \right\vert #2 \right]}
\newcommand{\inlinecondE}[2]{\mathbb{E}[ #1 | #2 ]}
\newcommand{\condPrb}[2]{\mathbb{P}\left[ \left. #1 \right\vert #2 \right]}
\newcommand{\inlinecondPrb}[2]{\mathbb{P}[  #1 \vert #2 ]}
\newcommand{\Flb}{F_{l.b.}}
\title{Stochastic Gradient Descent on Nonconvex Functions with General Noise Models}
\author{%
  Vivak Patel \\
  Department of Statistics\\
  University of Wisconsin\\
  Madison, WI 53706\\
  \texttt{vivak.patel@wisc.edu} \\
   \And
   Shushu Zhang \\
   Department of Statistics \\
   University of Wisconsin\\
   Madison, WI 53706 \\
   \texttt{szhang695@wisc.edu} \\
}
\begin{document}

\maketitle

\begin{abstract}
Stochastic Gradient Descent (SGD) is a widely deployed optimization procedure throughout data-driven and simulation-driven disciplines, which has drawn a substantial interest in understanding its global behavior across a broad class of nonconvex problems and noise models. Recent analyses of SGD have made noteworthy progress in this direction, and these analyses have innovated important and insightful new strategies for understanding SGD. However, these analyses often have imposed certain restrictions (e.g., convexity, global Lipschitz continuity, uniform H\"{o}lder continuity, expected smoothness, etc.) that leave room for innovation. In this work, we address this gap by proving that, for a rather general class of nonconvex functions and noise models, SGD's iterates either diverge to infinity or converge to a stationary point with probability one. By further restricting to globally H\"{o}lder continuous functions and the expected smoothness noise model, we prove that---regardless of whether the iterates diverge or remain finite---the norm of the gradient function evaluated at SGD's iterates converges to zero with probability one and in expectation. As a result of our work, we broaden the scope of nonconvex problems and noise models to which SGD can be applied with rigorous guarantees of its global behavior.
\end{abstract}

\section{Introduction}
Stochastic gradient descent (SGD) is widely deployed throughout data science and adjacent fields to solve
\begin{align}\label{eqn:obj}
\min_{\theta} F(\theta)
\end{align}
where $F : \mathbb{R}^p \to \mathbb{R}$ and is defined to be the expectation of a function $f : \mathbb{R}^p \times \mathcal{X} \to \mathbb{R}$, where $\mathcal{X}$ is the range of a well-defined random variable $X$.\footnote{Hence, we can assume that there is a $\sigma$-Algebra defined on sets of $\mathcal{X}$ that ensure it is a measure space that can support $X$. We can also define an appropriate push forward measure to specify a probability space. Consequently, we can define an expectation with respect to this probability space.}
Owing to its broad usage, SGD's global behavior on different classes of functions $f$ (and, hence, $F$) has been of substantial interest. While there are many works that have provided insight, understanding SGD's global behavior has been notably advanced by several recent works \citep{asi2019,lei2019,patel2020,khaled2020} that we overview presently. 

To explain the insights of these works, we will need some notation. We define $\dot{F}(\theta)$ as the gradient of $F$ evaluated at the point $\theta \in \mathbb{R}^p$, and we define $\dot{f}(\theta,X)$ as the gradient of $f$ with respect to its first argument evaluated at $(\theta,X)$, which follows the notation of \citet{patel2020}. We now describe these essential works.
\begin{enumerate}[leftmargin=*,itemsep=0pt]
\item \citet{asi2019} show that when $f(\cdot, x)$ is a closed, convex, subdifferentiable function for all $x \in \mathcal{X}$, then SGD's iterates are stable with probability one and will converge to a solution under some additional assumptions. Distinguishingly, \citet{asi2019} allow $\inlineE{ \inlinenorm{ \dot f(\theta, X)}_2^2}$ (i.e., the noise model) to grow arbitrarily with the distance between the current iterate and the solution set. To our knowledge, this is the most general assumption for the noise under which convergence has been demonstrated, and the proof relies intimately on convexity \citep[see][Lemma 3.7]{asi2019}.
\item \citet{lei2019} prove that for uniformly lower bounded, nonconvex functions, $f$, for which 
\begin{equation} \label{eqn-uniform-holder}
(\exists L > 0 )(\forall \theta_1, \theta_2 \in \mathbb{R}^p)(\forall x \in \mathcal{X}) : \norm{ \dot{f}(\theta_1, x) - \dot{f}(\theta_2, x)}_2 \leq L \norm{ \theta_1 - \theta_2 }_2^\alpha,
\end{equation}
with $\alpha \in (0,1]$, the objective function, $F$, evaluated at the SGD iterates converges almost surely to a bounded random variable. Moreover, \citet{lei2019} show that, when $\alpha = 1$, the expected value of the norm of the gradient function, $\dot F$, evaluated at the SGD iterates converges to zero.
\item \citet{patel2020} shows that for a lower bounded, nonconvex objective function, $F$, for which $\dot{F}$ is globally Lipschitz continuous and for which
\begin{equation} \label{eqn-noise-control}
(\exists C_1, C_2 \geq 0)(\forall \theta \in \mathbb{R}^p):\E{ \norm{\dot f(\theta,X) }_2^2} \leq C_1 + C_2 \norm{ \dot{F}(\theta) }_2^2,
\end{equation}
the norm of the gradient function, $\dot F$, evaluated at the SGD iterates converges to zero with probability one. Moreover, \citet{patel2020} allows for matrix-valued learning rates. The later global convergence work of \citet{mertikopoulos2020} offers similar conclusions under more stringent conditions, but also explores local properties such as local rates of convergence.
\item \citet{khaled2020} show that for a lower bounded, nonconvex objective function, $F$, for which $\dot{F}$ is globally Lipschitz continuous and for which
\begin{equation} \label{eqn-noise-expected-smoothness}
(\exists C_1, C_2, C_3 \geq 0)(\forall \theta \in \mathbb{R}^p):\E{ \norm{\dot f(\theta,X)}_2^2} \leq C_1 + C_2 \norm{ \dot{F}(\theta)}_2^2 + C_3 F(\theta),
\end{equation}
the \textit{smallest} of all expected norms of the gradient evaluated at the SGD iterates converges to zero. Similar results are explored by \citet{gower2020}.
\end{enumerate}

\subsection*{Contributions}
Our goal here is to move towards a more general theory of convergence that combines all of these threads under a single analysis framework. Specifically, by innovating on the strategies of \cite{lei2019} and \cite{patel2020}, we will prove the following results for SGD with matrix-valued learning rates, which we state informally now and formalize later.
\begin{enumerate}[leftmargin=*,itemsep=0pt]
\item We prove that for a lower bounded, nonconvex objective function, $F$, for which $\dot{F}$ is locally $\alpha$-H\"{o}lder continuous and for which $\inlineE{ \inlinenorm{ \dot f(\theta,X) }_2^2}$ is controlled by an arbitrary, non-negative upper semi-continuous function, either the iterates of SGD diverge to infinity, or they remain finite. See \cref{theorem-capture,theorem-convergence-local-holder-general-noise} in \cref{section-capture}. 
\item When the iterates remain finite, the objective function, $F$, evaluated at the iterates converges to a finite random variable, and  the gradient norm evaluated at the iterates converges to zero with probability one. With this result, we are able to relax the noise models of \citet{bottou2018, asi2019, lei2019, patel2020, khaled2020}; relax the global, uniform H\"{o}lder continuity assumption of \citep{lei2019}; and relax the global, Lipschitz continuity assumption of \citet{bottou2018,patel2020,khaled2020}. See \cref{theorem-convergence-local-holder-general-noise} in \cref{section-capture}.
\item When the iterates diverge, we can also say something interesting under slightly stronger conditions. Specifically, by strengthening the local H\"{o}lder assumption to a global H\"{o}lder assumption of $\dot{F}$ and restricting the noise model on $\dot{f}$ to \eqref{eqn-noise-expected-smoothness}, we are able to show that, regardless of the iterate behavior, the objective function evaluated at the iterates converges with probability one to an \textit{integrable} random variable, and the norm of the gradient function evaluated at the iterates converges to zero with probability one \textit{and} in $L^1$. This result directly generalize the results of \citet{bottou2018,lei2019,patel2020,khaled2020}, and a host of other more specialized results that are covered by these works. See \cref{theorem-global-holder-expected-smoothness} in \cref{section-global-holder-expected-smoothness}.
\end{enumerate}

To our knowledge, our results are the most general for the global analysis of SGD as they allow for rather general nonconvex functions (e.g., locally H\"{o}lder gradient function) and general noise models (e.g., arbitrary, upper semi-continuous bound on the second moment). As a result, our results broaden the scope of problems to which SGD can be used with rigorous guarantees of its asymptotic behavior.

\subsection*{Organization}

In \cref{section-sgd}, Stochastic Gradient Descent (SGD) with matrix-valued learning rates are precisely specified. In \cref{section-capture}, SGD's iterates are shown to either diverge or remain finite over a general class of nonconvex functions and noise models, which are precisely specified in this section; moreover, when SGD's iterates remain finite, then they are shown to converge to a stationary point with probability one. In \cref{section-global-holder-expected-smoothness}, the gradient function evaluated at SGD's iterates is shown to converge to zero with probability one and in $L^1$, under the stronger assumptions of global H\"{o}lder continuity and under a more restricted noise model. In \cref{section-conclusion}, we conclude this work with a discussion of limitations and future work.

\section{Stochastic Gradient Descent} \label{section-sgd}
We define Stochastic Gradient Descent to be the procedure that beings with an arbitrary $\theta_0 \in \mathbb{R}^p$ and generates $\lbrace \theta_{k} : k \in \mathbb{N} \rbrace$ according to the recursion
\begin{align}\label{eqn:iterates}
\theta_{k+1}=\theta_{k}-M_k \dot{f}(\theta_k,X_{k+1}),
\end{align} 
where $\lbrace X_k : k \in \mathbb{N} \rbrace$ are independent and are identically distributed to $X$; and $\lbrace M_k : k+1 \in \mathbb{N} \rbrace \subset \mathbb{R}^{p\times p}$ are matrices whose properties we specify momentarily. Let $\mathcal{F}_0 = \sigma(\theta_0)$ and $\mathcal{F}_k = \sigma(\theta_0, X_1,\ldots,X_k)$ for all $k \in \mathbb{N}$. 
\begin{remark}
We note that if $\theta_0$ is random, then we will condition the results below on $\mathcal{F}_0 = \sigma(\theta_0)$. However, to avoid this additional notation, we will not state this explicitly.
\end{remark}

First, we will require that 
\begin{property}\label{P1}
	$\lbrace M_k : k + 1 \in \mathbb{N} \rbrace$ are symmetric, positive definite matrices.
\end{property}
\cref{P1} is a natural extension to the scalar learning rate case in which the learning rate is required to be positive valued at each iterate.

Second, we consider the natural extension of the Robbins-Monro condition for $\alpha$-H\"{o}lder continuous functions with matrix-valued learning rates. Specifically, we require that $\lbrace M_k \rbrace$ satisfy
\begin{property}\label{P2}
	$\sum_{k=1}^{\infty} \lambda_{\max}(M_k)^{1+\alpha} =: S < \infty$ with $\alpha \in (0,1]$,
\end{property}
and
\begin{property}\label{P3}
	$\sum_{k=1}^{\infty} \lambda_{\min}(M_k) = \infty$,
\end{property}
where $\lambda_{\max}(\cdot)$ and $\lambda_{\min}(\cdot)$ denote the largest and smallest eigenvalues of the given symmetric matrix.

For \cref{section-global-holder-expected-smoothness}, we will require the following condition, which controls the relationship between $\lambda_{\max}(M_k)$ and $\lambda_{\min}(M_k)$. Note, such a condition is readily satisfied for scalar learning rates satisfying \cref{P2}.
\begin{property}\label{P4}
	$\lim_{n \to \infty} \lambda_{\max}(M_k)^\alpha \kappa(M_k)=0$ where $\kappa(M_k) = \norm{M_k}_2 \norm{M_k^{-1}}_2$.
\end{property}


\section{A Capture Theorem and Its Consequences} \label{section-capture}
We will begin by defining a set of assumptions under which we will analyze SGD and we will discuss how it relates to the assumptions in the aforementioned works. We refer the reader to \S 2 of \citet{patel2020} for a review of common assumptions in the nonconvex landscape and their relationships.

We begin with a common assumption that ensure that minimizing the objective function is a reasonable effort. 
\begin{assumption} \label{A1}
There exists $\Flb \in \mathbb{R}$ such that $\Flb \leq F(\theta)$ for all $\theta \in \mathbb{R}^p$.
\end{assumption}
Indeed, this is the assumption made by \citet{khaled2020} and \citet{patel2020}. This assumption is implied by \citet{lei2019}'s more stringent assumption that $f(\theta,x) \geq 0$ for all $\theta \in \mathbb{R}^p$ and for all $x \in \mathcal{X}$. Finally, this assumption is implied by \citet{asi2019}'s assumptions that $F$ is convex and that the optimization problem has a nonempty solution set.

We also require the common assumption that the stochastic gradient $\dot{f}(\theta,X)$ are unbiased. We note that this assumption is common in most works, and can be relaxed as shown in \S 4 of \citet{bottou2018}. Fortunately, this relaxation is rather easy to account for within our analysis.
\begin{assumption} \label{A2}
For all $\theta \in \mathbb{R}^p$, $\inlineE{ \dot f(\theta,X)} = \dot{F}(\theta)$. 
\end{assumption}

We now come to our two less common, yet more general assumptions in comparison to what can be found in the literature. The first assumption is inspired by the noise model assumption of \citet{asi2019}, which allows the trace of the variance of $\dot{f}(\theta,X)$ to grow with $\theta$'s distance from the assumed solution set. Here, we have no such luxury of having a guaranteed solution set, and so we require a more general assumption.
\begin{assumption} \label{A3a}
Let $G : \mathbb{R}^p \to \mathbb{R}_{\geq 0}$ be an upper semi-continuous function. For all $\theta \in \mathbb{R}^p$, $\inlineE{ \inlinenorm{ \dot f (\theta, X) }_2^2} \leq G(\theta)$. 
\end{assumption}
We see that \cref{A3a} readily generalized the noise modeling assumptions of \citet{khaled2020} and \citet{patel2020}. Moreover, \cref{A3a} is implied under the more stringent conditions in \citet{lei2019}, specifically by using \eqref{eqn-uniform-holder} and \cref{A1} with an application of Young's inequality.

For the last assumption, we recall that a function is locally $\alpha$-H\"{o}lder continuous if for any compact set $\mathcal{K} \subset \mathbb{R}^p$, $\exists L > 0$ such that $\forall \varphi_1, \varphi_2 \in \mathcal{K}$, $\inlinenorm{ \dot F (\varphi_1) - \dot F (\varphi_2) }_2 \leq L \inlinenorm{ \varphi_1 - \varphi_2}_2^\alpha$. 
\begin{assumption} \label{A4a}
$\dot{F}$ is locally $\alpha$-H\"{o}lder continuous for some $\alpha \in (0,1]$.
\end{assumption}
Again, \cref{A4a} is weaker than the global Lipschitz assumptions of \citet{khaled2020} and \citet{patel2020}, and is implied by the global, uniform $\alpha$-H\"{o}lder continuity assumed in \citet{lei2019}. Interestingly, \cref{A4a} is cleverly circumvented in \citet[Lemma 3.6]{asi2019} using the monotonicity of the gradient function and Young's inequality, and one could argue that it would generalize \cref{A4a}. However, this argument would fall apart for matrix-valued learning rates, as the monotonicity of the gradient operator is no longer guaranteed even in the convex case. 

With these assumption, we begin by defining a central property of SGD that is often overlooked or implicitly required, and has several immediate consequences and applications. The proof is a direct application of the Borel-Cantelli lemma and can be found in \cref{section-analysis-general}.
\begin{theorem}[Capture Theorem] \label{theorem-capture}
Let $\bar \theta \in \mathbb{R}^p$ be arbitrary. Let $\lbrace \theta_k \rbrace$ be defined as in \eqref{eqn:iterates} and satisfy \cref{P1,P2}. If \cref{A3a} holds, then for any $R \geq 0$, 
\begin{equation}
\Prb{ \norm{ \theta_{k+1} - \bar \theta}_2 > R, ~ \norm{ \theta_k - \bar \theta}_2 \leq R~i.o.} = 0.
\end{equation}
\end{theorem}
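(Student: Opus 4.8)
The plan is to apply the Borel--Cantelli lemma to the upcrossing events. Fix $R \geq 0$ and write $r_k := \norm{\theta_k - \bar\theta}_2$ and $A_k := \lbrace r_{k+1} > R,\ r_k \leq R \rbrace$. Since $A_k \in \mathcal{F}_{k+1}$, L\'evy's conditional form of the Borel--Cantelli lemma gives $\lbrace A_k~i.o.\rbrace = \lbrace \sum_k \condPrb{A_k}{\mathcal{F}_k} = \infty \rbrace$ almost surely, so it suffices to show $\sum_k \condPrb{A_k}{\mathcal{F}_k} < \infty$ almost surely. The driving mechanism is that an upcrossing of level $R$ forces the one-step increment to exceed the distance from $\theta_k$ to the sphere: on $A_k$ we have $\norm{\theta_{k+1} - \theta_k}_2 \geq r_{k+1} - r_k > R - r_k \geq 0$, so the whole argument reduces to controlling the increments $\norm{\theta_{k+1}-\theta_k}_2 = \norm{M_k \dot f(\theta_k, X_{k+1})}_2$ while $\theta_k$ stays inside the ball.

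First I would assemble the summability ingredients. On $\lbrace r_k \leq R\rbrace$ the iterate $\theta_k$ lies in the compact ball $\bar B(\bar\theta, R)$; because $G$ is upper semi-continuous (\cref{A3a}), it attains a finite maximum $G_R := \max_{\theta \in \bar B(\bar\theta,R)} G(\theta)$ on that set. Since $X_{k+1}$ is independent of $\mathcal{F}_k$ and distributed as $X$, \cref{A3a} gives $\condE{\norm{\dot f(\theta_k, X_{k+1})}_2^2}{\mathcal{F}_k} \leq G(\theta_k) \leq G_R$ on $\lbrace r_k \leq R\rbrace$, while \cref{P1} gives $\norm{\theta_{k+1} - \theta_k}_2 \leq \lambda_{\max}(M_k)\norm{\dot f(\theta_k, X_{k+1})}_2$. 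Applying the conditional Markov inequality at the $(1+\alpha)$-th moment, together with $\condE{\norm{\dot f(\theta_k,X_{k+1})}_2^{1+\alpha}}{\mathcal{F}_k} \leq (\condE{\norm{\dot f(\theta_k,X_{k+1})}_2^{2}}{\mathcal{F}_k})^{(1+\alpha)/2}$ (Jensen, as $1+\alpha \leq 2$), yields for every fixed $\delta > 0$
\begin{equation}
\condPrb{\norm{\theta_{k+1} - \theta_k}_2 > \delta}{\mathcal{F}_k}\Ind{r_k \leq R} \leq \frac{G_R^{(1+\alpha)/2}}{\delta^{1+\alpha}}\, \lambda_{\max}(M_k)^{1+\alpha}.
\end{equation}
Taking expectations, summing in $k$, and invoking \cref{P2} shows the series is finite---which is exactly where the exponent $1+\alpha$ in \cref{P2} is needed---so by Borel--Cantelli the events $\lbrace \norm{\theta_{k+1} - \theta_k}_2 > \delta,\ r_k \leq R\rbrace$ occur only finitely often almost surely, and this holds simultaneously for all $\delta = 1/m$, $m \in \mathbb{N}$. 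Consequently the increments taken from inside the ball tend to zero along the relevant times.

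The hard part will be upgrading this ``fixed-threshold'' statement to the exact upcrossing event, because an upcrossing can be realized by an arbitrarily small increment when $\theta_k$ sits close to the sphere $\lbrace r = R\rbrace$; thus the fixed-$\delta$ conclusion does not by itself forbid infinitely many upcrossings. On $\lbrace A_k~i.o.\rbrace$ one would be forced to have $r_k \uparrow R$ and $r_{k+1} \downarrow R$ along a subsequence with vanishing increments, and excluding this boundary behavior is the true content of the proof. I expect to close the argument through the conditional Borel--Cantelli route: combining $\norm{\theta_{k+1}-\theta_k}_2 > R - r_k$ on $A_k$ with the moment bounds above yields the pointwise estimate $\condPrb{A_k}{\mathcal{F}_k} \leq \Ind{r_k \leq R}\min\lbrace 1,\ (\lambda_{\max}(M_k)\sqrt{G_R}/(R-r_k))^{1+\alpha}\rbrace$, and the crux is to show that $\sum_k \condPrb{A_k}{\mathcal{F}_k}$ is almost surely finite despite the shrinking denominators---equivalently, that the trajectory does not approach the sphere from inside, and cross it, often enough to make the conditional series diverge. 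Establishing this control on the near-sphere occupation, rather than any single Markov estimate, is the step I anticipate being the main obstacle.
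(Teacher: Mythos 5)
Your partial argument is correct as far as it goes, and it parallels the paper's own proof: the paper likewise fixes a margin $\epsilon>0$, bounds $\Prb{\norm{\theta_{k+1}-\bar\theta}_2\ge R+\epsilon,~\norm{\theta_k-\bar\theta}_2\le R}$ by $\epsilon^{-2}\norm{M_k}_2^2G_R$ using Chebyshev's inequality and the upper semi-continuity of $G$, and then sums over $k$ via \cref{P2} and the ordinary Borel--Cantelli lemma (your conditional Borel--Cantelli and $(1+\alpha)$-moment variant changes nothing essential). But your proposal then stops: the passage from this fixed-margin statement to the exact upcrossing event is precisely the step you defer (``control on the near-sphere occupation''), so what you have actually proved is only the margin version of the theorem, namely that for every fixed $\epsilon>0$ the events $\lbrace\norm{\theta_{k+1}-\bar\theta}_2\ge R+\epsilon,~\norm{\theta_k-\bar\theta}_2\le R\rbrace$ occur finitely often almost surely. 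As a proof of the stated theorem, that is a genuine gap.

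You should know, however, that the step you could not complete is exactly where the paper's proof also fails. The paper closes the argument by letting $\epsilon_n\downarrow0$ and invoking ``the union of countably many measure zero sets,'' which implicitly requires
\begin{equation*}
\left\lbrace \norm{\theta_{k+1}-\bar\theta}_2>R,~\norm{\theta_k-\bar\theta}_2\le R~i.o.\right\rbrace\subseteq\bigcup_{n}\left\lbrace \norm{\theta_{k+1}-\bar\theta}_2\ge R+\epsilon_n,~\norm{\theta_k-\bar\theta}_2\le R~i.o.\right\rbrace,
\end{equation*}
and this inclusion is false: a limit supremum in $k$ of an increasing union in $n$ can be strictly larger than the union over $n$ of the limit suprema, and the trajectories it misses are exactly the ones you worried about, whose upcrossing margins shrink to zero. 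Indeed, no argument can close this gap, because the theorem is false under \cref{P1,P2,A3a} alone. Take $p=1$ with degenerate noise, $f(\theta,x)=F(\theta)=\tfrac{2}{3}\vert\theta-1\vert^{3/2}$, so that $\dot{f}(\theta,x)=\dot{F}(\theta)=\mathrm{sign}(\theta-1)\vert\theta-1\vert^{1/2}$; set $\bar\theta=0$, $R=1$, $\theta_0=0$, and use scalar learning rates $M_{2j}=2\cdot2^{-j}$, $M_{2j+1}=\tfrac{5}{4}\cdot2^{-j}$. An induction shows $\theta_{2j}=1-4^{-j}$ and $\theta_{2j+1}=1+4^{-j}$: every step overshoots the minimizer $\theta=1$, alternating sides. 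Here \cref{P1,P2} hold, \cref{A3a} holds with the continuous function $G(\theta)=\vert\theta-1\vert$, and even \cref{A1,A2,A4a,A4b} hold since $\dot{F}$ is globally $1/2$-H\"{o}lder; yet $\vert\theta_{k+1}\vert>1\ge\vert\theta_k\vert$ for every even $k$, so the upcrossing event occurs infinitely often with probability one (and your conditional series $\sum_k\condPrb{A_k}{\mathcal{F}_k}$ is genuinely infinite there; gaps decaying like $j^{-2}(\log j)^{-2}$ instead of $4^{-j}$ also accommodate \cref{P3}). So the obstacle you flagged is not merely hard but fatal: under these hypotheses only the margin version is provable, and the capture theorem must either be weakened to that statement or be given stronger hypotheses, e.g.\ $\sum_k\lambda_{\max}(M_k)<\infty$, which makes one-step increments from the ball summable in first moment but is incompatible with \cref{P3}. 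Note also that the same $\epsilon_n\downarrow0$ device reappears in the proof of \cref{lemma-limsup-grad-constrained}, so the issue propagates beyond this one theorem.
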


\cref{theorem-capture} has several immediate consequences. For example, \cref{theorem-capture} is central to proving local rates of convergence as it ensures that the iterates are eventually captured within some basin of attraction, in which some sort of local analysis can be done; however, such a local analysis is not the focus of this work. For a global perspective, \cref{theorem-capture} implies the following result.

\begin{theorem} \label{theorem-convergence-local-holder-general-noise}
Let $\lbrace \theta_k \rbrace$ be defined as in \eqref{eqn:iterates} and satisfy \cref{P1,P2,P3}. Suppose \cref{A1,A2,A3a,A4a} hold. Let $\mathcal{A}_1 = \lbrace \liminf_{k \to \infty} \norm{ \theta_k}_2 = \infty \rbrace$ and $\mathcal{A}_2 = \lbrace \lim_{k \to \infty} \norm{ \theta_k}_2 < \infty \rbrace$. Then, the following statements hold
\begin{enumerate}[leftmargin=*,itemsep=0em]
\item $\inlinePrb{ \mathcal{A}_1} + \inlinePrb{ \mathcal{A}_2} = 1$.
\item There exists a finite random variable, $F_{\lim}$, such that, on $\mathcal{A}_2$, $\lim_{k \to \infty} F(\theta_k) = F_{\lim}$ and $\lim_{k \to \infty} \inlinenorm{ \dot{F}(\theta_k) }_2 = 0$ with probability one.
\end{enumerate} 
\end{theorem}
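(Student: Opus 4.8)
The plan is to dispatch the first claim directly from \cref{theorem-capture} and to obtain the second from a localized almost-supermartingale argument, with the upgrade from \(\liminf\) to \(\lim\) as the final—and most delicate—step.

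For the first statement, observe that \(\mathcal{A}_1 = \lbrace \lim_{k} \norm{\theta_k}_2 = \infty \rbrace\) because \(\liminf = \infty\) forces \(\limsup = \infty\). Hence the complement of \(\mathcal{A}_1 \cup \mathcal{A}_2\) is precisely the oscillation event \(\lbrace \liminf_k \norm{\theta_k}_2 < \limsup_k \norm{\theta_k}_2 \rbrace \cap \lbrace \liminf_k \norm{\theta_k}_2 < \infty \rbrace\). On this event I would extract rationals \(q_1 < q_2\) with \(\liminf < q_1 < q_2 < \limsup\), so that \(\norm{\theta_k}_2 \leq q_1\) infinitely often and \(\norm{\theta_k}_2 > q_2\) infinitely often; taking, after each visit to \(\lbrace \norm{\cdot}_2 \leq q_1 \rbrace\), the last index before the next exit above \(q_2\) produces infinitely many \(k\) with \(\norm{\theta_k}_2 \leq q_2\) and \(\norm{\theta_{k+1}}_2 > q_2\). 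Applying \cref{theorem-capture} with \(\bar\theta = 0\) and \(R = q_2\) shows this has probability zero, and a countable union over \(q_1 < q_2 \in \mathbb{Q}\) gives \(\inlinePrb{(\mathcal{A}_1 \cup \mathcal{A}_2)^c} = 0\).

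For the second statement, I would first reduce to bounded iterates: by the first part, \(\mathcal{A}_2 = \bigcup_{R \in \mathbb{N}} \lbrace \sup_k \norm{\theta_k}_2 \leq R \rbrace\) up to a null set, so it suffices to argue on each \(\lbrace \sup_k \norm{\theta_k}_2 \leq R \rbrace\). On the compact ball \(\bar{B}(0,R)\), \cref{A4a} supplies a Hölder constant \(L_R\), and the upper semicontinuity of \(G\) in \cref{A3a} supplies \(\bar{G}_R := \max_{\bar{B}(0,R)} G < \infty\). The \(\alpha\)-Hölder descent lemma \(F(\theta_{k+1}) \leq F(\theta_k) + \dot{F}(\theta_k)^\top(\theta_{k+1}-\theta_k) + \tfrac{L_R}{1+\alpha}\norm{\theta_{k+1}-\theta_k}_2^{1+\alpha}\), the update \eqref{eqn:iterates}, \cref{A2}, the bound \(\dot F(\theta_k)^\top M_k \dot F(\theta_k) \geq \lambda_{\min}(M_k)\norm{\dot F(\theta_k)}_2^2\), and the power-mean estimate \(\condE{\norm{\dot f(\theta_k,X_{k+1})}_2^{1+\alpha}}{\mathcal{F}_k} \leq \bar{G}_R^{(1+\alpha)/2}\) (from \cref{A3a}, since \(1+\alpha \leq 2\)) together give, for \(V_k := F(\theta_k) - \Flb \geq 0\) (by \cref{A1}),
\[
\condE{V_{k+1}}{\mathcal{F}_k} \leq V_k - \lambda_{\min}(M_k)\norm{\dot F(\theta_k)}_2^2 + \frac{L_R \bar{G}_R^{(1+\alpha)/2}}{1+\alpha}\lambda_{\max}(M_k)^{1+\alpha}.
\]
Since the additive terms sum to \(\tfrac{L_R \bar G_R^{(1+\alpha)/2}}{1+\alpha}S < \infty\) by \cref{P2}, the Robbins--Siegmund almost-supermartingale theorem yields \(V_k \to V_\infty\) finite (defining \(F_{\lim} = V_\infty + \Flb\)) and \(\sum_k \lambda_{\min}(M_k)\norm{\dot F(\theta_k)}_2^2 < \infty\); with \cref{P3} this forces \(\liminf_k \norm{\dot F(\theta_k)}_2 = 0\). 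One technical point I would handle carefully is that the displayed inequality is valid only while \(\theta_k\), \(\theta_{k+1}\), and the segment between them lie in the ball, whereas \(\lbrace \sup_k \norm{\theta_k}_2 \leq R \rbrace\) is not \(\mathcal{F}_k\)-measurable; I would therefore build the supermartingale from the stopped process \(\theta_{k \wedge \tau_R}\), \(\tau_R = \inf\lbrace k : \norm{\theta_k}_2 > R\rbrace\), using a slightly larger ball to absorb the single exit step, and then intersect with \(\lbrace \tau_R = \infty \rbrace\) and let \(R \to \infty\).

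The remaining and hardest step is upgrading \(\liminf_k \norm{\dot F(\theta_k)}_2 = 0\) to \(\lim_k \norm{\dot F(\theta_k)}_2 = 0\). I would first show the increments vanish: since \(\condPrb{\lambda_{\max}(M_k)\norm{\dot f(\theta_k,X_{k+1})}_2 > \eta}{\mathcal{F}_k} \leq \eta^{-2}\lambda_{\max}(M_k)^2 \bar{G}_R\) and \(\sum_k \lambda_{\max}(M_k)^2 < \infty\) (as \(1+\alpha \leq 2\) and \(\lambda_{\max}(M_k) \to 0\) by \cref{P2}), a conditional Borel--Cantelli argument gives \(\norm{\theta_{k+1}-\theta_k}_2 \to 0\), whence \(\norm{\dot F(\theta_{k+1}) - \dot F(\theta_k)}_2 \leq L_R\norm{\theta_{k+1}-\theta_k}_2^\alpha \to 0\) by \cref{A4a}. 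With the gradient norms now varying arbitrarily slowly, I would argue by contradiction: if \(\limsup_k \norm{\dot F(\theta_k)}_2 = 2\epsilon > 0\) there are infinitely many disjoint excursions on which \(\norm{\dot F(\theta_k)}_2\) climbs from below \(\epsilon\) to above \(2\epsilon\) while staying at least \(\epsilon\), each contributing at least \(\epsilon^2 \sum_{\mathrm{exc}} \lambda_{\min}(M_k)\) to the finite sum above (so \(\sum_{\mathrm{exc}}\lambda_{\min}(M_k) \to 0\)), while \cref{A4a} forces a displacement \(\norm{\theta_{n_i}-\theta_{m_i}}_2 \geq (\epsilon/L_R)^{1/\alpha}\) across each excursion. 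The crux—and the step I expect to be the main obstacle—is to convert this fixed displacement into a positive lower bound on \(\sum_{\mathrm{exc}}\lambda_{\min}(M_k)\) contradicting its decay; this is where the matrix-valued learning rate is most delicate, since displacement is naturally controlled by \(\lambda_{\max}\) while the summable drift is weighted by \(\lambda_{\min}\). I anticipate closing the gap by combining the vanishing increments, the convergence of \(F(\theta_k)\), and the a.s.\ convergence of the martingale noise \(\sum_j \dot F(\theta_j)^\top M_j(\dot f(\theta_j,X_{j+1}) - \dot F(\theta_j))\) (which is \(L^2\)-bounded because \(\sum_k \lambda_{\max}(M_k)^2 < \infty\)) to control the displacement within each excursion directly, rather than through the raw step sizes.
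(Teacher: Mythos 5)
Parts 1 and 2 of your proposal are sound and essentially coincide with the paper's route: the oscillation-event argument over rational levels is the paper's \cref{corollary-limits}, and your localized descent-plus-Robbins--Siegmund argument is the paper's \cref{lemma-recursion-constrained} and \cref{corollary-convergence-objective-constrained}; your stopped-process device $\theta_{k \wedge \tau_R}$ is an acceptable substitute for the paper's adapted indicators $\Ind{\mathcal{B}_k(R)}$ (both resolve the same measurability issue, and the paper likewise needs a correction term, $\partial F_R$, for the single exit step). The genuine gap is in the final step, upgrading $\liminf_k \inlinenorm{\dot F(\theta_k)}_2 = 0$ to $\lim_k \inlinenorm{\dot F(\theta_k)}_2 = 0$, and it is exactly the obstacle you flagged yourself: the excursion argument cannot be closed under the hypotheses of this theorem. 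The theorem assumes only \cref{P1,P2,P3} --- \emph{not} \cref{P4} --- so there is no control whatsoever on $\kappa(M_k)$. Decompose the displacement across an excursion into drift plus martingale: the martingale part does vanish (its conditional variances are bounded by $\lambda_{\max}(M_k)^2 G_R$, which is summable), but the drift part is bounded only by a constant times $\sum_{\mathrm{exc}} \lambda_{\max}(M_k)$, whereas the finiteness of $\sum_k \lambda_{\min}(M_k)\inlinenorm{\dot F(\theta_k)}_2^2$ forces only $\sum_{\mathrm{exc}} \lambda_{\min}(M_k) \to 0$. Nothing in \cref{P2} prevents infinitely many disjoint blocks with $\sum_{\mathrm{exc}} \lambda_{\max}(M_k) \geq c > 0$ (take $\lambda_{\max}(M_k) \asymp 1/k$ over geometrically growing blocks) while $\lambda_{\min}(M_k)$ decays fast enough that the block sums of $\lambda_{\min}$ vanish; then the required displacement $(\epsilon/L_R)^{1/\alpha}$ is perfectly consistent with all your established facts and no contradiction is obtained. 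The scalar-rate version of this argument (Bertsekas--Tsitsiklis style) works precisely because $\lambda_{\min} = \lambda_{\max}$ there; your closing idea via the martingale $\sum_j \dot F(\theta_j)'M_j(\dot f(\theta_j,X_{j+1}) - \dot F(\theta_j))$ does not repair the drift term, which is the term carrying the $\lambda_{\max}$-versus-$\lambda_{\min}$ mismatch.

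The paper avoids excursions entirely, and its device is the natural repair for your proof. In \cref{lemma-limsup-grad-constrained}, a Borel--Cantelli ``capture'' argument is applied to the gradient-norm sequence itself: a \emph{single-step} crossing $\inlinenorm{\dot F(\theta_{k+1})}_2 > \delta + L_R\epsilon^\alpha$ from $\inlinenorm{\dot F(\theta_k)}_2 \leq \delta$ forces, via \cref{A4a}, the one-step event $\inlinenorm{M_k \dot f(\theta_k, X_{k+1})}_2 > \epsilon$, whose probability is Chebyshev-bounded by $\epsilon^{-2}\inlinenorm{M_k}_2^2 G_R$ and hence summable by \cref{P2}; Borel--Cantelli then kills infinitely many crossings, and letting $\epsilon_n \downarrow 0$ removes the $L_R\epsilon^\alpha$ slack. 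Combining this with your $\liminf$ statement (the paper's \cref{lemma-liminf-grad-constrained}) yields the limit in \cref{corollary-convergence-grad-constrained}: if $\inlinenorm{\dot F(\theta_k)}_2 > \delta$ infinitely often, then, since it is also $\leq \delta$ infinitely often, level-$\delta$ crossings would occur infinitely often --- a null event. The point is that this argument needs only one-step control, for which $\lambda_{\max}$ alone suffices, and never has to compare an accumulated displacement against an accumulated $\lambda_{\min}$-weighted sum; substituting it for your excursion argument completes your proof.
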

\begin{proof}[Proof of \cref{theorem-convergence-local-holder-general-noise}]
Note, the referenced results can be found in \cref{section-analysis-general}. Informally, \cref{theorem-capture} implies that the limit supremum and limit infimum of $\inlinenorm{\theta_k}_2$ cannot be distinct. As a result, the limit of $\inlinenorm{\theta_k}$ is either infinite or is finite with probability one. This is formalized in \cref{corollary-limits}.

To show the remaining statement, we begin by constraining to the event $\lbrace \sup_{k} \inlinenorm{\theta_k}_2 \leq R \rbrace$ for arbitrary $R \geq 0$. Then, on this event we can prove that $F(\theta_k)$ converges to a finite random variable. Similarly, we can prove that on this event $\inlinenorm{ \dot{F}(\theta_k)}_2$ must converge to zero. By taking the union over all $R \in \mathbb{N}$, we can conclude that these two statements hold on the event $\lbrace \sup_k \norm{\theta_k}_2 < \infty \rbrace$ which is implied by $\mathcal{A}_2$. These arguments are formalized in \cref{corollary-convergence-objective-constrained} for the objective function statement, and \cref{corollary-convergence-grad-constrained} for the gradient function statement. For the gradient function statement, the proof strategy is adapted from \citet{patel2020}.
\end{proof}

\begin{remark}
By \cref{theorem-capture}, we have that $\lbrace \lim_{k \to \infty} \inlinenorm{ \theta_k}_2 < \infty \rbrace$ is equal to $\lbrace \lim_{k \to \infty} \inlinenorm{ \theta_k - \bar \theta }_2 < \infty \rbrace$ for an arbitrary choice of $\bar{\theta}$. Thus, by choosing $\bar{\theta}_1$ and $\bar{\theta}_2$ such that $\lbrace 0, \bar \theta_1, \bar \theta_2 \rbrace$ are not colinear, then we conclude by \cref{theorem-capture} and triangulation that $\theta_k$ converges to a finite random variable on $\mathcal{A}_2$. Hence, by \cref{theorem-convergence-local-holder-general-noise}, $\lbrace \theta_k \rbrace$ converges to a finite random variable that takes value over the stationary points of the objective function on $\mathcal{A}_2$. Or, to be more succinct, we will say that $\lbrace \theta_k \rbrace$ converges to a stationary point on $\mathcal{A}_2$.
\end{remark}

We note that the statement of $\Prb{ \mathcal{A}_1} + \Prb{\mathcal{A}_2} = 1$ is not trivial. There is no \textit{apriori} guarantee that the limit supremum and limit infimum of $\inlinenorm{\theta_k}_2$ must coincide (cf., a simple random walk with a positive one bias at each step, which will have its limit supremum as infinity and limit infimum as zero with probability one). Moreover, in the case that the limit exists, it is also nontrivial that the procedure converges to a stationary point.

One case that is not explored by \cref{theorem-convergence-local-holder-general-noise} is the case of $\mathcal{A}_1$. There are two possibilities here: either we want to allow $\norm{\theta_k}_2$ to diverge as the gradient function can be zero in the limit, or we want to disallow the possibility of $\mathcal{A}_2$ entirely (i.e., $\Prb{ \mathcal{A}_2} = 0$). This first possibility is the focus of \cref{section-global-holder-expected-smoothness}, while the second possibility will not be explored in this work.

\section{Global H\"{o}lder Continuity and Expected Smoothness} \label{section-global-holder-expected-smoothness}
Here, we consider the situation in which the iterates diverging (i.e., event $\mathcal{A}_1$ in \cref{theorem-convergence-local-holder-general-noise}) might be meaningful to the underlying optimization problem. As a simple example of a situation where this may occur, consider optimizing the smooth rectifier function as described in \cref{figure-smooth-rectifier}. In this example, the optimizer would find $\lbrace \theta_k \rbrace$ with $\theta \to -\infty$. 

\begin{figure}[hb]
\centering
\begin{tikzpicture}[scale=0.7]
\begin{axis}[axis lines = left,
    xlabel = $\theta$,
    ylabel = {$F(\theta)$}]
    \addplot[domain=-10:10, black, ultra thick] {ln( 1 + exp(x))};
\end{axis}
\end{tikzpicture}
\caption{A plot of the smooth rectifier function $F(\theta) = \log( 1 + \exp( \theta) )$, where $\theta \in \mathbb{R}$.}
\label{figure-smooth-rectifier}
\end{figure}
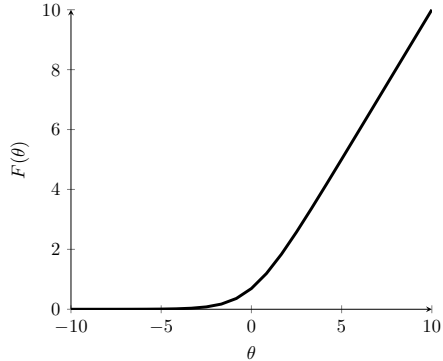

To address this case, we will need to strength \cref{A3a,A4a}. We will begin by strengthening \cref{A3a} with the following assumption, termed \textit{expected smoothness} \citep{khaled2020}.
\begin{assumption} \label{A3b}
There exists $C_1, C_2 \geq 0$ and $C_3 \geq 1$ such that, $\forall \theta \in \mathbb{R}^p$,
\begin{equation}
\E{ \norm{ \dot{f}(\theta,X)}_2^2} \leq C_1 + C_2 ( F(\theta) - \Flb) + C_3 \norm{ \dot F (\theta) }_2^2.
\end{equation}
\end{assumption}
We note that \cref{A3b} is implicitly making use of \cref{A1} owing to the term $\Flb$, but this can be easily addressed by removing $\Flb$ and ensuring that the upper bound is non-negative (e.g., by using $\max \lbrace F(\theta),0 \rbrace$ in place of $F(\theta) - \Flb$). We also note that the requirement $C_3 \geq 1$, in conjunction with \cref{A2}, implies
\begin{equation}
\E{ \norm{ \dot{f}(\theta,X) - \dot F(\theta)}_2^2 } \leq C_1 + C_2 ( F(\theta) - \Flb) + (C_3 - 1) \norm{ \dot F(\theta)}_2^2,
\end{equation}
which allows for the variance to be well-specified (i.e., the upper bound is non-negative).

We now turn our attention to strengthening \cref{A4a} as follows.
\begin{assumption} \label{A4b}
$\dot{F}$ is globally $\alpha$-H\"{o}lder continuous for some $\alpha \in (0,1]$.
\end{assumption}

While \cref{A4b} is more restrictive than \cref{A4a} and precludes certain objective functions, it is rather natural for the case in which $\inlinenorm{\theta_k} \to \infty$ may be meaningful for the optimization problem. Note, by \cref{lemma-grad-bounded-objective}, \cref{A1,A3b,A4b} together imply that $\exists C_4 \geq 0$ such that
\begin{equation}
\E{ \norm{ \dot{f}(\theta,X)}_2^2} \leq C_1 + C_2 ( F(\theta) - \Flb) + C_4 (F(\theta) - \Flb)^{\frac{2\alpha}{1+\alpha}}.
\end{equation}
In the upper bound, we note that the the third term can be absorbed into the first and second term by separating the cases of $F(\theta) - \Flb \leq 1$ and $F(\theta) - \Flb > 1$, which then implies that \cref{A3b} holds with different choices of constants. Thus, even in light of \cref{A1,A4b}, \cref{A3b} is quite general.

Now, under \cref{A1,A2,A3b,A4b}, an SGD procedure satisfying \cref{P1,P2,P3,P4} obeys the following result, which states that the objective function evaluated at the iterates converges to an integrable random variable, and the norm of the gradient function evaluated at the iterates converges to zero with probability one and in $L^1$. As a result, even if the iterates diverge, we see that they are still tending to regions where the gradient is zero. This result generalizes the global analysis results of \citet{bottou2018}, \citet{lei2019}, \citet{patel2020} and \citet{khaled2020}.
\begin{theorem} \label{theorem-global-holder-expected-smoothness}
Let $\lbrace \theta_k \rbrace$ be defined as in \eqref{eqn:iterates} and satisfy \cref{P1,P2,P3,P4}. 
Suppose \cref{A1,A2,A3b,A4b} hold. 
Then, 
\begin{enumerate}[leftmargin=*,itemsep=0em]
\item there exists an integrable random variable, $F_{\lim}$, (i.e., $\E{ F_{\lim} } < \infty$) such that $\lim_{k \to \infty} F(\theta_k) = F_{\lim}$ with probability one.
\item Moreover, $\sup_{k+1 \in \mathbb{N} } \inlineE{ F(\theta_k) } < \infty$. Therefore, $\forall \gamma \in [0,1)$, $$\lim_{k \to \infty} \E{ |(F(\theta_k)-\Flb)^\gamma - (F_{\lim}-\Flb)^\gamma| } = 0.$$
\item Finally, $\lim_{k \to \infty} \inlinenorm{ \dot F (\theta_k)}_2 = 0$ with probability one and $\lim_{k \to \infty} \inlineE{ \inlinenorm{ \dot F(\theta_k)}_2 } = 0$. 
\end{enumerate} 
\end{theorem}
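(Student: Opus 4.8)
The plan is to derive a one-step almost-supermartingale inequality for $F(\theta_k) - \Flb$, feed it into the Robbins--Siegmund convergence theorem, and then bootstrap the resulting summability into the gradient conclusions. First I would record the H\"{o}lder descent inequality implied by \cref{A4b}: for all $\varphi_1,\varphi_2$, $F(\varphi_2) \le F(\varphi_1) + \dot F(\varphi_1)^\top(\varphi_2 - \varphi_1) + \frac{L}{1+\alpha}\norm{\varphi_2-\varphi_1}_2^{1+\alpha}$, which follows by integrating $\dot F$ along the segment and using H\"{o}lder continuity. Substituting the update \eqref{eqn:iterates}, taking $\condE{\cdot}{\mathcal F_k}$, and invoking \cref{A2} turns the linear term into $-\dot F(\theta_k)^\top M_k \dot F(\theta_k)$. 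For the remainder I would bound $\norm{M_k\dot f}_2^{1+\alpha} \le \lambda_{\max}(M_k)^{1+\alpha}\norm{\dot f}_2^{1+\alpha}$ and use the elementary inequality $t^{1+\alpha}\le t^2 + 1$ (valid for $t \ge 0$ since $\alpha \le 1$) together with \cref{A3b} to replace $\condE{\norm{\dot f(\theta_k,X_{k+1})}_2^{1+\alpha}}{\mathcal F_k}$ by $C_1 + 1 + C_2(F(\theta_k)-\Flb) + C_3\norm{\dot F(\theta_k)}_2^2$.

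Next, writing $Y_k = F(\theta_k)-\Flb \ge 0$, this yields
\[
\condE{Y_{k+1}}{\mathcal F_k} \le Y_k - \dot F(\theta_k)^\top M_k \dot F(\theta_k) + \tfrac{LC_3}{1+\alpha}\lambda_{\max}(M_k)^{1+\alpha}\norm{\dot F(\theta_k)}_2^2 + \tfrac{LC_2}{1+\alpha}\lambda_{\max}(M_k)^{1+\alpha} Y_k + \tfrac{L(C_1+1)}{1+\alpha}\lambda_{\max}(M_k)^{1+\alpha}.
\]
The crucial step is to absorb the stray $\norm{\dot F(\theta_k)}_2^2$ term into the genuine descent term: since $\dot F^\top M_k \dot F \ge \lambda_{\min}(M_k)\norm{\dot F}_2^2$, the offending term is at most $\frac{LC_3}{1+\alpha}\,\lambda_{\max}(M_k)^{\alpha}\kappa(M_k)\,\dot F^\top M_k \dot F$, and \cref{P4} forces $\lambda_{\max}(M_k)^{\alpha}\kappa(M_k)\to 0$, so for $k$ beyond some $k_0$ this prefactor is below $1/2$. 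Thus for $k\ge k_0$ the recursion reads $\condE{Y_{k+1}}{\mathcal F_k} \le (1+\beta_k) Y_k - \tfrac12\dot F(\theta_k)^\top M_k \dot F(\theta_k) + \zeta_k$ with $\beta_k,\zeta_k$ both proportional to $\lambda_{\max}(M_k)^{1+\alpha}$, hence summable by \cref{P2}. Robbins--Siegmund then delivers at once that $F(\theta_k)$ converges a.s. to a finite limit $F_{\lim}$ and that $\sum_k \dot F(\theta_k)^\top M_k \dot F(\theta_k) < \infty$ a.s. Taking expectations in the same recursion and using $\prod_k(1+\beta_k)<\infty$ gives $\sup_k\inlineE{Y_k}<\infty$, whence Fatou yields $\inlineE{F_{\lim}}<\infty$ (claim 1); since boundedness in $L^{1/\gamma}$ for exponent $1/\gamma>1$ forces uniform integrability, Vitali's theorem upgrades the a.s. convergence of $(F(\theta_k)-\Flb)^\gamma$ to $L^1$ convergence for $\gamma\in[0,1)$ (claim 2).

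For the third claim, the summable descent quantity gives two things via $\dot F^\top M_k\dot F \ge \lambda_{\min}(M_k)\norm{\dot F}_2^2$ and $\dot F^\top M_k \dot F \ge \norm{M_k \dot F}_2^2/\lambda_{\max}(M_k)$: on one hand $\sum_k \lambda_{\min}(M_k)\norm{\dot F(\theta_k)}_2^2 < \infty$, which with \cref{P3} forces $\liminf_k \norm{\dot F(\theta_k)}_2 = 0$ a.s.; on the other hand $\sum_k \norm{M_k\dot F(\theta_k)}_2^2/\lambda_{\max}(M_k)<\infty$ controls the drift part of the displacement. To upgrade $\liminf$ to $\lim$ I would run an excursion argument: if $\limsup_k\norm{\dot F(\theta_k)}_2 \ge 2\epsilon$ on a positive-probability event, there are infinitely many disjoint index-intervals on which $\norm{\dot F(\theta_k)}_2$ climbs from $\le\epsilon$ to $\ge 2\epsilon$, and global H\"{o}lder continuity (\cref{A4b}) forces the iterate to move by at least $(\epsilon/L)^{1/\alpha}$ across each such interval. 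The noise contribution to the displacement, $\sum_k M_k(\dot f(\theta_k,X_{k+1})-\dot F(\theta_k))$, is a martingale with square-summable increments (since $\sum_k\lambda_{\max}(M_k)^2<\infty$ as $\lambda_{\max}(M_k)\to 0$, and its conditional second moments are bounded using \cref{A3b} and $\sup_k\inlineE{Y_k}<\infty$), so it converges a.s. and its tail oscillation over late intervals is negligible; hence the deterministic drift $\sum_k M_k\dot F(\theta_k)$ must account for the displacement across infinitely many intervals, which I would show contradicts $\sum_k \norm{M_k\dot F(\theta_k)}_2^2/\lambda_{\max}(M_k)<\infty$. Reconciling this displacement bound, which is governed by $\lambda_{\max}$, with the descent budget, which is governed by $\lambda_{\min}$, across arbitrarily long excursions is the main obstacle, and it is again \cref{P4} that ties the two eigenvalue scales together; the argument is adapted from \citet{patel2020}.

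Finally, the a.s. convergence $\norm{\dot F(\theta_k)}_2\to 0$ upgrades to $L^1$ convergence through uniform integrability supplied by \cref{lemma-grad-bounded-objective}: that lemma (from \cref{A1,A4b}) gives $\norm{\dot F(\theta)}_2 \le c\,(F(\theta)-\Flb)^{\alpha/(1+\alpha)}$, so $\norm{\dot F(\theta_k)}_2^{(1+\alpha)/\alpha}\le c'\,Y_k$ is bounded in expectation by $\sup_k\inlineE{Y_k}<\infty$. Boundedness in $L^{(1+\alpha)/\alpha}$ with exponent exceeding $1$ yields uniform integrability of $\{\norm{\dot F(\theta_k)}_2\}$, and Vitali's theorem converts the a.s. limit into $\inlineE{\norm{\dot F(\theta_k)}_2}\to 0$.
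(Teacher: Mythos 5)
Your handling of claims 1 and 2 is correct and essentially the paper's own route: the H\"{o}lder descent recursion (your $t^{1+\alpha}\leq t^2+1$ is a valid variant of the paper's \cref{lemma-variance-control}), absorption of the $C_3\inlinenorm{\dot F(\theta_k)}_2^2$ term via \cref{P4} (the paper's \cref{Lemma4}), Robbins--Siegmund (the paper invokes Neveu's Exercise II.4) for almost sure convergence, the expectation recursion for $\sup_k\inlineE{F(\theta_k)}<\infty$, and uniform integrability plus Vitali for the $L^1$ statements. Likewise, your derivation of $\liminf_k\inlinenorm{\dot F(\theta_k)}_2=0$ (pathwise summability of $\sum_k\dot F(\theta_k)'M_k\dot F(\theta_k)$ plus \cref{P3}) and your $L^1$ upgrade of the gradient claim via \cref{lemma-grad-bounded-objective} are sound and parallel \cref{lemma-liminf-grad-unconstrained,corollary-convergence-expected-grad-unconstrained}.

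The gap is in upgrading $\liminf$ to $\lim$ in claim 3. Your excursion argument must show that the drift displacement $\sum_{k=m}^{n-1}\inlinenorm{M_k\dot F(\theta_k)}_2$ across a late excursion eventually falls below the fixed threshold $(\epsilon/L)^{1/\alpha}/2$, but your summable budgets are $\lambda_{\min}$-weighted ($\sum_k\lambda_{\min}(M_k)\inlinenorm{\dot F(\theta_k)}_2^2<\infty$) or $\lambda_{\max}$-normalized ($\sum_k\inlinenorm{M_k\dot F(\theta_k)}_2^2/\lambda_{\max}(M_k)<\infty$), whereas the drift is $\lambda_{\max}$-weighted. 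Converting one into the other costs either a factor of $\kappa(M_k)$ (after dividing by $\inlinenorm{\dot F(\theta_k)}_2\geq\epsilon$) or a factor of $(\sum_{k=m}^{n-1}\lambda_{\max}(M_k))^{1/2}$ (Cauchy--Schwarz), and neither is bounded under \cref{P1,P2,P3,P4}: for instance $\alpha=1$, $\lambda_{\max}(M_k)=1/k$, $\lambda_{\min}(M_k)=1/(k\log k)$ satisfies all four properties with $\kappa(M_k)=\log k\to\infty$ and $\sum_k\lambda_{\max}(M_k)=\infty$. Writing $d_k=\dot F(\theta_k)'M_k\dot F(\theta_k)$, the only available per-step bound is $\inlinenorm{M_k\dot F(\theta_k)}_2\leq(\lambda_{\max}(M_k)d_k)^{1/2}$, so the drift over an excursion is controlled only by $(\sum_{\mathrm{exc}}\lambda_{\max}(M_k))^{1/2}(\sum_{\mathrm{exc}}d_k)^{1/2}$; since $\sum_k\lambda_{\max}(M_k)$ diverges, this can remain above the fixed threshold over arbitrarily long excursions even as the consumed budget $\sum_{\mathrm{exc}}d_k\to0$, so the finiteness of $\sum_k d_k$ yields no contradiction, and the a.s.\ convergence of the noise martingale does not help because it only removes the noise part of the displacement. \cref{P4} gives $\lambda_{\max}(M_k)^{1+\alpha}=o(\lambda_{\min}(M_k))$, which does not repair this mismatch; the step you yourself flag as ``the main obstacle'' is exactly where the proof breaks, and your assertion that \cref{P4} ties the scales together is not substantiated. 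The paper avoids excursions entirely: its \cref{lemma-limsup-grad-unconstrained} is a one-step Chebyshev/Borel--Cantelli argument showing $\condPrb{\inlinenorm{\dot F(\theta_{k+1})}_2>\delta,\ \inlinenorm{\dot F(\theta_k)}_2\leq\delta,~i.o.}{\mathcal{F}_0}=0$, using that on $\lbrace\inlinenorm{\dot F(\theta_k)}_2\leq\delta\rbrace$ a rise of the gradient norm by $L\epsilon^\alpha$ forces $\inlinenorm{M_k\dot f(\theta_k,X_{k+1})}_2>\epsilon$, an event of probability $O(\inlinenorm{M_k}_2^2)$ by \cref{A3b} and $\sup_k\inlinecondE{F(\theta_k)-\Flb}{\mathcal{F}_0}<\infty$, hence summable by \cref{P2}. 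Combined with your (correct) ``below $\delta$ infinitely often'' statement, this gives $\limsup_k\inlinenorm{\dot F(\theta_k)}_2\leq\delta$ almost surely for every $\delta>0$. Replacing your excursion argument with this single-step crossing argument closes the gap; the rest of your proposal then stands.
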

\begin{proof}
Note, the reference results can be found in \cref{section-analysis-infinite}.
The proofs follow a rather similar strategy to that of \cref{theorem-convergence-local-holder-general-noise} for demonstrating convergence of $\lbrace F(\theta_k) \rbrace$ and $\lbrace \inlinenorm{ \dot F(\theta_k) }_2 \rbrace$ with probability one. The details are supplied in \cref{corollary-convergence-objective-unconstrained,corollary-convergence-grad-unconstrained}, respectively.

The proof for bounding $\sup_{k+1 \in \mathbb{N} } \inlineE{ F(\theta_k) }$ follows by setting up a recursive relationship between sequential objective function values and making use of \cref{P2} to show that they are uniformly bounded. The strategy is adapted from \citet{lei2019}. 
Once we can bound the supremum, we have that $\lbrace (F(\theta_k)-\Flb)^\gamma \rbrace$ are uniformly integrable for $\gamma \in [0,1)$. By combining this observation with strong convergence, we have that $\lbrace (F(\theta_k)-\Flb)^\gamma \rbrace$ converges to $(F_{\lim}-\Flb)^\gamma$ in $L^1$. 
The details are supplied in \cref{corollary-convergence-expected-objective-unconstrained}. 

Once the objective function is controlled, the supremum over the squared norm of the gradient function at the iterates can be controlled using well-known inequalities that make use of \cref{A1}, \cref{A4b} and the fundamental theorem of calculus. As a result, we have that the squared norm of the gradient function at the iterates is uniformly integrable. By combining this with strong convergence, we have that the norm of the gradient function evaluated at the iterates converges to zero in $L^1$. The details are supplied in \cref{corollary-convergence-expected-grad-unconstrained}.
\end{proof}

\begin{remark}
If it is of interest, under the setting of \cref{theorem-global-holder-expected-smoothness}, we can preclude divergence by requiring a radially coercive objective function (i.e., that the objective function tends to infinity as the norm of its argument goes to infinity). To be specific, under this additional requirement, the event $\mathcal{A}_1$ from \cref{theorem-convergence-local-holder-general-noise} coincides with the event on which $(F(\theta_k)-\Flb)^{1/2}$ diverges to infinity; however, by Markov's inequality, the probability that $ (F (\theta_k) - \Flb)^{1/2} $ exceeds some value $\ell$ is bounded by the product of $\ell^{-1}$ and the $\sup_j \inlineE{ ( F(\theta_j) - \Flb )^{1/2} }$, where this final quantity is finite by \cref{theorem-global-holder-expected-smoothness}. Hence, $\inlinePrb{\mathcal{A}_1}$ can be made arbitrarily small, which implies that it is zero.
\end{remark}

\section{Conclusion} \label{section-conclusion}
In this work, we have, to our knowledge, provided results about the global behavior of SGD on the most general nonconvex functions and noise models currently in the literature. In particular, we prove that SGD's iterates either diverge or remain finite, and, in the latter case, we prove that SGD's iterates converge to a stationary point with probability one. Moreover, if we restrict the class of nonconvex functions to those that are globally H\"{o}lder continuous and the noise model to expected smoothness, then we prove that $\dot{F}$ evaluated at SGD's iterates converges to zero with probability one and in $L^1$. With these results, we broaden the scope of problems to which SGD can be applied with rigorous guarantees of its asymptotic behavior.

\subsection*{Limitations}  

A key limitation of this work is that, under the general setting of local H\"{o}lder continuity and generic noise model (i.e, \cref{theorem-convergence-local-holder-general-noise}), we have not provided insight into what happens on the event that the iterates diverge. That is, in \cref{theorem-convergence-local-holder-general-noise}, we do not say anything about $\mathcal{A}_1$ (the event on which the iterates diverge), except that it exists. Ideally, we would like to show that such an even has probability zero, but this is not necessarily true \textit{even for objective functions that are radially coercive, so long as the noise model grows sufficiently rapidly}.

\subsection*{Future Work}

Our primary goal for future work is to address the above limitations. Specifically, our goal is to determine general, reasonable conditions on $F$ that will imply that $\inlinePrb{ \liminf_{k \to \infty} \inlinenorm{ \theta_k }_2 = \infty } = 0$. Once this limitation is address, we will look to integrate current local analyses (i.e., local rates of convergence) with the general class of nonconvex functions and noise models established in this work. 

\pagebreak 
\bibliographystyle{abbrvnat}
{\small

\bibliography{nonconv_sgd}}

\pagebreak 
\appendix
\section{Technical Lemmas}
The first lemma is a straightforward conclusion of \cref{P1,P4}.

\begin{lemma}\label{Lemma4}
Suppose $\lbrace M_k : k +1 \in \mathbb{N} \rbrace$ satisfy \cref{P1,P4}, then $\forall C > 0$,
	$\exists K \in \mathbb{N}$ such that $\forall k \geq K$,
	\begin{align} \label{eqn-lemma-eigenvaluelb}
	\lambda_{\min}(M_k)-\frac{C}{2} \lambda_{\max}(M_k)^{1+\alpha} \geq \frac{1}{2} \lambda_{\min}(M_k).
	\end{align}
\end{lemma}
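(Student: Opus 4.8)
The plan is to reduce the matrix statement to a scalar statement about the ratio $\lambda_{\max}(M_k)^{1+\alpha}/\lambda_{\min}(M_k)$ and then invoke \cref{P4} together with the definition of a limit.

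First, I would rewrite the desired inequality \eqref{eqn-lemma-eigenvaluelb} in an equivalent form. Subtracting $\tfrac{1}{2}\lambda_{\min}(M_k)$ from both sides and multiplying through by $2$, the inequality is equivalent to $\lambda_{\min}(M_k) \geq C\,\lambda_{\max}(M_k)^{1+\alpha}$, and, since $\lambda_{\min}(M_k) > 0$ by \cref{P1}, this is in turn equivalent to
\[
\frac{\lambda_{\max}(M_k)^{1+\alpha}}{\lambda_{\min}(M_k)} \leq \frac{1}{C}.
\]

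Next, I would translate \cref{P4} into this same ratio. Because each $M_k$ is symmetric and positive definite by \cref{P1}, its spectral norm equals its largest eigenvalue and the spectral norm of its inverse equals the reciprocal of its smallest eigenvalue; that is, $\norm{M_k}_2 = \lambda_{\max}(M_k)$ and $\norm{M_k^{-1}}_2 = \lambda_{\min}(M_k)^{-1}$. Hence $\kappa(M_k) = \lambda_{\max}(M_k)/\lambda_{\min}(M_k)$, and the quantity appearing in \cref{P4} becomes
\[
\lambda_{\max}(M_k)^\alpha \kappa(M_k) = \frac{\lambda_{\max}(M_k)^{1+\alpha}}{\lambda_{\min}(M_k)}.
\]
Thus \cref{P4} is precisely the statement that this ratio tends to zero as $k \to \infty$.

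Finally, I would apply the definition of the limit: fixing an arbitrary $C > 0$ and setting $\varepsilon = 1/C > 0$, \cref{P4} guarantees a $K \in \mathbb{N}$ such that for all $k \geq K$ the ratio is at most $\varepsilon = 1/C$, which is exactly the rearranged form of \eqref{eqn-lemma-eigenvaluelb}. I do not anticipate a genuine obstacle here: the only substantive step is recognizing the spectral-norm identities for symmetric positive definite matrices, after which the claim is a direct restatement of \cref{P4} under the $\varepsilon$-definition of convergence.
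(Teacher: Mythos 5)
Your proof is correct and follows essentially the same route as the paper's: rearrange \eqref{eqn-lemma-eigenvaluelb} into the equivalent statement $\lambda_{\max}(M_k)^{\alpha}\kappa(M_k) \leq 1/C$ and then apply \cref{P4} with the $\varepsilon$-definition of the limit. The only difference is that you make explicit the spectral-norm identities ($\norm{M_k}_2 = \lambda_{\max}(M_k)$, $\norm{M_k^{-1}}_2 = \lambda_{\min}(M_k)^{-1}$ under \cref{P1}) that the paper's one-line rearrangement uses implicitly, which is a welcome clarification rather than a deviation.
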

\begin{proof}[Proof of \cref{Lemma4}] Fix $C > 0$. By rearranging \eqref{eqn-lemma-eigenvaluelb}, it is equivalent to prove that $\exists K \in \mathbb{N}$ such that for all $k \geq K$, $1/C \geq  \lambda_{\max}(M_k)^{\alpha} \kappa(M_k)$. This follows directly by \cref{P4}.
\end{proof}

Let $B(R) \subset \mathbb{R}^p$ denote the open ball around zero with radius $R \geq 0$. Let $\overline{ B(R) }$ denote the closure of said ball. The following lemma is a standard consequence of the fundamental theorem of calculus and the continuity assumptions on $\dot{F}$. 

\begin{lemma}\label{lemma-ftc}
Suppose \cref{A4a} holds. Then, for any $R > 0$, $\exists L_{R} > 0$ such that $\forall \theta, \varphi \in \overline{B(R)}$ and $\forall \tilde{L} \geq L_R$,
	\begin{align}
	F(\theta) \leq F(\varphi) + \dot{F}(\varphi)'(\theta-\varphi) + \frac{\tilde{L}}{1+\alpha} \norm{ \theta - \varphi }_2^{1+\alpha}
	\end{align}
Suppose \cref{A4b} holds. Then, $\forall \theta, \varphi \in \mathbb{R}^p$, $\exists L > 0$, $\forall \tilde{L} \geq L$,
\begin{align}
	F(\theta) \leq F(\varphi) + \dot{F}(\varphi)'(\theta-\varphi) + \frac{\tilde{L}}{1+\alpha} \norm{ \theta - \varphi }_2^{1+\alpha}.
	\end{align}
\end{lemma}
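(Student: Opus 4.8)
The plan is to prove both inequalities by the same device: the fundamental theorem of calculus applied to the scalar map $t \mapsto F(\varphi + t(\theta - \varphi))$ along the segment joining $\varphi$ to $\theta$, followed by the local (respectively global) H\"{o}lder bound on $\dot F$. Since \cref{A4a} (respectively \cref{A4b}) forces $\dot F$ to be continuous, $F$ is continuously differentiable, so this scalar map is differentiable with derivative $\dot F(\varphi + t(\theta - \varphi))'(\theta - \varphi)$, and the fundamental theorem of calculus gives
\[
F(\theta) - F(\varphi) = \int_0^1 \dot F(\varphi + t(\theta - \varphi))'(\theta - \varphi)\,dt.
\]

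First I would handle the local statement. Fixing $R > 0$ and $\theta, \varphi \in \overline{B(R)}$, I would use that $\overline{B(R)}$ is convex, so the entire segment $\lbrace \varphi + t(\theta - \varphi) : t \in [0,1] \rbrace$ remains inside the compact set $\overline{B(R)}$; this is exactly what permits invoking \cref{A4a} with a single H\"{o}lder constant $L_R$ valid throughout this set. Subtracting $\dot F(\varphi)'(\theta - \varphi)$ from both sides of the identity above and applying Cauchy--Schwarz, I would bound the integrand by $\inlinenorm{\dot F(\varphi + t(\theta - \varphi)) - \dot F(\varphi)}_2 \, \inlinenorm{\theta - \varphi}_2 \leq L_R \inlinenorm{t(\theta - \varphi)}_2^\alpha \, \inlinenorm{\theta - \varphi}_2$. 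Factoring out $\norm{\theta - \varphi}_2^{1+\alpha}$ and using $\int_0^1 t^\alpha \, dt = 1/(1+\alpha)$ yields the claimed bound with $\tilde L = L_R$. For any $\tilde L \geq L_R$ the right-hand side only increases, since the coefficient multiplies the nonnegative quantity $\norm{\theta - \varphi}_2^{1+\alpha}$, so the inequality persists.

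Next I would treat the global statement, where the argument is identical except that \cref{A4b} supplies a single H\"{o}lder constant $L$ valid on all of $\mathbb{R}^p$. Hence no restriction to a ball and no appeal to convexity or compactness is needed; the same chain of estimates produces the bound with $\tilde L = L$, and it again extends to all $\tilde L \geq L$ by the same monotonicity.

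The main obstacle, such as it is, is purely one of bookkeeping in the local case: I must verify that the segment between two points of $\overline{B(R)}$ stays within the set on which the local H\"{o}lder constant $L_R$ is defined, which is immediate from convexity of the ball, and I must confirm $F \in C^1$ so that the fundamental theorem of calculus applies, which follows since H\"{o}lder continuity of $\dot F$ implies its continuity. Beyond these observations, the remainder is a routine Cauchy--Schwarz-and-integration estimate with no genuine difficulty.
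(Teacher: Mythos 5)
Your proposal is correct and follows essentially the same argument as the paper: the fundamental theorem of calculus along the segment, Cauchy--Schwarz on the integrand, the H\"{o}lder bound on $\dot F$, and the evaluation $\int_0^1 t^\alpha\,dt = 1/(1+\alpha)$. Your explicit remarks on the convexity of $\overline{B(R)}$ and the monotonicity in $\tilde L$ are sound details the paper leaves implicit, but they do not change the route.
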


\begin{proof}
By fundamental theorem of calculus and \cref{A4a}, $\exists L_{R} > 0$ such that $\forall \theta, \varphi \in \overline{B(R)}$,
\begin{align}
F(\theta) &= F(\varphi) + \dot F(\varphi)'(\theta - \varphi) + \int_0^1 \left[ \dot F (\varphi + t(\theta - \varphi) ) - \dot F (\varphi) \right]'(\theta - \varphi) dt \\
         &\leq F(\varphi) + \dot F(\varphi)'(\theta - \varphi) + \int_0^1 \norm{ \dot F (\varphi + t(\theta - \varphi) ) - \dot F (\varphi)}_2 \norm{\theta - \varphi}_2 dt \\
		 &\leq F(\varphi) + \dot F(\varphi)'(\theta - \varphi) + L_{R} \norm{ \theta - \varphi}_2^{1 + \alpha} \int_0^1 t^\alpha dt.
\end{align}
Computing the integral gives the first result. The case for \cref{A4b} is proved nearly identically.
\end{proof}

The following lemma allows us to relate smaller moments of the norm of the stochastic gradients, $\inlinenorm{ \dot{f}(\theta,X)}_2$, to the second moment.

\begin{lemma}\label{lemma-variance-control}
Let $\alpha \in (0,1]$. Let $\mathcal{F}$ be a $\sigma$-algebra. Then, for all $\theta \in \mathbb{R}^p$,
\begin{align}
\condE{ \norm { \dot{f}(\theta,X)}_2^{1+\alpha}}{\mathcal{F}} \leq
\condE{  \norm { \dot{f}(\theta,X)}_2^{2}}{\mathcal{F}}^{\frac{1+\alpha}{2} } 
\leq \left(\frac{1+\alpha}{2}\right)\condE{ \norm { \dot{f}(\theta,X)}_2^2}{\mathcal{F}}+\frac{1-\alpha}{2}
\end{align}
\end{lemma}

\begin{proof}
	If $\alpha=1$, the result holds.
	Suppose $\alpha\in (0,1)$. Then $1+\alpha < 2$ and H\"{o}lder's inequality implies
	\begin{equation}
	\condE{ \norm { \dot{f}(\theta,X)}_2^{1+\alpha}}{\mathcal{F}} \leq \condE{  \norm { \dot{f}(\theta,X)}_2^{2}}{\mathcal{F}}^{\frac{1+\alpha}{2} }.
	\end{equation}
	Now, we recall Young's inequality: $uv=\frac{u^p}{p}+\frac{v^q}{q}$ where $p>1$ and $\frac{1}{p}+\frac{1}{q}=1$. Let $u=\inlinecondE{  \inlinenorm { \dot{f}(\theta,X)}_2^{2}}{\mathcal{F}}^{\frac{1+\alpha}{2} }$, $v=1$, $p = \frac{2}{1+\alpha}$ and $q=\frac{2}{1-\alpha}$. Then, Young's inequality completes the result.
\end{proof}

\section{Analysis of the Local H\"{o}lder Continuity and General Noise Model Case} \label{section-analysis-general}
We will begin with a proof of \cref{theorem-capture}. Then, we split the proof of \cref{theorem-convergence-local-holder-general-noise} into three pieces. The first piece deals with specifying $\mathcal{A}_1$ and $\mathcal{A}_2$, and showing the sum of their probabilities is one. The second piece analyzes the objective function behavior of the iterates on $\mathcal{A}_2$. The third piece analyzes the gradient function behavior of the iterates on $\mathcal{A}_2$. 

\subsection{The Capture Theorem}
Recall that \cref{theorem-capture} states the following. Let $\bar \theta \in \mathbb{R}^p$ be arbitrary. Let $\lbrace \theta_k \rbrace$ be defined as in \eqref{eqn:iterates} and satisfy \cref{P1,P2}. If \cref{A3a} holds, then for any $R \geq 0$, 
\begin{equation}
\Prb{ \norm{ \theta_{k+1} - \bar \theta}_2 > R, ~ \norm{ \theta_k - \bar \theta}_2 \leq R~i.o.} = 0.
\end{equation}

\begin{proof}[Proof of \cref{theorem-capture}]
Fix $ R \geq 0 $ and let $\epsilon > 0$. Then,
\begin{align}
&\Prb{ \norm{ \theta_{k+1} - \bar \theta}_2 \geq R + \epsilon, \norm{ \theta_k - \bar \theta}_2 \leq R } \\
&\Prb{ \norm{ \theta_{k+1} - \bar \theta}_2 \Ind{ \norm{ \theta_k - \bar \theta}_2 \leq R } \geq R + \epsilon } \\
&\quad = \Prb{ \left( \norm{ \theta_{k+1} - \bar \theta}_2 - \norm{ \theta_k - \bar \theta}_2  + \norm{ \theta_k - \bar \theta}_2  \right) \Ind{ \norm{ \theta_k - \bar \theta}_2 \leq R } \geq R + \epsilon } \\
&\quad \leq \Prb{ \left( \norm{ \theta_{k+1} - \bar \theta}_2 - \norm{ \theta_k - \bar \theta}_2  \right) \Ind{ \norm{ \theta_k - \bar \theta}_2 \leq R } + R \geq R + \epsilon } \\
&\quad \leq \Prb{ \norm{ \theta_{k+1} -  \theta_k }_2  \Ind{ \norm{ \theta_k - \bar \theta}_2 \leq R } \geq \epsilon }  \\
&\quad \leq \Prb{ \norm{ M_k \dot f(\theta_k, X_{k+1})}_2 \Ind{ \norm{ \theta_k - \bar \theta}_2 \leq R } \geq \epsilon } \\
&\quad \leq \frac{1}{\epsilon^2} \norm{M_k}_2^2 \E{ \condE{\norm{ \dot f(\theta_k, X_{k+1} )}_2^2 }{ \mathcal{F}_k} \Ind{ \norm{ \theta_k - \bar \theta}_2 \leq R }} \\
&\quad \leq \frac{1}{\epsilon^2} \norm{M_k}_2^2 \E{ G(\theta_k) \Ind{ \norm{ \theta_k - \bar \theta}_2 \leq R } } \\
&\quad \leq \frac{1}{\epsilon^2} \norm{M_k}_2^2 G_R,
\end{align}
where $G_R = \sup_{\theta: \norm{ \theta - \bar{\theta}}_2 \leq R } G(\theta) < \infty$ since $G$ is upper semi-continuous. By \cref{P2}, we see that the sum of the probabilities is finite. Together with the Borel-Cantelli lemma, $\Prb{ \norm{ \theta_{k+1} - \bar \theta}_2 \geq R + \epsilon, ~ \norm{ \theta_k - \bar \theta}_2 \leq R~i.o.} = 0$. Since $\epsilon > 0$ is arbitrary, we can show that this statement holds for a countable sequence of $\epsilon_n \downarrow 0$. As the union of countably many measure zero sets has measure zero, the conclusion of the result holds.
\end{proof}

\subsection{ Global Consequences of the Capture Theorem}

We begin with a direct consequence of \cref{theorem-capture}, which addresses the first component of \cref{theorem-convergence-local-holder-general-noise}.

\begin{corollary} \label{corollary-limits}
Suppose the setting of \cref{theorem-capture} holds. Let $\mathcal{A}_1 = \lbrace \liminf_{k \to \infty} \norm{ \theta_k}_2 = \infty \rbrace$ and $\mathcal{A}_2 = \lbrace \lim_{k \to \infty} \norm{ \theta_k}_2 < \infty \rbrace$. Then, $\Prb{ \mathcal{A}_1} + \Prb{ \mathcal{A}_2} = 1$.
\end{corollary}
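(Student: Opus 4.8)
The plan is to show that the complement of $\mathcal{A}_1 \cup \mathcal{A}_2$ is a null event, after first recording that $\mathcal{A}_1$ and $\mathcal{A}_2$ are disjoint. Writing $L = \liminf_{k \to \infty} \norm{\theta_k}_2$ and $U = \limsup_{k \to \infty} \norm{\theta_k}_2$, the event $\mathcal{A}_1 = \lbrace L = \infty \rbrace$ forces $U = \infty$ and hence $\lim_{k} \norm{\theta_k}_2 = \infty$, whereas $\mathcal{A}_2 = \lbrace L = U < \infty \rbrace$; these are plainly disjoint, so it suffices to prove $\Prb{\mathcal{A}_1 \cup \mathcal{A}_2} = 1$. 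Since $\mathcal{A}_1 \cup \mathcal{A}_2$ is exactly the event on which $\lim_k \norm{\theta_k}_2$ exists in $[0,\infty]$, its complement is the oscillation event $\mathcal{O} = \lbrace L < U, ~ L < \infty \rbrace$, and the whole problem reduces to establishing $\Prb{\mathcal{O}} = 0$.

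Next I would connect oscillation to the single-step level crossings that \cref{theorem-capture} controls, applied with $\bar\theta = 0$. On $\mathcal{O}$ there is a rational $R \in \mathbb{Q}_{\geq 0}$ with $L < R < U$; because $\liminf_k \norm{\theta_k}_2 < R$ we have $\norm{\theta_k}_2 \leq R$ infinitely often, and because $\limsup_k \norm{\theta_k}_2 > R$ we have $\norm{\theta_k}_2 > R$ infinitely often. The key device is a ``last-index'' argument: given any time $m$ with $\norm{\theta_m}_2 \leq R$ followed by a later time $n$ with $\norm{\theta_n}_2 > R$, the largest index $j \in [m,n)$ for which $\norm{\theta_j}_2 \leq R$ satisfies both $\norm{\theta_j}_2 \leq R$ and $\norm{\theta_{j+1}}_2 > R$, producing a genuine one-step crossing. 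Since the two ``infinitely often'' statements furnish infinitely many such excursions, the event $\lbrace \norm{\theta_{k+1}}_2 > R, ~ \norm{\theta_k}_2 \leq R ~\text{i.o.} \rbrace$ must occur. Ranging over all admissible rational levels yields the inclusion
\[
\mathcal{O} \subseteq \bigcup_{R \in \mathbb{Q}_{\geq 0}} \left\lbrace \norm{\theta_{k+1}}_2 > R, ~ \norm{\theta_k}_2 \leq R ~\text{i.o.} \right\rbrace .
\]

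Finally I would invoke \cref{theorem-capture} with $\bar\theta = 0$ for each rational level $R$, which makes every event on the right-hand side a null set; a countable union of null sets is null, so $\Prb{\mathcal{O}} = 0$, hence $\Prb{\mathcal{A}_1 \cup \mathcal{A}_2} = 1$, and $\Prb{\mathcal{A}_1} + \Prb{\mathcal{A}_2} = 1$ by disjointness. The main obstacle is the middle step: \cref{theorem-capture} only rules out infinitely many \emph{one-step} jumps across a fixed radius, while oscillation is an asymptotic statement about $\liminf$ and $\limsup$, so the crux is the combinatorial last-index argument that extracts an honest single-step crossing from each macroscopic excursion and thereby reduces the global oscillation event to the countable family of crossing events that the Capture Theorem annihilates.
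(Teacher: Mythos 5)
Your proposal is correct and takes essentially the same route as the paper: both reduce the oscillation event $\lbrace \liminf_{k} \norm{\theta_k}_2 < \limsup_{k} \norm{\theta_k}_2 \rbrace$ to a countable union over rational radii of single-step crossing events, each of which is annihilated by \cref{theorem-capture}, and then conclude by disjointness of $\mathcal{A}_1$ and $\mathcal{A}_2$. The only difference is presentational: the paper routes through the intermediate events $\mathcal{A}(R) = \lbrace \liminf_{k} \norm{\theta_k}_2 \leq R \rbrace \cap \lbrace \limsup_{k} \norm{\theta_k}_2 > R \rbrace$ and leaves implicit the last-index extraction of an honest one-step crossing, which your write-up makes explicit.
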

\begin{proof}[Proof of \cref{corollary-limits}]
For any $R \geq 0 $, define the event
\begin{equation}
\mathcal{A}(R) = \left\lbrace \liminf_{k \to \infty} \norm{ \theta_k }_2 \leq R \right\rbrace \cap \left\lbrace \limsup_{k \to \infty} \norm{ \theta_k }_2 > R \right\rbrace.
\end{equation}
Then $\left\lbrace \liminf_{k \to \infty} \norm{ \theta_k }_2 < \limsup_{k \to \infty} \norm{ \theta_k }_2 \right\rbrace \subset \bigcup_{ R \in \mathbb{Q}_{\geq 0}} \mathcal{A}(R)$, where $\mathbb{Q}_{ \geq 0}$ is the set of non-negative rational numbers. By \cref{theorem-capture}, $\Prb{ \mathcal{A}(R) } = 0$ for all $R  \geq 0$. Since the countable union of measure zero sets has measure zero, $\Prb{ \liminf_{k \to \infty} \norm{ \theta_k }_2 < \limsup_{k \to \infty} \norm{ \theta_k }_2 } = 0$. Hence, we conclude that, with probability one, $\liminf_{k \to \infty} \norm{ \theta_k }_2$ is either finite and equal to $\limsup_{k \to \infty} \norm{ \theta_k }_2$ (i.e., $\mathcal{A}_2$) or is infinite (i.e., $\mathcal{A}_1$). Since $\mathcal{A}_1$ and $\mathcal{A}_2$ are mutually exclusive, the result follows.
\end{proof}

Importantly, \cref{corollary-limits} says that the possibility that the limit supremum and limit infimum of $\lbrace \norm{\theta_k}_2 \rbrace$ being distinct occurs with probability zero (cf., a simple random walk with a positive one bias at each step, which will have its limit supremum as infinity and limit infimum as zero with probability one). Thus, \cref{corollary-limits} provides us with two cases that we can study: (with probability one) $\lbrace \norm{ \theta _k}_2 \rbrace$ diverges or converges to a finite value. The remaining result explore what happens in the finite case.

\subsection{ Asymptotic Behavior of the Objective Function}

The following result applies \cref{lemma-ftc,lemma-variance-control} under \cref{A1,A2,A3a,A4a} to produce a recursive relationship between the objective function evaluated at two sequential iterates. 

\begin{lemma}\label{lemma-recursion-constrained}
Let $\lbrace M_k \rbrace$ be defined as in \eqref{eqn:iterates} satisfying \cref{P1}.
For all $k + 1 \in \mathbb{N}$ and $R \geq 0$, let $\mathcal{B}_k(R) = \bigcap_{j=0}^k \lbrace \norm{\theta_j}_2 \leq R \rbrace$. 
Suppose \cref{A1,A2,A3a,A4a} hold.
Then, $\forall R \geq 0$, $\exists L_{R+1} > 0$, such that
\begin{equation}
\begin{aligned}
&\condE{ [F(\theta_{k+1}) - \Flb] \Ind{ \mathcal{B}_{k+1}(R) } }{\mathcal{F}_k} \leq [F(\theta_k) - \Flb] \Ind{ \mathcal{B}_{k}(R) } \\
&\quad - \lambda_{\min}(M_k) \norm{ \dot{F}(\theta_k)}_2^2 \Ind{\mathcal{B}_k(R)} + \frac{L_{R+1} + \partial F_R}{1+\alpha} \lambda_{\max}(M_k)^{1+\alpha} G_R^{\frac{1+\alpha}{2}},
\end{aligned}
\end{equation}
where $G_R = \sup_{\theta \in \overline{B(R)}} G(\theta) < \infty$ with $G(\theta)$ defined in \cref{A3a}; and $\partial F_R  = \sup_{ \theta \in \overline{B(R)} } \inlinenorm{ \dot{F}(\theta)}_2 (1+\alpha) < \infty$.
\end{lemma}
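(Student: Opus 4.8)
The plan is to derive the recursion from the local descent inequality of \cref{lemma-ftc}, using unbiasedness (\cref{A2}) to extract the favorable $-\lambda_{\min}(M_k)\norm{\dot F(\theta_k)}_2^2$ term and \cref{lemma-variance-control} together with \cref{A3a} to control the higher-order remainder. First I would record the bookkeeping: $\Ind{\mathcal B_{k+1}(R)}=\Ind{\mathcal B_k(R)}\Ind{\norm{\theta_{k+1}}_2\le R}$, the factor $\Ind{\mathcal B_k(R)}$ is $\mathcal F_k$-measurable, and on $\mathcal B_k(R)$ we have $\theta_k\in\overline{B(R)}$, so that $\norm{\dot F(\theta_k)}_2\le\partial F_R/(1+\alpha)$ and $\condE{\norm{\dot f(\theta_k,X_{k+1})}_2^2}{\mathcal F_k}\le G_R<\infty$ by upper semi-continuity of $G$. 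Throughout I carry $\Ind{\mathcal B_k(R)}$ and work conditionally on $\mathcal F_k$.

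The central step is that on the support of $\Ind{\norm{\theta_{k+1}}_2\le R}$ both $\theta_k$ and $\theta_{k+1}$ lie in $\overline{B(R)}\subset\overline{B(R+1)}$, so \cref{lemma-ftc} applied on $\overline{B(R+1)}$ (hence the constant $L_{R+1}$) gives
\begin{equation*}
[F(\theta_{k+1})-\Flb]\Ind{\norm{\theta_{k+1}}_2\le R}\le\left([F(\theta_k)-\Flb]+\dot F(\theta_k)'(\theta_{k+1}-\theta_k)+\tfrac{L_{R+1}}{1+\alpha}\norm{\theta_{k+1}-\theta_k}_2^{1+\alpha}\right)\Ind{\norm{\theta_{k+1}}_2\le R}.
\end{equation*}
Taking $\condE{\cdot}{\mathcal F_k}$, the $[F(\theta_k)-\Flb]$ term produces $[F(\theta_k)-\Flb]\condPrb{\norm{\theta_{k+1}}_2\le R}{\mathcal F_k}$; the Hölder term, after bounding the indicator by one, is controlled using $\norm{\theta_{k+1}-\theta_k}_2\le\lambda_{\max}(M_k)\norm{\dot f(\theta_k,X_{k+1})}_2$ and \cref{lemma-variance-control,A3a}. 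For the cross term I would split $\Ind{\norm{\theta_{k+1}}_2\le R}=1-\Ind{\norm{\theta_{k+1}}_2>R}$: the unconstrained part yields exactly $\condE{\dot F(\theta_k)'(\theta_{k+1}-\theta_k)}{\mathcal F_k}=-\dot F(\theta_k)'M_k\dot F(\theta_k)\le-\lambda_{\min}(M_k)\norm{\dot F(\theta_k)}_2^2$ by \cref{A2,P1}.

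The crux, and the main obstacle, is the correction $\condE{\dot F(\theta_k)'M_k\dot f(\theta_k,X_{k+1})\Ind{\norm{\theta_{k+1}}_2>R}}{\mathcal F_k}$: the indicator is correlated with $X_{k+1}$, and since $\theta_k$ may sit arbitrarily close to the boundary of $\overline{B(R)}$, the event $\{\norm{\theta_{k+1}}_2>R\}$ can be triggered by an arbitrarily small step, so one cannot naively upgrade the linear factor $\norm{\theta_{k+1}-\theta_k}_2$ to the $(1+\alpha)$-power needed for the noise bound. I would resolve this by splitting on the step size $s:=\norm{\theta_{k+1}-\theta_k}_2$. On $\{s>1\}$ the bound $s\le s^{1+\alpha}$ restores the power, and the term is dominated by $\tfrac{\partial F_R}{1+\alpha}\lambda_{\max}(M_k)^{1+\alpha}G_R^{(1+\alpha)/2}$ via Cauchy--Schwarz and \cref{lemma-variance-control,A3a}. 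On $\{s\le1\}$ the point $\theta_{k+1}$ still lies in $\overline{B(R+1)}$, so rearranging \cref{lemma-ftc} gives $\dot F(\theta_k)'M_k\dot f(\theta_k,X_{k+1})=-\dot F(\theta_k)'(\theta_{k+1}-\theta_k)\le[F(\theta_k)-\Flb]+\tfrac{L_{R+1}}{1+\alpha}s^{1+\alpha}$, where $F(\theta_{k+1})-\Flb\ge0$ by \cref{A1}.

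Finally I would collect terms. The contribution $[F(\theta_k)-\Flb]\condPrb{\norm{\theta_{k+1}}_2>R}{\mathcal F_k}$ from the small-step case combines with the earlier $[F(\theta_k)-\Flb]\condPrb{\norm{\theta_{k+1}}_2\le R}{\mathcal F_k}$ to reconstitute the full $[F(\theta_k)-\Flb]$, while the Hölder pieces collapse (their indicators summing to at most one) into a single $\tfrac{L_{R+1}}{1+\alpha}\lambda_{\max}(M_k)^{1+\alpha}G_R^{(1+\alpha)/2}$; adding the $\partial F_R$ contribution and multiplying by $\Ind{\mathcal B_k(R)}$ gives the stated inequality. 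I expect the delicate part to be exactly this accounting, which must make the noise constant collapse to precisely $L_{R+1}+\partial F_R$ rather than a larger multiple; the enabling device is the $R+1$ buffer, which guarantees $\theta_{k+1}\in\overline{B(R+1)}$ in precisely the small-step boundary-crossing regime where the naive estimate fails and lets the slack $[F(\theta_k)-\Flb]\condPrb{\norm{\theta_{k+1}}_2>R}{\mathcal F_k}$ absorb the correction.
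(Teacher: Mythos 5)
Your proof is correct and arrives at exactly the paper's constants, but it organizes the key difficulty differently. The paper never splits the cross term: it writes $\Ind{\mathcal{B}_{k+1}(R)} = \left(\Ind{\mathcal{B}_{k+1}(R)} - \Ind{\mathcal{B}_k(R)}\right) + \Ind{\mathcal{B}_k(R)}$ against the entire descent expression from \cref{lemma-ftc}, observes that the difference term can contribute only when that expression is negative, and then shows---via \cref{lemma-ftc} on $\overline{B(R+1)}$ and \cref{A1}---that negativity on $\mathcal{B}_k(R)$ forces $\norm{\theta_{k+1}}_2 > R+1$ and hence $\norm{M_k \dot f(\theta_k,X_{k+1})}_2 \geq 1$; in other words, the small-step crossing regime yields a non-positive term that is simply dropped, and only the large-step regime survives, handled by Cauchy--Schwarz and the upgrade $s \leq s^{1+\alpha}$ valid for $s \geq 1$. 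You instead split the cross term via $\Ind{\norm{\theta_{k+1}}_2 \leq R} = 1 - \Ind{\norm{\theta_{k+1}}_2 > R}$ and partition the correction by step size: your large-step regime uses the same Cauchy--Schwarz/power-upgrade device, but your small-step crossing regime is \emph{bounded} (rather than shown to be sign-benign) using \cref{lemma-ftc} and \cref{A1}, and then cancelled against the main terms, with the probabilities $\condPrb{\norm{\theta_{k+1}}_2 \leq R}{\mathcal{F}_k}$ and $\condPrb{\norm{\theta_{k+1}}_2 > R,\, s\leq 1}{\mathcal{F}_k}$ summing to at most one and the disjoint indicators on the two H\"{o}lder pieces doing likewise. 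Both arguments rest on the same two facts (the $R+1$ buffer activating \cref{lemma-ftc} plus \cref{A1} precisely in the small-step crossing regime, and the power upgrade for steps of size at least one), so neither is more general; the paper's sign argument is leaner, while your absorption accounting makes the origin of each constant more explicit. One small internal inconsistency: early on you propose to handle the main H\"{o}lder term ``after bounding the indicator by one,'' which together with the small-step H\"{o}lder piece would produce $2L_{R+1}$; your final accounting, which keeps the disjoint indicators, correctly avoids this, and in any case even the looser constant would satisfy the lemma since $L_{R+1}$ is only required to exist.
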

\begin{proof}
Fix $R \geq 0$. For any $k +1 \in \mathbb{N}$, \cref{lemma-ftc} implies that $\exists L_{R+1} > 0$ such that
\begin{equation}
\begin{aligned}
&[F(\theta_{k+1}) - \Flb] \Ind{ \mathcal{B}_{k+1}(R+1) } \\
&\quad \leq \left( [F(\theta_k) - \Flb] + \dot{F}(\theta_k)'(\theta_{k+1} - \theta_k) + \frac{L_{R+1}}{1+\alpha} \norm{ \theta_{k+1} - \theta_k }_2^{1+\alpha} \right) \Ind{ \mathcal{B}_{k+1}(R+1) }.
\end{aligned}
\end{equation}

Now, since $\overline{B(R)} \subset \overline{B(R+1)}$, it also holds true that
\begin{equation}
\begin{aligned}
&[F(\theta_{k+1}) - \Flb] \Ind{ \mathcal{B}_{k+1}(R) } \\
&\quad \leq \left( [F(\theta_k) - \Flb] + \dot{F}(\theta_k)'(\theta_{k+1} - \theta_k) + \frac{L_{R+1}}{1+\alpha} \norm{ \theta_{k+1} - \theta_k }_2^{1+\alpha} \right) \Ind{ \mathcal{B}_{k+1}(R) }.
\end{aligned}
\end{equation}

Our goal now is to replace $\mathcal{B}_{k+1}(R)$ on the right hand side by $\mathcal{B}_{k}(R)$. However, there is a technical difficulty which we must address. First, it follows from the preceding inequality that
\begin{equation} \label{ineq-30sx4}
\begin{aligned}
&[F(\theta_{k+1}) - \Flb] \Ind{ \mathcal{B}_{k+1}(R) } \\
& \leq \left( [F(\theta_k) - \Flb] + \dot{F}(\theta_k)'(\theta_{k+1} - \theta_k) + \frac{L_{R+1}}{1+\alpha} \norm{ \theta_{k+1} - \theta_k }_2^{1+\alpha} \right) \\
&\quad \times \bigg{(} \Ind{ \mathcal{B}_{k+1}(R) } - \Ind{\mathcal{B}_k(R)} \bigg{)} \\
&\quad + \left( [F(\theta_k) - \Flb] + \dot{F}(\theta_k)'(\theta_{k+1} - \theta_k) + \frac{L_{R+1}}{1+\alpha} \norm{ \theta_{k+1} - \theta_k }_2^{1+\alpha} \right) \Ind{ \mathcal{B}_{k}(R) }.
\end{aligned}
\end{equation}

The first term on the right hand side of the inequality only contributes meaningfully if it is positive. Since $\Ind{ \mathcal{B}_{k}(R) } \geq \Ind{ \mathcal{B}_{k+1}(R) }$, then two statements hold: (i) $\Ind{\mathcal{B}_k(R)}\Ind{\mathcal{B}_{k+1}(R)} = \Ind{\mathcal{B}_{k+1}(R)}$; and (ii) the first term of the right hand side of \eqref{ineq-30sx4} is positive if and only if 
\begin{equation} \label{event-d49fa}
\left([F(\theta_k) - \Flb] + \dot{F}(\theta_k)'(\theta_{k+1} - \theta_k) + \frac{L_{R+1}}{1+\alpha} \norm{ \theta_{k+1} - \theta_k }_2^{1+\alpha}\right) \Ind{\mathcal{B}_k(R)} < 0.
\end{equation}
By the choice of $L_{R+1}$, \cref{A1} and \cref{lemma-ftc} imply that if \eqref{event-d49fa} occurs, then $\inlinenorm{\theta_{k+1}}_2 > R + 1 \geq \inlinenorm{\theta_k}_2 + 1$. By the reverse triangle inequality and \eqref{eqn:iterates}, if \eqref{event-d49fa} occurs, then $\inlinenorm{ M_k \dot{f}(\theta_k, X_{k+1})}_2 \geq 1$. Hence,
\begin{equation}
\begin{aligned}
&\left( [F(\theta_k) - \Flb] + \dot{F}(\theta_k)'(\theta_{k+1} - \theta_k) + \frac{L_{R+1}}{1+\alpha} \norm{ \theta_{k+1} - \theta_k }_2^{1+\alpha} \right) \\
&\quad \times \bigg{(} \Ind{ \mathcal{B}_{k+1}(R) } - \Ind{\mathcal{B}_k(R)} \bigg{)} \\
&\leq \left( -[F(\theta_k) - \Flb] - \dot{F}(\theta_k)'(\theta_{k+1} - \theta_k) - \frac{L_{R+1}}{1+\alpha} \norm{ \theta_{k+1} - \theta_k }_2^{1+\alpha} \right) \\
&\quad \times \bigg{(} \Ind{ \mathcal{B}_{k}(R) } - \Ind{\mathcal{B}_{k+1}(R)} \bigg{)} \Ind{ \mathcal{B}_k(R)} \Ind{ \norm{M_k \dot f (\theta_k, X_{k+1} )}_2 \geq 1 }.
\end{aligned}
\end{equation}

We now compute another coarse upper bound for this inequality. Note, by \cref{A1} and Cauchy-Schwarz, 
\begin{align}
&\begin{aligned}
&\left( -[F(\theta_k) - \Flb] - \dot{F}(\theta_k)'(\theta_{k+1} - \theta_k) - \frac{L_{R+1}}{1+\alpha} \norm{ \theta_{k+1} - \theta_k }_2^{1+\alpha} \right) \\
&\quad \times \bigg{(} \Ind{ \mathcal{B}_{k}(R) } - \Ind{\mathcal{B}_{k+1}(R)} \bigg{)} \Ind{ \mathcal{B}_k(R)} \Ind{ \norm{M_k \dot f (\theta_k, X_{k+1} )} _2 \geq 1 }
\end{aligned} \\
&\quad \leq \norm{ \dot F(\theta_k) }_2 \norm{ M_k \dot f (\theta_k, X_{k+1} ) }_2 \Ind{ \mathcal{B}_k(R)} \Ind{ \norm{M_k \dot f (\theta_k, X_{k+1} )}_2 \geq 1 } \\
&\quad \leq \norm{ \dot F(\theta_k) }_2 \norm{ M_k \dot f (\theta_k, X_{k+1} ) }_2^{1+\alpha}\Ind{ \mathcal{B}_k(R)} \\
&\quad \leq \frac{\partial F_R}{1+\alpha}  \norm{ M_k \dot f (\theta_k, X_{k+1} ) }_2^{1+\alpha}\Ind{ \mathcal{B}_k(R)},
\end{align}
where $\partial F_R = \sup_{ \theta \in \overline{B(R)} } \inlinenorm{ \dot{F}(\theta)}_2 (1 + \alpha) < \infty$ given that $\inlinenorm{\dot{F}(\theta)}_2$ is a continuous function of $\theta$.

Applying this inequality to \eqref{ineq-30sx4}, we conclude
\begin{equation}
\begin{aligned}
&[F(\theta_{k+1}) - \Flb] \Ind{ \mathcal{B}_{k+1}(R) } \\
& \leq \left( [F(\theta_k) - \Flb] - \dot{F}(\theta_k)'M_k \dot f(\theta_k, X_{k+1}) + \frac{L_{R+1} + \partial F_R}{1+\alpha} \norm{ M_k \dot f(\theta_k, X_{k+1}) }_2^{1+\alpha} \right) \\
& \quad \times \Ind{ \mathcal{B}_{k}(R) }.
\end{aligned}
\end{equation}

By \cref{A2},
\begin{equation}
\begin{aligned}
&\condE{[F(\theta_{k+1}) - \Flb] \Ind{ \mathcal{B}_{k+1}(R) }}{\mathcal{F}_k} \\
&\leq \left( [F(\theta_k) - \Flb] - \dot{F}(\theta_k)'M_k \dot{F}(\theta_k) + \frac{L_{R+1} + \partial F_R}{1+\alpha}\condE{ \norm{ M_k \dot{f}(\theta_k,X_{k+1}) }_2^{1+\alpha} }{\mathcal{F}_k} \right)\\
&\quad \times \Ind{ \mathcal{B}_{k}(R) }.
\end{aligned}
\end{equation}

Using \cref{P1}, \cref{A3a} and \cref{lemma-variance-control},
\begin{equation}
\begin{aligned}
&\condE{[F(\theta_{k+1}) - \Flb] \Ind{ \mathcal{B}_{k+1}(R) }}{\mathcal{F}_k} \\
&\leq \left( [F(\theta_k) - \Flb] - \lambda_{\min}(M_k)\norm{\dot{F}(\theta_k)}_2^2 + \frac{L_{R+1} + \partial F_R}{1+\alpha} \lambda_{\max}(M_k)^{1+\alpha} G(\theta_k)^\frac{1+\alpha}{2}  \right) \Ind{ \mathcal{B}_{k}(R) }.
\end{aligned}
\end{equation}

By \cref{A3a}, $G$ is upper semicontinuous and $\overline{B(R)}$ is compact, which implies that $G_R$ is well defined and finite.
The result follows.
\end{proof}

The following corollary to \cref{lemma-recursion-constrained} proves the convergence of the objective function component for \cref{theorem-convergence-local-holder-general-noise}.
\begin{corollary} \label{corollary-convergence-objective-constrained}
Let $\lbrace \theta_k \rbrace$ be defined as in \eqref{eqn:iterates} satisfying \cref{P1,P2}. 
Suppose \cref{A1,A2,A3a,A4a} hold.
Then, there exists a finite random variable $F_{\lim}$ such that on the event $\lbrace \sup_{k} \norm{\theta_k}_2 < \infty \rbrace$, $\lim_{k \to \infty} F(\theta_k) = F_{\lim}$ with probability one.
\end{corollary}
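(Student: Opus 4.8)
The plan is to treat the recursion established in \cref{lemma-recursion-constrained} as a nonnegative almost-supermartingale and apply a supermartingale convergence argument (in the spirit of the Robbins--Siegmund theorem) for each fixed radius $R$, then stitch the radii together. Fix $R \geq 0$ and set $Y_k = [F(\theta_k) - \Flb]\Ind{\mathcal{B}_k(R)}$, which is nonnegative by \cref{A1} and $\mathcal{F}_k$-measurable. \cref{lemma-recursion-constrained} gives $\condE{Y_{k+1}}{\mathcal{F}_k} \leq Y_k - \zeta_k + \xi_k$, where $\zeta_k = \lambda_{\min}(M_k)\norm{\dot F(\theta_k)}_2^2 \Ind{\mathcal{B}_k(R)} \geq 0$ and $\xi_k = \frac{L_{R+1} + \partial F_R}{1+\alpha}\lambda_{\max}(M_k)^{1+\alpha} G_R^{\frac{1+\alpha}{2}}$ is deterministic and summable by \cref{P2}.

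First I would convert this into an honest supermartingale by absorbing the summable forcing term into the state. Define $Z_k = Y_k + \sum_{j=k}^\infty \xi_j$; the tail sum is finite by \cref{P2} and decreases to zero, so $Z_k$ is nonnegative and satisfies $\condE{Z_{k+1}}{\mathcal{F}_k} \leq Z_k - \zeta_k \leq Z_k$. Hence $\lbrace Z_k \rbrace$ is a nonnegative supermartingale and converges almost surely to a finite limit by the martingale convergence theorem; since the tail sum vanishes, $Y_k$ converges almost surely to the same finite random variable, call it $F_{\lim}^{(R)}$. Next I would extract the conclusion on the correct event: on $\lbrace \sup_k \norm{\theta_k}_2 \leq R \rbrace$ the indicators satisfy $\Ind{\mathcal{B}_k(R)} = 1$ for every $k$, so there $Y_k = F(\theta_k) - \Flb$, whence $\lim_k F(\theta_k) = \Flb + F_{\lim}^{(R)}$ exists and is finite with probability one. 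Writing $\lbrace \sup_k \norm{\theta_k}_2 < \infty \rbrace = \bigcup_{R \in \mathbb{N}} \lbrace \sup_k \norm{\theta_k}_2 \leq R \rbrace$ as a countable union, and noting that on each member the value $\lim_k F(\theta_k)$ does not depend on $R$, I would define $F_{\lim} := \lim_k F(\theta_k)$ wherever this limit exists (and, say, $F_{\lim} := 0$ elsewhere). Discarding the countably many null sets on which some $Y_k$ fails to converge, $F_{\lim}$ is finite and $\lim_k F(\theta_k) = F_{\lim}$ holds with probability one on $\lbrace \sup_k \norm{\theta_k}_2 < \infty \rbrace$.

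The main obstacle is not the convergence itself---the recursion has already done the analytic work---but the careful bookkeeping with the truncation events $\mathcal{B}_k(R)$. The supermartingale argument only delivers convergence of the \emph{truncated} process $Y_k$, so I must verify that the truncation is inactive precisely on $\lbrace \sup_k \norm{\theta_k}_2 \leq R \rbrace$ and that the limits are consistent as $R$ increases, so that a single $F_{\lim}$ is well defined on the whole event $\lbrace \sup_k \norm{\theta_k}_2 < \infty \rbrace$. The nonnegativity supplied by \cref{A1} and the summability supplied by \cref{P2} are exactly the two ingredients that make the nonnegative supermartingale convergence theorem applicable here.
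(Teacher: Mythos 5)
Your proposal is correct and takes essentially the same route as the paper: both start from \cref{lemma-recursion-constrained}, deduce almost-sure convergence of the truncated process $[F(\theta_k)-\Flb]\Ind{\mathcal{B}_k(R)}$ for each fixed $R$, and then stitch the events $\lbrace \sup_k \norm{\theta_k}_2 \leq R \rbrace$ together over $R \in \mathbb{N}$. The only difference is that where the paper cites a ready-made almost-supermartingale convergence result (Neveu, Exercise II.4; cf.\ Robbins--Siegmund), you prove the needed special case inline by absorbing the deterministic, summable forcing term into the state and applying the nonnegative supermartingale convergence theorem, which is a legitimate self-contained substitute.
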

\begin{proof}
By \cref{lemma-recursion-constrained}, for every $R \geq 0$,
\begin{equation}
\begin{aligned}
&\condE{ [F(\theta_{k+1}) - \Flb] \Ind{ \mathcal{B}_{k+1}(R) } }{\mathcal{F}_k} \\
&\quad \leq [F(\theta_k) - \Flb] \Ind{ \mathcal{B}_{k}(R) }
+ \frac{(L_{R+1} + \partial F_R) G_R^{\frac{1+\alpha}{2}}}{1+\alpha} \lambda_{\max}(M_k)^{1+\alpha}.
\end{aligned}
\end{equation}
By \citet[Exercise II.4]{neveu1975} (cf. \citet*{robbins1971}) and \cref{P2}, $\lim_{k \to \infty} [ F(\theta_k) - \Flb] \Ind{ \mathcal{B}_{k}(R)}$ converges to a finite random variable with probability one. Since $\Flb$ is a constant and $R \geq 0$ is arbitrary, we conclude that there exists a finite random variable $F_{\lim}$ such that $ \lbrace \sup_{k} \norm{\theta_k}_2 \leq R \rbrace \subset \lbrace \lim_{k} F(\theta_k) = F_{\lim} \rbrace$ up to a measure zero set. Since the countable union of measure zero sets has measure zero,
\begin{equation}
\left\lbrace \sup_k \norm{\theta_k}_2 < \infty \right\rbrace = \bigcup_{ R \in \mathbb{N} } \left\lbrace \sup_k \norm{ \theta_k}_2 \leq R \right\rbrace \subset \left\lbrace \lim_{k \to \infty} F(\theta_k) = F_{\lim} \right\rbrace,
\end{equation}
up to a measure zero set. The result follows.
\end{proof}

\subsection{Asymptotic Behavior of the Gradient}
We now prove that the gradient norm evaluated at SGD's iterates must, repeatedly, get arbitrarily close to zero. We adapt the strategy of \citet{patel2020}.
\begin{lemma} \label{lemma-liminf-grad-constrained}
Let $\lbrace \theta_k \rbrace$ be defined as in \eqref{eqn:iterates} satisfying \cref{P1,P2,P3}. 
For all $k + 1 \in \mathbb{N}$ and $R \geq 0$, let $\mathcal{B}_k(R) = \bigcap_{j=0}^\infty \lbrace \norm{\theta_k}_2 \leq R \rbrace$.
Suppose \cref{A1,A2,A3a,A4a} hold. Then, $\forall R \geq 0$ and for all $\delta > 0$,
\begin{equation}
\condPrb{ \norm{ \dot{F}(\theta_k)}_2^2 \Ind{\mathcal{B}_k(R)} \leq \delta, ~i.o. }{\mathcal{F}_0} = 1,~ w.p.1.
\end{equation}
\end{lemma}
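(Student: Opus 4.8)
The plan is to leverage the recursion established in \cref{lemma-recursion-constrained} together with a supermartingale convergence argument to extract a summable ``descent'' quantity, and then to derive a contradiction from \cref{P3}. Fix $R \geq 0$ and $\delta > 0$, and abbreviate $X_k := \inlinenorm{\dot{F}(\theta_k)}_2^2 \Ind{\mathcal{B}_k(R)}$ and $Y_k := [F(\theta_k) - \Flb]\Ind{\mathcal{B}_k(R)}$. By \cref{A1}, $Y_k \geq 0$, and the recursion of \cref{lemma-recursion-constrained} reads
\begin{equation}
\condE{Y_{k+1}}{\mathcal{F}_k} \leq Y_k - \lambda_{\min}(M_k) X_k + C_R \lambda_{\max}(M_k)^{1+\alpha},
\end{equation}
where $C_R := (L_{R+1} + \partial F_R) G_R^{(1+\alpha)/2}/(1+\alpha) < \infty$ is a constant depending only on $R$.

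First I would recognize this as an instance of the Robbins--Siegmund supermartingale inequality (the same result of \citet{neveu1975} invoked in \cref{corollary-convergence-objective-constrained}), with nonnegative iterates $Y_k$, additive term $C_R \lambda_{\max}(M_k)^{1+\alpha}$, and subtractive term $\lambda_{\min}(M_k) X_k \geq 0$. Because \cref{P2} guarantees $\sum_k C_R \lambda_{\max}(M_k)^{1+\alpha} < \infty$, the theorem yields not only the almost-sure convergence of $Y_k$ (already used earlier) but, crucially, the additional conclusion that
\begin{equation}
\sum_{k=1}^\infty \lambda_{\min}(M_k) X_k < \infty \quad \text{with probability one.}
\end{equation}
This summability is the key quantitative output of the recursion and is the step I expect to carry the argument.

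Next I would argue by contradiction. It suffices to show $\inlinePrb{X_k \leq \delta~i.o.} = 1$, equivalently $\inlinePrb{X_k > \delta~\text{eventually}} = 0$; the conditional-on-$\mathcal{F}_0$ form then follows since a probability-one event is conditionally probability-one almost surely (as $\inlinePrb{E} = \inlineE{\condPrb{E}{\mathcal{F}_0}}$ forces $\condPrb{E}{\mathcal{F}_0}=1$ a.s. when $\inlinePrb{E}=1$). Writing $\lbrace X_k > \delta~\text{eventually} \rbrace = \bigcup_{N} \lbrace X_k > \delta~\forall k \geq N \rbrace$, I would observe that on $\lbrace X_k > \delta~\forall k \geq N \rbrace$,
\begin{equation}
\sum_{k \geq N} \lambda_{\min}(M_k) X_k \geq \delta \sum_{k \geq N} \lambda_{\min}(M_k) = \infty
\end{equation}
by \cref{P3}. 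This directly contradicts the almost-sure finiteness just established, so each event $\lbrace X_k > \delta~\forall k \geq N \rbrace$ has probability zero, and countable subadditivity over $N \in \mathbb{N}$ finishes the argument.

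The genuinely delicate point is the extraction of the summability: one must confirm that the recursion of \cref{lemma-recursion-constrained} fits the Robbins--Siegmund hypotheses exactly---in particular that the process is the \emph{nonnegative} $Y_k$ (not $F(\theta_k)$ itself), that the decreasing indicators $\Ind{\mathcal{B}_{k+1}(R)} \leq \Ind{\mathcal{B}_k(R)}$ keep everything adapted and nonnegative, and that $\lambda_{\min}(M_k) X_k$ plays the role of the subtractive term whose sum the theorem controls. Once this is in place, the remaining contradiction via \cref{P3} is routine, and the conditioning on $\mathcal{F}_0$ requires no extra work beyond the observation above, accommodating a random initialization $\theta_0$.
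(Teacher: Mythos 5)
Your proof is correct, but it runs on a genuinely different mechanism than the paper's. The paper never invokes a supermartingale theorem in this lemma: it takes conditional expectations given $\mathcal{F}_0$ of the recursion in \cref{lemma-recursion-constrained}, telescopes the sum from $k=0$ to $j$, and uses \cref{A1} and \cref{P2} to bound the right-hand side, obtaining only that $\sum_{k} \lambda_{\min}(M_k)\, \condE{ \inlinenorm{\dot F(\theta_k)}_2^2 \Ind{\mathcal{B}_k(R)} }{\mathcal{F}_0} < \infty$ almost surely; then \cref{P3} forces $\liminf_k \condE{ \inlinenorm{\dot F(\theta_k)}_2^2 \Ind{\mathcal{B}_k(R)} }{\mathcal{F}_0} = 0$, and because summability holds only at the level of conditional expectations, the paper must pass through Markov's inequality applied to the tail intersections $\bigcap_{k \geq j} \lbrace \inlinenorm{\dot F(\theta_k)}_2^2 \Ind{\mathcal{B}_k(R)} > \delta \rbrace$ and a countable union to reach the i.o.\ statement. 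You instead extract the \emph{pathwise} summability $\sum_k \lambda_{\min}(M_k) X_k < \infty$ a.s.\ from the second conclusion of the Robbins--Siegmund theorem, after which \cref{P3} gives a direct pathwise contradiction on $\lbrace X_k > \delta \ \forall k \geq N \rbrace$, with no Markov step needed; your lift from the unconditional to the conditional statement (a probability-one event has conditional probability one a.s.) is also sound. The trade-off: your route is shorter and yields the stronger intermediate fact $\liminf_k X_k = 0$ a.s.\ pathwise, but it leans on the full strength of \citet{robbins1971} (almost-supermartingale convergence \emph{plus} summability of the subtractive term), so you should verify that the result as cited---\citet[Exercise II.4]{neveu1975}---actually states the summability conclusion and not merely convergence; the paper's argument is more elementary in that it needs nothing beyond telescoping expectations, Markov's inequality, and countable subadditivity.
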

\begin{proof}
By \cref{lemma-recursion-constrained},
\begin{equation}
\begin{aligned}
&\lambda_{\min}(M_k) \E{\norm{ \dot{F}(\theta_k)}_2^2 \Ind{\mathcal{B}_k(R)}} \leq \E{[F(\theta_k) - \Flb] \Ind{ \mathcal{B}_{k}(R) }} \\
&\quad - \E{ [F(\theta_{k+1}) - \Flb] \Ind{ \mathcal{B}_{k+1}(R) } } + \frac{(L_{R+1} + \partial F_R) G_R^{\frac{1+\alpha}{2}}}{1+\alpha} \lambda_{\max}(M_k)^{1+\alpha}.
\end{aligned}
\end{equation}
Taking the sum of this equation for all $k$ from $0$ to $j \in \mathbb{N}$, we have
\begin{equation}
\begin{aligned}
& \sum_{k=0}^j \lambda_{\min}(M_k) \condE{\norm{ \dot{F}(\theta_k)}_2^2 \Ind{\mathcal{B}_k(R)}}{\mathcal{F}_0} \leq [F(\theta_0) - \Flb] \Ind{ \mathcal{B}_{0}(R) } \\
&\quad - \condE{ [F(\theta_{j+1}) - \Flb] \Ind{ \mathcal{B}_{j+1}(R) } }{\mathcal{F}_0} + \frac{(L_{R+1} + \partial F_R) G_R^{\frac{1+\alpha}{2}}}{1+\alpha} \sum_{k=0}^j \lambda_{\max}(M_k)^{1+\alpha}.
\end{aligned}
\end{equation}
By \cref{A1,P2}, the right hand side is bounded by
\begin{equation}
[F(\theta_0) - \Flb] \Ind{ \mathcal{B}_{0}(R) } +  \frac{(L_{R+1} + \partial F_R) G_R^{\frac{1+\alpha}{2}}}{1+\alpha}S,
\end{equation}
which is finite with probability one. Therefore, $\sum_{k=0}^\infty \lambda_{\min}(M_k) \inlinecondE{ \inlinenorm{ \dot{F}(\theta_k)}_2^2 \Ind{\mathcal{B}_k(R)}}{\mathcal{F}_0}$ is finite almost surely. Furthermore, by \cref{P3}, $\liminf_k \inlinecondE{ \inlinenorm{ \dot{F}(\theta_k)}_2^2 \Ind{\mathcal{B}_k(R)}}{\mathcal{F}_0} = 0$ with probability one.

Now, for any $\delta > 0$, Markov's inequality implies that for all $j +1 \in \mathbb{N}$,
\begin{equation}
\condPrb{ \bigcap_{k=j}^\infty \left\lbrace \norm{ \dot{F}(\theta_k)}_2^2 \Ind{\mathcal{B}_k(R)} > \delta \right\rbrace }{\mathcal{F}_0} \leq \frac{1}{\delta} \min_{j \leq k} \condE{ \norm{ \dot{F}(\theta_k)}_2^2 \Ind{\mathcal{B}_k(R)} }{\mathcal{F}_0},
\end{equation}
where the right hand side is zero with probability one because $\liminf_k \inlinecondE{ \inlinenorm{ \dot{F}(\theta_k)}_2^2 \Ind{\mathcal{B}_k(R)}}{\mathcal{F}_0} = 0$ with probability one.

As the countable union of measure zero sets has measure zero, we conclude that for all $\delta > 0$,
\begin{equation}
\condPrb{ \norm{ \dot{F}(\theta_k)}_2^2 \Ind{\mathcal{B}_k(R)} \leq \delta, ~i.o. }{\mathcal{F}_0} = 1,
\end{equation}
with probability one.
\end{proof}

Unfortunately, \cref{lemma-liminf-grad-constrained} does not guarantee that the gradient norm will be captured within a region of zero. In order to prove this, we first show that it is not possible (i.e., a zero probability event) for the limit supremum and limit infimum of the gradients to be distinct (cf., \cref{theorem-capture} for iterate distances).
\begin{lemma} \label{lemma-limsup-grad-constrained}
Let $\lbrace \theta_k \rbrace$ be defined as in \eqref{eqn:iterates} satisfying \cref{P1,P2}. 
For all $k + 1 \in \mathbb{N}$ and $R \geq 0$, let $\mathcal{B}_k(R) = \bigcap_{j=0}^\infty \lbrace \norm{\theta_k}_2 \leq R \rbrace$.
Suppose \cref{A1,A2,A3a,A4a} hold. Then, $\forall R \geq 0$ and for all $\delta > 0$,
\begin{equation}
\condPrb{  \norm{ \dot{F}(\theta_{k+1})}_2 \Ind{ \mathcal{B}_{k+1}(R) } > \delta, \norm{ \dot{F}(\theta_k)}_2 \Ind{ \mathcal{B}_k(R) } \leq \delta , ~i.o.}{\mathcal{F}_0} = 0,
\end{equation}
with probability one.
\end{lemma}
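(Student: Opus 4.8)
The plan is to replay the proof of \cref{theorem-capture} with the iterate increment replaced by the gradient increment, using \cref{A4a} to convert the latter into the former. Fix $R \geq 0$, $\delta > 0$, and let $\epsilon > 0$. For each $k$, set
\[
A_k(\epsilon) = \left\lbrace \norm{ \dot F(\theta_{k+1})}_2 \Ind{ \mathcal{B}_{k+1}(R)} \geq \delta + \epsilon, ~ \norm{ \dot F(\theta_k)}_2 \Ind{ \mathcal{B}_k(R)} \leq \delta \right\rbrace.
\]
On $A_k(\epsilon)$ the first inequality forces $\Ind{ \mathcal{B}_{k+1}(R)} = 1$, and since $\mathcal{B}_{k+1}(R) \subseteq \mathcal{B}_k(R)$ this also gives $\Ind{ \mathcal{B}_k(R)} = 1$; hence $\norm{ \dot F(\theta_{k+1})}_2 \geq \delta + \epsilon$ and $\norm{ \dot F(\theta_k)}_2 \leq \delta$, so the reverse triangle inequality yields $\norm{ \dot F(\theta_{k+1}) - \dot F(\theta_k)}_2 \geq \epsilon$. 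Because $\mathcal{B}_{k+1}(R)$ confines both $\theta_k$ and $\theta_{k+1}$ to $\overline{B(R)}$, \cref{A4a} supplies a constant $L_R > 0$ with $\norm{ \dot F(\theta_{k+1}) - \dot F(\theta_k)}_2 \leq L_R \norm{ \theta_{k+1} - \theta_k}_2^\alpha$, so the step must satisfy $\norm{ \theta_{k+1} - \theta_k}_2 \geq ( \epsilon / L_R)^{1/\alpha} =: \tau$. Finally, \eqref{eqn:iterates} and \cref{P1} give $\norm{ \theta_{k+1} - \theta_k}_2 \leq \lambda_{\max}(M_k) \norm{ \dot f(\theta_k, X_{k+1})}_2$. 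Thus $A_k(\epsilon)$ is contained in the event $\lbrace \lambda_{\max}(M_k) \norm{ \dot f(\theta_k, X_{k+1})}_2 \Ind{ \mathcal{B}_k(R)} \geq \tau \rbrace$, whose conditioning indicator $\Ind{ \mathcal{B}_k(R)}$ is $\mathcal{F}_k$-measurable.

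With this reduction the remaining computation is that of \cref{theorem-capture} verbatim, with $\epsilon$ there replaced by the constant $\tau$. Applying the conditional Markov inequality, the tower property over $\mathcal{F}_k \supseteq \mathcal{F}_0$, and \cref{A3a}, I would bound
\[
\condPrb{ A_k(\epsilon)}{ \mathcal{F}_0} \leq \frac{1}{\tau^2} \lambda_{\max}(M_k)^2 \, \condE{ \condE{ \norm{ \dot f(\theta_k, X_{k+1})}_2^2}{ \mathcal{F}_k} \Ind{ \mathcal{B}_k(R)}}{ \mathcal{F}_0} \leq \frac{G_R}{\tau^2} \lambda_{\max}(M_k)^2,
\]
where $G_R = \sup_{\theta \in \overline{B(R)}} G(\theta) < \infty$ by upper semi-continuity of $G$. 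Since $\lambda_{\max}(M_k) \to 0$ and $1 + \alpha \leq 2$, \cref{P2} gives $\sum_k \lambda_{\max}(M_k)^2 < \infty$, so $\sum_k \condPrb{ A_k(\epsilon)}{ \mathcal{F}_0} < \infty$ with probability one, and the conditional first Borel-Cantelli lemma yields $\condPrb{ A_k(\epsilon)~i.o.}{ \mathcal{F}_0} = 0$ almost surely. Taking a countable sequence $\epsilon_n \downarrow 0$ and using that a countable union of null sets is null upgrades the threshold $\delta + \epsilon_n$ to the strict inequality $> \delta$, which is the claim.

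The crux is the reduction in the first step: transferring a jump of the gradient norm across $\delta$ into a lower bound on the iterate step size. The reverse triangle inequality and \cref{A4a} are what make this transfer possible, and it is essential that the nesting $\mathcal{B}_{k+1}(R) \subseteq \mathcal{B}_k(R)$ lets me replace the non-$\mathcal{F}_k$-measurable indicator $\Ind{ \mathcal{B}_{k+1}(R)}$ by $\Ind{ \mathcal{B}_k(R)}$ before conditioning; without this bookkeeping the tower-property step would be invalid. Once the event is expressed through the step size with the constant threshold $\tau$, no new difficulty arises and the summability follows from \cref{P2} exactly as in \cref{theorem-capture}. Alternatively, one could keep the gradient increment and apply Markov with exponent $(1+\alpha)/\alpha$ together with \cref{lemma-variance-control}, which reproduces the summable quantity $\lambda_{\max}(M_k)^{1+\alpha}$ directly; the reduction above is simply the cleaner route. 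The final passage $\epsilon_n \downarrow 0$ is handled precisely as in \cref{theorem-capture}.
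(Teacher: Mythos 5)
Your proof is correct and follows essentially the same route as the paper's: convert a jump of the gradient norm across $\delta$ into a lower bound on $\norm{\theta_{k+1}-\theta_k}_2$ via \cref{A4a} on $\overline{B(R)}$, bound the resulting step-size event by Chebyshev's inequality with \cref{A3a} and the tower property, sum using \cref{P2} (your explicit remark that $\lambda_{\max}(M_k)\to 0$ makes $\lambda_{\max}(M_k)^2$ summable is left implicit in the paper), apply Borel--Cantelli, and let $\epsilon_n \downarrow 0$. The only difference is cosmetic reparameterization: the paper takes the threshold $\delta + L_R\epsilon^\alpha$ with step bound $\epsilon$, while you take the threshold $\delta + \epsilon$ with step bound $(\epsilon/L_R)^{1/\alpha}$.
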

\begin{proof}
Let $\epsilon > 0$, $L_R > 0$, and $G_R$ be defined as in \cref{lemma-recursion-constrained}. Then, for $\delta > 0$,
\begin{align}
&\condPrb{ \norm{ \dot{F}(\theta_{k+1})}_2 \Ind{ \mathcal{B}_{k+1}(R) } \Ind{ \norm{ \dot{F}(\theta_k)}_2 \Ind{ \mathcal{B}_k(R) } \leq \delta } > \delta + L_R \epsilon^\alpha }{\mathcal{F}_0} \\
&= \mathbb{P} \bigg{[} \left( \norm{ \dot{F}(\theta_{k+1})}_2 - \norm{\dot{F}(\theta_k)}_2 + \norm{\dot F(\theta_k)}_2 \right) \Ind{ \mathcal{B}_{k+1}(R) }  \\
&\quad \times \Ind{ \norm{ \dot{F}(\theta_k)}_2 \Ind{ \mathcal{B}_k(R) } \leq \delta } > \delta + L_R \epsilon^\alpha \bigg{\vert}  \mathcal{F}_0 \bigg{]} \\
&\leq \mathbb{P} \bigg{[} L_R \norm{ \theta_{k+1} - \theta_k}_2^\alpha \Ind{ \mathcal B_{k+1}(R) }\Ind{ \norm{ \dot{F}(\theta_k)}_2 \Ind{ \mathcal{B}_k(R) } \leq \delta } > L_R \epsilon^\alpha \bigg{\vert} \mathcal{F}_0 \bigg{]} \\
&= \mathbb{P} \bigg{[} \norm{ M_k \dot f(\theta_k, X_{k+1} )}_2 \Ind{ \mathcal B_{k+1}(R) }\Ind{ \norm{ \dot{F}(\theta_k)}_2 \Ind{ \mathcal{B}_k(R) } \leq \delta } > \epsilon\bigg{\vert} \mathcal{F}_0 \bigg{]} \\
&\leq \condPrb{ \norm{M_k \dot f(\theta_k, X_{k+1} ) }_2 \Ind{ \mathcal B_k(R) } > \epsilon}{\mathcal{F}_0} \\
&\leq \frac{1}{\epsilon^2} \norm{M_k}_2^2 \condE{ \norm{ \dot f(\theta_k, X_{k+1}) }_2^2 \Ind{ \mathcal B_k(R)} }{\mathcal{F}_0} \\
&\leq \frac{1}{\epsilon^2} \norm{M_k}_2^2 G_R.
\end{align}
By \cref{P2}, the sum of the last expression over all $k+1 \in \mathbb{N}$ is finite. By the Borel-Cantelli lemma, for all $R \geq 0$, $\delta > 0$ and $\epsilon > 0$,
\begin{equation}
\condPrb{  \norm{ \dot{F}(\theta_{k+1})}_2 \Ind{ \mathcal{B}_{k+1}(R) } > \delta + L_R \epsilon^\alpha, \norm{ \dot{F}(\theta_k)}_2 \Ind{ \mathcal{B}_k(R) } \leq \delta , ~i.o.}{\mathcal{F}_0} = 0,
\end{equation}
with probability one.
Since this holds for any $\epsilon > 0$, it will hold for every value in a sequence $\epsilon_n \downarrow 0$. Since the countable union of measure zero events has measure zero,
\begin{equation}
\condPrb{  \norm{ \dot{F}(\theta_{k+1})}_2 \Ind{ \mathcal{B}_{k+1}(R) } > \delta, \norm{ \dot{F}(\theta_k)}_2 \Ind{ \mathcal{B}_k(R) } \leq \delta , ~i.o.}{\mathcal{F}_0} = 0,
\end{equation}
with probability one.
\end{proof}

We now put together \cref{lemma-liminf-grad-constrained,lemma-limsup-grad-constrained} to show that, on the event $\lbrace \sup_k \inlinenorm{\theta_k}_2 < \infty \rbrace$, $\inlinenorm{ \dot{F}(\theta_k) }_2 \to 0$ with probability one.

\begin{corollary} \label{corollary-convergence-grad-constrained}
Let $\lbrace \theta_k \rbrace$ be defined as in \eqref{eqn:iterates} satisfying \cref{P1,P2,P3}. 
Suppose \cref{A1,A2,A3a,A4a} hold. Then, on the event $\lbrace \sup_k \inlinenorm{\theta_k}_2 < \infty \rbrace$, $\lim_{k \to \infty} \inlinenorm{ \dot{F}(\theta_k)}_2 = 0$ with probability one.
\end{corollary}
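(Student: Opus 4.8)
The plan is to combine \cref{lemma-liminf-grad-constrained,lemma-limsup-grad-constrained} through a squeeze argument on the limit infimum and limit supremum of the constrained gradient norm, in direct analogy to how \cref{theorem-capture} and \cref{corollary-limits} pin down the behavior of $\inlinenorm{\theta_k}_2$. Fix $R \geq 0$ and write $g_k = \inlinenorm{\dot F(\theta_k)}_2 \Ind{\mathcal{B}_k(R)}$. The role of the indicator is that on the event $\lbrace \sup_k \inlinenorm{\theta_k}_2 \leq R \rbrace$ we have $\Ind{\mathcal{B}_k(R)} = 1$ for every $k$, so $g_k = \inlinenorm{\dot F(\theta_k)}_2$ there; hence it suffices to prove $g_k \to 0$ with probability one, and then take a union over $R \in \mathbb{N}$ to cover $\lbrace \sup_k \inlinenorm{\theta_k}_2 < \infty \rbrace$.

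First I would upgrade the two conditional lemmas to unconditional almost-sure statements. Since each lemma asserts that a conditional probability given $\mathcal{F}_0$ equals $1$ (respectively $0$) with probability one, taking expectations yields the corresponding unconditional probability. Applying \cref{lemma-liminf-grad-constrained} along a countable sequence of rational $\delta \downarrow 0$ and intersecting the resulting full-measure events gives $\liminf_k g_k^2 = 0$, equivalently $\liminf_k g_k = 0$, with probability one. Applying \cref{lemma-limsup-grad-constrained} for every rational $\delta > 0$ and intersecting gives a full-measure event on which, for each such $\delta$, there are only finitely many indices $k$ with $g_k \leq \delta$ and $g_{k+1} > \delta$---that is, the level $\delta$ is up-crossed only finitely often.

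On the intersection of these two full-measure events I would argue by contradiction that $\limsup_k g_k = 0$. Suppose instead $\limsup_k g_k = c > 0$ and choose a rational $\delta \in (0, c)$. Because $\liminf_k g_k = 0 < \delta$, we have $g_k \leq \delta$ infinitely often, and because $\limsup_k g_k = c > \delta$, we have $g_k > \delta$ infinitely often; consequently the indicator $\Ind{g_k > \delta}$ takes both values infinitely often, which forces infinitely many single-step transitions $g_k \leq \delta$, $g_{k+1} > \delta$. This contradicts the finiteness of up-crossings established above, so $\limsup_k g_k = 0$, and together with $\liminf_k g_k = 0$ we conclude $g_k \to 0$ with probability one. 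Taking the union over $R \in \mathbb{N}$ then delivers the claim on $\lbrace \sup_k \inlinenorm{\theta_k}_2 < \infty \rbrace$.

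I expect the main obstacle to be the elementary but delicate real-analysis step isolating the single-step up-crossings: one must verify that an $\Ind{g_k > \delta}$ sequence taking both values infinitely often necessarily exhibits infinitely many $0 \to 1$ transitions, which is precisely the event that \cref{lemma-limsup-grad-constrained} rules out. The remaining work is bookkeeping---ensuring the ``for all $\delta$'' quantifiers are handled on a single full-measure event by restricting to rational $\delta$, and correctly passing from the $\mathcal{F}_0$-conditional statements to unconditional almost-sure ones.
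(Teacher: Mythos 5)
Your proposal is correct and takes essentially the same approach as the paper: both combine \cref{lemma-liminf-grad-constrained} with \cref{lemma-limsup-grad-constrained} to conclude that, for each fixed $R$ and $\delta$, the event $\lbrace \inlinenorm{\dot F(\theta_{k+1})}_2 \inlineInd{\mathcal{B}_{k+1}(R)} > \delta~i.o. \rbrace$ has probability zero, and then take countable unions over $\delta \downarrow 0$ and $R \in \mathbb{N}$. Your explicit single-step up-crossing contradiction and unconditional phrasing are simply more detailed renderings of the paper's step identifying that event, on the full-measure event from \cref{lemma-liminf-grad-constrained}, with the up-crossing event that \cref{lemma-limsup-grad-constrained} rules out conditionally on $\mathcal{F}_0$.
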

\begin{proof}
For any $R \geq 0$ and $\delta > 0$, \cref{lemma-liminf-grad-constrained} implies
\begin{equation}
\begin{aligned}
&\condPrb{ \norm{\dot{F}(\theta_{k+1})}_2 \Ind{ \mathcal{B}_{k+1}(R) } > \delta, ~i.o. }{\mathcal{F}_0} \\
&\quad = \condPrb{ \left\lbrace \norm{\dot{F}(\theta_{k+1})}_2 \Ind{ \mathcal{B}_{k+1}(R) } > \delta \right\rbrace \cap \left\lbrace \norm{\dot{F}(\theta_{k})}_2 \Ind{ \mathcal{B}_{k}(R) } \leq \delta, ~i.o. \right\rbrace }{\mathcal{F}_0},
\end{aligned}
\end{equation}
with probability one. We see that this latter event is exactly, 
\begin{equation}
\condPrb{  \norm{ \dot{F}(\theta_{k+1})}_2 \Ind{ \mathcal{B}_{k+1}(R) } > \delta, \norm{ \dot{F}(\theta_k)}_2 \Ind{ \mathcal{B}_k(R) } \leq \delta , ~i.o.}{\mathcal{F}_0},
\end{equation}
which, by \cref{lemma-limsup-grad-constrained}, is zero with probability one. Therefore, $\condPrb{ \inlinenorm{\dot F(\theta_{k+1})}_2 \inlineInd{ \mathcal{B}_{k+1}(R) } > \delta, ~i.o. }{\mathcal{F}_0}$ is zero with probability one. Letting $\delta_n \downarrow 0$ and noting that the countable union of measure zero sets has measure zero, we conclude $\condPrb{ \inlinenorm{\dot F(\theta_{k+1})}_2 \inlineInd{ \mathcal{B}_{k+1}(R) } > 0, ~i.o. }{\mathcal{F}_0} = 0$ with probability one.

Therefore, for all $R \geq 0$, $\lbrace \sup_k \norm{\theta_k}_2 \leq R \rbrace \subset \lbrace \lim_{k\to\infty} \inlinenorm{\dot{F}(\theta_k)}_2 = 0 \rbrace$ up to a measure zero set. Since $\lbrace \sup_k \inlinenorm{\theta_k}_2 < \infty \rbrace = \cup_{R \in \mathbb{N} } \lbrace \sup_k \inlinenorm{\theta_k}_2 \leq R \rbrace$, the result follows.
\end{proof}

\section{Analysis of the Global H\"{o}lder Continuity and Expected Smoothness Case} \label{section-analysis-infinite}
We will divide the proof into four pieces. In \cref{subsection-global-holder-objective}, we will begin by proving that $\lbrace F(\theta_k) \rbrace$ converges to an \textit{integrable} random variable with probability one, which follows the same strategy used for \cref{theorem-convergence-local-holder-general-noise}. In \cref{subsection-global-holder-expected-objective}, we will then prove that $\lbrace \inlineE{ F(\theta_k) } \rbrace$ are bounded, which is an alternative way to imply that $F_{\lim}$ is integrable via Fatou's lemma and which implies the $L^1$ convergence of $F(\theta_k)^\gamma$ to $F_{\lim}^\gamma$ for $\gamma \in [0,1)$ by H\"{o}lder's inequality and uniform integrability. In \cref{subsection-global-holder-gradient}, we will prove that $\lbrace \inlinenorm{ \dot{F}(\theta_k)}_2 \rbrace$ converges to zero with probability one. Finally, in \cref{subsection-global-holder-expected-gradient}, we will prove that $\sup_{k} \inlineE{ \inlinenorm{ \dot{F}(\theta_k)}_2^2} < \infty$, from which we can conclude that $\inlineE{ \inlinenorm{\dot F(\theta_k)}_2} \to 0$ as $k \to \infty$.

\subsection{Asymptotic Behavior of the Objective Function} \label{subsection-global-holder-objective}
We begin with an analogue of \cref{lemma-recursion-constrained} that allows us to use the global H\"{o}lder assumption to remove the indicator function that burdened \cref{lemma-recursion-constrained}

\begin{lemma} \label{lemma-recursion-unconstrained}
Let $\lbrace \theta_k \rbrace$ be defined as in \eqref{eqn:iterates} satisfying \cref{P1}.
Suppose \cref{A1,A2,A3b,A4b} hold. Then,
\begin{equation}
\begin{aligned}
&\condE{F(\theta_{k+1}) - \Flb}{\mathcal{F}_k} 
\leq [ F(\theta_k) - \Flb ] \left( 1 + \frac{LC_2}{2} \lambda_{\max}(M_k)^{1+\alpha} \right) \\
&\quad - \norm{ \dot{F}(\theta_k)}_2^2 \left( \lambda_{\min}(M_k) - \frac{L C_3}{2} \lambda_{\max}(M_k)^{1+\alpha} \right) \\
&\quad+ \frac{L}{1+\alpha}\lambda_{\max}(M_k)^{1+\alpha}\left(\frac{1+\alpha}{2}C_1+\frac{1-\alpha}{2} \right)
\end{aligned}
\end{equation}
If, in addition, \cref{P4} holds then $\exists K \in \mathbb{N}$ such that for all $k \geq K$, 
\begin{equation}
\begin{aligned}
&\condE{F(\theta_{k+1}) - \Flb}{\mathcal{F}_k} 
\leq [ F(\theta_k) - \Flb ] \left( 1 + \frac{LC_2}{2} \lambda_{\max}(M_k)^{1+\alpha} \right) \\
&\quad - \frac{1}{2}\lambda_{\min}(M_k) \norm{ \dot{F}(\theta_k)}_2^2 + \frac{L}{1+\alpha}\lambda_{\max}(M_k)^{1+\alpha}\left(\frac{1+\alpha}{2}C_1+\frac{1-\alpha}{2} \right)
\end{aligned}
\end{equation}

\end{lemma}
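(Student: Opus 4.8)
The plan is to mirror the derivation of \cref{lemma-recursion-constrained}, but to exploit the \emph{global} H\"{o}lder assumption \cref{A4b} so that the descent estimate holds on all of $\mathbb{R}^p$ with a single constant $L$. This dispenses entirely with the indicator functions $\mathcal{B}_k(R)$ and the delicate event-splitting that burdened \cref{lemma-recursion-constrained}. First I would invoke the \cref{A4b} case of \cref{lemma-ftc} with $\theta = \theta_{k+1}$ and $\varphi = \theta_k$. Since \eqref{eqn:iterates} gives $\theta_{k+1} - \theta_k = -M_k \dot{f}(\theta_k, X_{k+1})$, subtracting $\Flb$ from both sides yields, for some fixed $L > 0$,
\begin{equation}
F(\theta_{k+1}) - \Flb \leq [F(\theta_k) - \Flb] - \dot{F}(\theta_k)' M_k \dot{f}(\theta_k, X_{k+1}) + \frac{L}{1+\alpha}\norm{M_k \dot{f}(\theta_k, X_{k+1})}_2^{1+\alpha}.
\end{equation}

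Next I would take $\condE{\cdot}{\mathcal{F}_k}$ of both sides. Because $\theta_k$ is $\mathcal{F}_k$-measurable while $X_{k+1}$ is independent of $\mathcal{F}_k$, \cref{A2} collapses the linear term to $-\dot{F}(\theta_k)' M_k \dot{F}(\theta_k)$, which by \cref{P1} is at most $-\lambda_{\min}(M_k)\norm{\dot{F}(\theta_k)}_2^2$. For the last term, \cref{P1} gives $\norm{M_k \dot{f}}_2^{1+\alpha} \leq \lambda_{\max}(M_k)^{1+\alpha}\norm{\dot{f}}_2^{1+\alpha}$; then \cref{lemma-variance-control} (applied conditionally, freezing $\theta_k$) converts the conditional $(1+\alpha)$-moment into the conditional second moment, and finally \cref{A3b} bounds that second moment by $C_1 + C_2(F(\theta_k)-\Flb) + C_3\norm{\dot{F}(\theta_k)}_2^2$.

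The remaining work is purely bookkeeping: substituting these three estimates and regrouping. Collecting the coefficient of $F(\theta_k) - \Flb$ gives $1 + \tfrac{LC_2}{2}\lambda_{\max}(M_k)^{1+\alpha}$; collecting the coefficient of $\norm{\dot{F}(\theta_k)}_2^2$ gives $-\bigl(\lambda_{\min}(M_k) - \tfrac{LC_3}{2}\lambda_{\max}(M_k)^{1+\alpha}\bigr)$; and the $\tfrac{1-\alpha}{2}$ residual from \cref{lemma-variance-control} combines with the $C_1$ contribution into the stated deterministic remainder $\tfrac{L}{1+\alpha}\lambda_{\max}(M_k)^{1+\alpha}\bigl(\tfrac{1+\alpha}{2}C_1 + \tfrac{1-\alpha}{2}\bigr)$. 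This reproduces the first displayed inequality. For the second inequality I would apply \cref{Lemma4} with $C = LC_3$, which furnishes a $K \in \mathbb{N}$ beyond which $\lambda_{\min}(M_k) - \tfrac{LC_3}{2}\lambda_{\max}(M_k)^{1+\alpha} \geq \tfrac{1}{2}\lambda_{\min}(M_k)$, so the negative gradient term is dominated by $-\tfrac{1}{2}\lambda_{\min}(M_k)\norm{\dot{F}(\theta_k)}_2^2$ for all $k \geq K$.

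Relative to \cref{lemma-recursion-constrained} this is a simplification rather than a harder argument, so I do not expect a genuine conceptual obstacle; the only points requiring care are the constant-matching in the regrouping step and confirming that \cref{Lemma4} is invoked with the correct constant $C = LC_3$. I note that the hypothesis $C_3 \geq 1$ in \cref{A3b} plays no role in this lemma itself—it is needed only downstream to guarantee a well-specified variance—so I would not invoke it here.
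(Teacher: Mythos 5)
Your proposal is correct and follows essentially the same route as the paper's proof: the global-H\"{o}lder case of \cref{lemma-ftc}, conditional expectation with \cref{A2}, the eigenvalue bounds from \cref{P1}, \cref{lemma-variance-control} followed by \cref{A3b}, and finally \cref{Lemma4} with $C = LC_3$ for the tail estimate; your constant bookkeeping matches the stated coefficients exactly. Your side remark that $C_3 \geq 1$ is not needed here (only downstream for variance well-specification) is also accurate.
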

\begin{proof}
By \cref{lemma-ftc} and \eqref{eqn:iterates},
\begin{equation}
F(\theta_{k+1})- \Flb \leq F(\theta_{k})-\Flb -\dot{F} (\theta_{k})^{'}M_k \dot{f}(\theta_k,X_{k+1})+\frac{L}{1+\alpha} \norm{ M_k \dot{f}(\theta_k,X_{k+1})}_2^{1+\alpha}
\end{equation}

Now, taking conditional expectations, applying \cref{A2,A3b} and \cref{lemma-variance-control},
\begin{equation}
\begin{aligned}
&\condE{ F(\theta_{k+1}) - \Flb }{\mathcal{F}_k} \\
&\leq F(\theta_k) - \Flb - \dot F (\theta_k)' M_k \dot F(\theta_k) \\
& + \frac{L}{1+\alpha}\lambda_{\max}(M_k)^{1+\alpha}\left[\left(\frac{1+\alpha}{2}\right)\left( C_1 + C_2 (F(\theta_k) - \Flb) + C_3 \norm{ \dot F(\theta_k)}_2^2 \right) +\frac{1-\alpha}{2} \right]
\end{aligned}
\end{equation}

Finally, using \cref{P1} to show $ - \dot F (\theta_k) ' M_k \dot F(\theta_k) \leq - \lambda_{\min}(M_k) \inlinenorm{ \dot F (\theta_k) }_2^2$ and rearranging the terms, the first part of the result follows.

By \cref{Lemma4}, there exists $K \in \mathbb{N}$ such that for all $k \geq K$,
\begin{equation}
\lambda_{\min}(M_k) - \frac{L C_3}{2} \lambda_{\max}(M_k)^{1+\alpha} \geq \frac{1}{2} \lambda_{\min}(M_k).
\end{equation}
The result follows.
\end{proof}

\begin{corollary} \label{corollary-convergence-objective-unconstrained}
Let $\lbrace \theta_k \rbrace$ be defined as in \eqref{eqn:iterates} satisfying \cref{P1,P2,P4}. 
Suppose \cref{A1,A2,A3b,A4b} hold.
Then, there exists an integrable random variable $F_{\lim}$ such that $\lim_{k \to \infty} F(\theta_k) = F_{\lim}$ with probability one.
\end{corollary}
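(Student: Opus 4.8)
The plan is to recognize the second inequality of \cref{lemma-recursion-unconstrained} as a Robbins--Siegmund recursion and feed it into the same almost-sure convergence machinery used for \cref{corollary-convergence-objective-constrained}, adding an expectation argument to secure integrability. Concretely, set $V_k = F(\theta_k) - \Flb$, which is nonnegative by \cref{A1}. Under \cref{P1,P2,P4}, \cref{Lemma4} supplies an index $K$ beyond which the coefficient of $\inlinenorm{\dot F(\theta_k)}_2^2$ has the favorable sign, so for $k \geq K$ the lemma reads $\condE{V_{k+1}}{\mathcal{F}_k} \leq (1 + \beta_k) V_k - \zeta_k + \xi_k$, with $\beta_k = \tfrac{LC_2}{2}\lambda_{\max}(M_k)^{1+\alpha}$, $\zeta_k = \tfrac12 \lambda_{\min}(M_k)\inlinenorm{\dot F(\theta_k)}_2^2 \geq 0$, and $\xi_k = \tfrac{L}{1+\alpha}\lambda_{\max}(M_k)^{1+\alpha}(\tfrac{1+\alpha}{2}C_1 + \tfrac{1-\alpha}{2})$. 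The crucial structural point is that \cref{P2} makes both $\sum_k \beta_k$ and $\sum_k \xi_k$ finite \emph{deterministically}, since each is a constant multiple of $\sum_k \lambda_{\max}(M_k)^{1+\alpha} = S$.

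First I would invoke the Robbins--Siegmund theorem (\citet[Exercise II.4]{neveu1975}, cf.\ \citet{robbins1971}), exactly as in \cref{corollary-convergence-objective-constrained}, to conclude that $V_k$ converges almost surely to a finite random variable $V_\infty$; setting $F_{\lim} = V_\infty + \Flb$ then gives $\lim_k F(\theta_k) = F_{\lim}$ with probability one. In contrast to the constrained case, \cref{A4b} lets \cref{lemma-recursion-unconstrained} dispense with the indicators $\inlineInd{\mathcal{B}_k(R)}$, so no union over radii $R$ is needed here; the multiplicative factor $(1+\beta_k)$ produced by the $C_2(F(\theta)-\Flb)$ term of \cref{A3b} is precisely what forces the use of the full Robbins--Siegmund statement rather than a plain supermartingale argument.

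The main obstacle is upgrading this almost-sure finiteness to integrability of $F_{\lim}$. The route I would take is to take unconditional expectations in the displayed recursion and drop the nonpositive term $-\zeta_k$, giving $\E{V_{k+1}} \leq (1+\beta_k)\E{V_k} + \xi_k$ for $k \geq K$. Iterating and bounding $\prod_{j \geq K}(1+\beta_j) \leq \exp(\sum_{j\geq K}\beta_j) < \infty$ via \cref{P2} yields $\sup_k \E{V_k} < \infty$, provided the expectation is finite at the finite index $K$. That prerequisite, $\E{V_K} < \infty$, I would establish by a short induction from the deterministic value $V_0 = F(\theta_0) - \Flb$: applying the first (all-$k$) inequality of \cref{lemma-recursion-unconstrained} and controlling the possibly-positive middle term through a gradient-domination bound of the form $\inlinenorm{\dot F(\theta_k)}_2^2 \leq C_4(1 + V_k)$ implied by \cref{lemma-grad-bounded-objective} keeps each $\E{V_k}$ finite across the finitely many indices below $K$. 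With $\sup_k \E{V_k} < \infty$ in hand, Fatou's lemma gives $\E{V_\infty} \leq \liminf_k \E{V_k} \leq \sup_k \E{V_k} < \infty$, so $F_{\lim} - \Flb$, and hence $F_{\lim}$, is integrable. I expect the bookkeeping in the finite-index finiteness step to be the only genuinely delicate part, since everything downstream reduces to summability supplied by \cref{P2} together with a single application of Fatou.
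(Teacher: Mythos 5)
Your proposal is correct, and its core coincides with the paper's: feed the second ($k \geq K$) inequality of \cref{lemma-recursion-unconstrained} into the Robbins--Siegmund/Neveu machinery (\citet[Exercise II.4]{neveu1975}) exactly as in \cref{corollary-convergence-objective-constrained}, using \cref{P2} to make the coefficient sequences deterministically summable. The difference is in how integrability of $F_{\lim}$ is obtained. The paper's proof is two lines: it reads the integrability of the limit directly out of Neveu's Exercise II.4, whose conclusion (for nonnegative integrable terms with deterministic summable perturbations) already includes an integrable limit. You instead re-derive it by hand: iterate the unconditional expectation of the recursion to get $\sup_k \E{F(\theta_k) - \Flb} < \infty$ and then apply Fatou. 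This is precisely the alternative route the paper itself flags parenthetically in \cref{corollary-convergence-expected-objective-unconstrained} (``we can now apply Fatou's lemma to prove $\E{F_{\lim}} < \infty$, if it were not already provided for in \citet{neveu1975}''), so your argument essentially folds that later corollary into this one. One point in your favor: applying Neveu's exercise (or your expectation iteration started at index $K$) tacitly requires $\E{F(\theta_K) - \Flb} < \infty$, and the paper never verifies this; your induction from the deterministic $V_0$ across the finitely many indices below $K$, using the gradient-domination bound $\inlinenorm{\dot F(\theta_k)}_2^2 \leq C_4(1 + V_k)$ from \cref{lemma-grad-bounded-objective} to control the possibly wrong-signed middle term, supplies exactly the missing prerequisite. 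So your proof is slightly longer but more self-contained, and it patches a hygiene gap the paper leaves implicit.
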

\begin{proof}
\cref{lemma-recursion-unconstrained} implies $\exists K \in \mathbb{N}$ such that $k \geq K$, 
\begin{equation}
\begin{aligned}
&\condE{F(\theta_{k+1}) - \Flb}{\mathcal{F}_k} 
\leq [ F(\theta_k) - \Flb ] \left( 1 + \frac{LC_2}{2} \lambda_{\max}(M_k)^{1+\alpha} \right) \\
&\quad+ \frac{L}{1+\alpha}\lambda_{\max}(M_k)^{1+\alpha}\left(\frac{1+\alpha}{2}C_1+\frac{1-\alpha}{2} \right)
\end{aligned}
\end{equation}

By \citet[Exercise II.4]{neveu1975} (cf. \citet*{robbins1971}) and \cref{P2}, $\lim_{k \to \infty} [ F(\theta_k) - \Flb]$ converges to an integrable random variable with probability one. The result follows.
\end{proof}

\subsection{Asymptotic Behavior of the Expected Objective Function} \label{subsection-global-holder-expected-objective}

We now follow \citet{lei2019} to prove that expected value of the objective function evaluated at the iterates remains bounded. This relies on the following recursive relationship.

\begin{lemma} \label{lemma-recursion-expected-unconstrained}
Let $\lbrace \theta_k \rbrace$ be defined as in \eqref{eqn:iterates} satisfying \cref{P1,P2,P4}. 
Suppose \cref{A1,A2,A3b,A4b} hold. 
There exists a $K \in \mathbb{N}$ such that for all $k \geq K$,
\begin{equation}
\begin{aligned}
&\condE{F(\theta_{k+1}) - \Flb}{\mathcal{F}_k} + \left[ \frac{LC_1}{2}+\frac{L}{2}\left(\frac{1-\alpha}{1+\alpha}\right)\right] \sum_{j=k+1}^{\infty} \lambda_{\max}(M_j)^{1+\alpha}  \\
&\leq \exp\left(\frac{LC_2}{2}\lambda_{\max}(M_k)^{1+\alpha}\right)\left[ F(\theta_k) - \Flb + \left[ \frac{LC_1}{2}+\frac{L}{2}\left(\frac{1-\alpha}{1+\alpha}\right)\right] \sum_{j=k}^{\infty} \lambda_{\max}(M_j)^{1+\alpha} \right]
\end{aligned}
\end{equation}
\end{lemma}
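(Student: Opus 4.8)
The plan is to read off the desired inequality from the second (i.e., $k \geq K$) recursion already established in \cref{lemma-recursion-unconstrained}, and then to absorb the additive ``noise'' term into a convergent tail sum so that an affine-in-expectation recursion becomes a purely multiplicative one. First I would start from the $k \geq K$ bound of \cref{lemma-recursion-unconstrained} and discard the gradient term $-\frac{1}{2}\lambda_{\min}(M_k)\norm{\dot F(\theta_k)}_2^2$, which is nonpositive by \cref{P1}. After collecting the constant coefficient via $\frac{L}{1+\alpha}\left(\frac{1+\alpha}{2}C_1 + \frac{1-\alpha}{2}\right) = D$ with $D := \frac{LC_1}{2} + \frac{L}{2}\left(\frac{1-\alpha}{1+\alpha}\right)$, this yields
\begin{equation}
\condE{F(\theta_{k+1}) - \Flb}{\mathcal{F}_k} \leq [F(\theta_k) - \Flb]\left(1 + \frac{LC_2}{2}\lambda_{\max}(M_k)^{1+\alpha}\right) + D\,\lambda_{\max}(M_k)^{1+\alpha}.
\end{equation}

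Next I would introduce the tail sum $T_k := D\sum_{j=k}^\infty \lambda_{\max}(M_j)^{1+\alpha}$, which is finite by \cref{P2} and satisfies the telescoping identity $T_k = D\,\lambda_{\max}(M_k)^{1+\alpha} + T_{k+1}$. Adding $T_{k+1}$ to both sides of the displayed inequality and combining $D\,\lambda_{\max}(M_k)^{1+\alpha} + T_{k+1} = T_k$ gives
\begin{equation}
\condE{F(\theta_{k+1}) - \Flb}{\mathcal{F}_k} + T_{k+1} \leq [F(\theta_k) - \Flb]\left(1 + \frac{LC_2}{2}\lambda_{\max}(M_k)^{1+\alpha}\right) + T_k.
\end{equation}
Finally, writing $a_k := \frac{LC_2}{2}\lambda_{\max}(M_k)^{1+\alpha} \geq 0$ and invoking the elementary bound $1 + a_k \leq \exp(a_k)$ together with the nonnegativity of $F(\theta_k) - \Flb$ (\cref{A1}) and of $T_k$, I would bound the right-hand side by $\exp(a_k)\left[F(\theta_k) - \Flb + T_k\right]$, which is exactly the claimed inequality.

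There is no serious obstacle here; the only conceptual step is the decision to fold the additive perturbation into the convergent tail sum $T_k$, which turns the affine recursion into the multiplicative form $\condE{Y_{k+1}}{\mathcal{F}_k} \leq e^{a_k} Y_k$ with $Y_k := F(\theta_k) - \Flb + T_k \geq 0$. This is precisely the form that lets the ensuing corollary conclude $\sup_k \inlineE{F(\theta_k)} < \infty$, by iterating into a product of the $e^{a_k}$ and using $\sum_k a_k < \infty$ from \cref{P2}. The only care needed is verifying that $T_k$ is finite so that adding it to both sides is legitimate, which is immediate from \cref{P2}.
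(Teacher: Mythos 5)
Your proposal is correct and follows essentially the same route as the paper: start from the $k \geq K$ recursion of \cref{lemma-recursion-unconstrained} with the (nonpositive) gradient term discarded, fold the additive term into the convergent tail sum $T_k$ guaranteed finite by \cref{P2}, and pass to the multiplicative form via $1 + x \leq \exp(x)$ and $\exp(x) \geq 1$. The only difference is cosmetic ordering—the paper first inflates the additive term by the factor $\left(1 + \frac{LC_2}{2}\lambda_{\max}(M_k)^{1+\alpha}\right)$ and exponentiates before adding the tail sum, whereas you telescope first and exponentiate last—but the argument is the same.
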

\begin{proof}
\cref{lemma-recursion-unconstrained} implies $\exists K \in \mathbb{N}$ such that $k \geq K$, 
\begin{equation}
\begin{aligned}
&\condE{F(\theta_{k+1}) - \Flb}{\mathcal{F}_k} 
\leq [ F(\theta_k) - \Flb ] \left( 1 + \frac{LC_2}{2} \lambda_{\max}(M_k)^{1+\alpha} \right) \\
&\quad+ \frac{L}{1+\alpha}\lambda_{\max}(M_k)^{1+\alpha}\left(\frac{1+\alpha}{2}C_1+\frac{1-\alpha}{2} \right) \left( 1 + \frac{LC_2}{2} \lambda_{\max}(M_k)^{1+\alpha} \right)
\end{aligned}
\end{equation}

Since $1 + x \leq \exp(x)$ for $x \geq 0$, 
\begin{equation}
\begin{aligned}
&\condE{F(\theta_{k+1}) - \Flb}{\mathcal{F}_k} 
\leq \exp\left(\frac{LC_2}{2}\lambda_{\max}(M_k)^{1+\alpha}\right) \\
& \quad \times  \left(  F(\theta_k) - \Flb  + \left[ \frac{LC_1}{2}+\frac{L}{2}\left(\frac{1-\alpha}{1+\alpha}\right)\right] \lambda_{\max}(M_k)^{1+\alpha} \right)
\end{aligned}
\end{equation}

The result follows by \cref{P2}, adding
\begin{equation}
\left[ \frac{LC_1}{2}+\frac{L}{2}\left(\frac{1-\alpha}{1+\alpha}\right)\right] \sum_{j=k+1}^\infty \lambda_{\max}(M_j)^{1+\alpha}
\end{equation}
to both sides, and noting that $\exp(x) \geq 1$ for $x \geq 0$.
\end{proof}

\begin{corollary} \label{corollary-convergence-expected-objective-unconstrained}
Let $\lbrace \theta_k \rbrace$ be defined as in \eqref{eqn:iterates} satisfying \cref{P1,P2,P4}. 
Suppose \cref{A1,A2,A3b,A4b} hold.
Then, $\sup_{k} \inlinecondE{ F(\theta_k) }{\mathcal{F}_0} < \infty$ with probability one.
Finally, for any $\gamma \in [0,1)$, $\lim_{k \to \infty} \inlinecondE{ | (F(\theta_k) - \Flb)^\gamma - (F_{\lim}-\Flb)^\gamma | }{\mathcal{F}_0} = 0$ with probability one.
\end{corollary}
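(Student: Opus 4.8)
The plan is to prove the two assertions separately: the uniform bound on $\condE{F(\theta_k)}{\mathcal{F}_0}$ comes from iterating the supermartingale-type recursion in \cref{lemma-recursion-expected-unconstrained}, and the $L^1$ statement comes from upgrading the almost-sure convergence of \cref{corollary-convergence-objective-unconstrained} via a uniform integrability argument. For the first assertion, set $c = \frac{LC_1}{2} + \frac{L}{2}\frac{1-\alpha}{1+\alpha}$ and define the nonnegative quantities
\[
Y_k := F(\theta_k) - \Flb + c\sum_{j=k}^{\infty} \lambda_{\max}(M_j)^{1+\alpha}, \qquad a_k := \exp\left( \frac{LC_2}{2}\lambda_{\max}(M_k)^{1+\alpha}\right),
\]
which are finite by \cref{P2}. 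In this notation \cref{lemma-recursion-expected-unconstrained} reads $\condE{Y_{k+1}}{\mathcal{F}_k} \leq a_k Y_k$ for all $k \geq K$. Since all quantities are nonnegative, iterating through the tower property gives, for $k \geq K$,
\[
\condE{Y_k}{\mathcal{F}_K} \leq Y_K \prod_{j=K}^{k-1} a_j \leq Y_K \exp\left(\frac{LC_2}{2}\sum_{j=K}^{\infty} \lambda_{\max}(M_j)^{1+\alpha}\right) \leq Y_K \exp\left(\frac{LC_2 S}{2}\right),
\]
using \cref{P2} for the final bound. Because $F(\theta_k) - \Flb \leq Y_k$ and $Y_K < \infty$ almost surely, this already gives $\sup_{k \geq K}\condE{F(\theta_k)-\Flb}{\mathcal{F}_K} < \infty$ with probability one.

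To pass to $\mathcal{F}_0$ and to cover the finitely many indices $k < K$, I would use the first inequality of \cref{lemma-recursion-unconstrained} (valid for every $k$): dropping the nonpositive term $-\lambda_{\min}(M_k)\norm{\dot F(\theta_k)}_2^2$ and controlling the remaining gradient contribution $\frac{LC_3}{2}\lambda_{\max}(M_k)^{1+\alpha}\norm{\dot F(\theta_k)}_2^2$ through \cref{lemma-grad-bounded-objective}, which bounds $\norm{\dot F(\theta_k)}_2^2$ by $(F(\theta_k)-\Flb)^{\frac{2\alpha}{1+\alpha}} \leq 1 + (F(\theta_k)-\Flb)$ up to a constant. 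This yields $\condE{F(\theta_{k+1})-\Flb}{\mathcal{F}_k} \leq b_k (F(\theta_k) - \Flb) + d_k$ with deterministic finite $b_k, d_k$. A finite induction starting from the $\mathcal{F}_0$-measurable value $F(\theta_0)$ gives $\condE{F(\theta_k)}{\mathcal{F}_0} < \infty$ for each $k \leq K$, hence $\condE{Y_K}{\mathcal{F}_0} < \infty$. Combining with the displayed bound through the tower property, $\condE{F(\theta_k)-\Flb}{\mathcal{F}_0} \leq \exp(\frac{LC_2 S}{2})\condE{Y_K}{\mathcal{F}_0} < \infty$ for $k \geq K$, and taking the maximum over the finitely many remaining indices proves $\sup_k \condE{F(\theta_k)}{\mathcal{F}_0} < \infty$ almost surely.

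For the second assertion, fix $\gamma \in (0,1)$ (the case $\gamma = 0$ is trivial). By \cref{corollary-convergence-objective-unconstrained} and continuity of $x \mapsto x^\gamma$, we have $(F(\theta_k)-\Flb)^\gamma \to (F_{\lim}-\Flb)^\gamma$ almost surely, so it suffices to show the family $W_k := (F(\theta_k)-\Flb)^\gamma$ is uniformly integrable given $\mathcal{F}_0$. Here the first assertion does the work: since $W_k^{1/\gamma} = F(\theta_k) - \Flb$ with $1/\gamma > 1$, the elementary bound $W_k \Ind{W_k > C} \leq C^{1-1/\gamma} W_k^{1/\gamma}$ gives
\[
\condE{W_k \Ind{W_k > C}}{\mathcal{F}_0} \leq C^{1-1/\gamma}\, \condE{F(\theta_k)-\Flb}{\mathcal{F}_0} \leq C^{1-1/\gamma}\sup_j \condE{F(\theta_j)-\Flb}{\mathcal{F}_0},
\]
whose right-hand side tends to zero as $C \to \infty$ uniformly in $k$ because $1-1/\gamma < 0$ and the supremum is finite. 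Conditional uniform integrability together with almost-sure convergence yields, by the conditional form of the Vitali convergence theorem, $\condE{|W_k - (F_{\lim}-\Flb)^\gamma|}{\mathcal{F}_0} \to 0$, which is the claim.

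I expect the supermartingale iteration itself to be routine once \cref{lemma-recursion-expected-unconstrained} is available; the two delicate points are (i) securing finiteness of the initial conditional expectations $\condE{F(\theta_k)}{\mathcal{F}_0}$ for $k \leq K$, which is exactly why \cref{lemma-grad-bounded-objective} is needed to tame the possibly-positive gradient coefficient before index $K$, and (ii) in the second assertion, recognizing that $\sup_k \condE{F(\theta_k)-\Flb}{\mathcal{F}_0} < \infty$ is merely an $L^1$ bound rather than uniform integrability—so $\gamma = 1$ genuinely fails—whereas for $\gamma < 1$ the \emph{same} bound is an $L^{1/\gamma}$ bound with $1/\gamma > 1$ and therefore supplies the uniform integrability that drives the argument.
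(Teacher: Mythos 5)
Your proposal is correct and follows essentially the same route as the paper: iterate \cref{lemma-recursion-expected-unconstrained}, bound the accumulated exponential factor by $\exp(LC_2S/2)$ using \cref{P2}, and then deduce the $L^1$ statement for $\gamma \in (0,1)$ from the resulting $L^{1/\gamma}$-boundedness, uniform integrability, and the almost-sure convergence supplied by \cref{corollary-convergence-objective-unconstrained}. The one place you go beyond the paper is in justifying that $\inlinecondE{F(\theta_K)-\Flb}{\mathcal{F}_0}$ is finite for the initial index $K$ (via a finite induction on $k \leq K$ using the first inequality of \cref{lemma-recursion-unconstrained} together with the deterministic bound of \cref{lemma-grad-bounded-objective}); the paper merely asserts this finiteness, so your extra step closes a small gap rather than introducing one.
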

\begin{proof}
Applying \cref{lemma-recursion-expected-unconstrained} recursively,
\begin{equation}
\begin{aligned}
&\condE{F(\theta_{k+1}) - \Flb}{\mathcal{F}_K} + \left[ \frac{LC_1}{2}+\frac{L}{2}\left(\frac{1-\alpha}{1+\alpha}\right)\right] \sum_{j=k+1}^{\infty} \lambda_{\max}(M_j)^{1+\alpha}  \\
&\leq \exp\left(\frac{LC_2}{2} \sum_{j=K}^k \lambda_{\max}(M_j)^{1+\alpha}\right) \\
&\quad \times \left[ F(\theta_K) - \Flb + \left[ \frac{LC_1}{2}+\frac{L}{2}\left(\frac{1-\alpha}{1+\alpha}\right)\right] \sum_{j=K}^{\infty} \lambda_{\max}(M_j)^{1+\alpha} \right].
\end{aligned}
\end{equation}

By \cref{P2} and given that $K \in \mathbb{N}$ is a constant,
\begin{equation}
\begin{aligned}
&\condE{F(\theta_{k+1}) - \Flb}{\mathcal{F}_0} + \left[ \frac{LC_1}{2}+\frac{L}{2}\left(\frac{1-\alpha}{1+\alpha}\right)\right] \sum_{j=k+1}^{\infty} \lambda_{\max}(M_j)^{1+\alpha}  \\
&\leq \exp\left(\frac{LC_2}{2}S \right) \left[ \condE{F(\theta_K) - \Flb}{\mathcal{F}_0} + \left[ \frac{LC_1}{2}+\frac{L}{2}\left(\frac{1-\alpha}{1+\alpha}\right)\right] S \right],
\end{aligned}
\end{equation}
for which the right hand side is finite with probability one. Hence, $\sup_{k} \inlinecondE{ F(\theta_k) }{\mathcal{F}_0} < \infty$ with probability one. (Note, we can now apply Fatou's lemma to prove $\E{ F_{\lim}} < \infty$, if it were not already provided for in \citet{neveu1975}.)

For the final part of the proof, we note that $\gamma = 0$ is trivial. So, take $\gamma \in (0,1)$. Then, $\lbrace (F(\theta_k) - \Flb)^{\gamma} \rbrace$ are bounded in $L^{1/\gamma}$ (condition on $\mathcal{F}_0$), as we have just shown. Thus, $\lbrace (F(\theta_k) - \Flb)^\gamma \rbrace$ are uniformly integrable and, by \cref{corollary-convergence-objective-unconstrained}, $\lbrace (F(\theta_k) - \Flb)^\gamma \rbrace$ converges to $(F_{\lim}-\Flb)^\gamma$ in $L^1$. 
\end{proof}

\subsection{Asymptotic Behavior of the Gradient Function} \label{subsection-global-holder-gradient}
Just as we did before, we now prove that the gradient norm evaluated at SGD's iterates must, repeatedly, get arbitrarily close to zero. We use the strategy of \citet{patel2020}.

\begin{lemma} \label{lemma-liminf-grad-unconstrained}
Let $\lbrace \theta_k \rbrace$ be defined as in \eqref{eqn:iterates} satisfying \cref{P1,P2,P3,P4}.
Suppose \cref{A1,A2,A3b,A4b} hold.
Then, for all $\delta > 0$.
\begin{equation}
\condPrb{ \norm{ \dot{F}(\theta_k)}_2^2  \leq \delta, ~i.o. }{\mathcal{F}_0} = 1,~ w.p.1.
\end{equation}
\end{lemma}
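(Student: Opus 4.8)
The plan is to adapt the argument of \cref{lemma-liminf-grad-constrained}, substituting the global recursion of \cref{lemma-recursion-unconstrained} for the constrained recursion of \cref{lemma-recursion-constrained}. Because \cref{A4b} supplies a single global H\"older constant, the indicator $\Ind{\mathcal{B}_k(R)}$ and the compact-set quantities $G_R, L_{R+1}, \partial F_R$ all disappear; the role formerly played by the compact-set bound $G_R$ is now taken over by the expected-smoothness bound of \cref{A3b}. I would work from the second (i.e., $k \geq K$) form of \cref{lemma-recursion-unconstrained}, in which \cref{P4,Lemma4} have already absorbed the $\frac{LC_3}{2}\lambda_{\max}(M_k)^{1+\alpha}$ term so that the gradient term appears with coefficient $\frac{1}{2}\lambda_{\min}(M_k)$.

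First I would rearrange that recursion to isolate the gradient term: for $k \geq K$,
\begin{equation}
\frac{1}{2}\lambda_{\min}(M_k) \norm{ \dot{F}(\theta_k)}_2^2 \leq [F(\theta_k) - \Flb] - \condE{F(\theta_{k+1}) - \Flb}{\mathcal{F}_k} + \frac{LC_2}{2} [F(\theta_k) - \Flb] \lambda_{\max}(M_k)^{1+\alpha} + c\, \lambda_{\max}(M_k)^{1+\alpha},
\end{equation}
with $c = \frac{L}{1+\alpha}\left(\frac{1+\alpha}{2}C_1 + \frac{1-\alpha}{2}\right)$. Taking $\condE{\cdot}{\mathcal{F}_0}$, applying the tower property, and summing $k$ from $K$ to $j$, the first two terms telescope to $\condE{F(\theta_K) - \Flb}{\mathcal{F}_0} - \condE{F(\theta_{j+1}) - \Flb}{\mathcal{F}_0}$, which is at most $\condE{F(\theta_K) - \Flb}{\mathcal{F}_0}$ by \cref{A1}.

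The step I expect to be the main obstacle, relative to the constrained case, is controlling the extra term $\frac{LC_2}{2}[F(\theta_k) - \Flb]\lambda_{\max}(M_k)^{1+\alpha}$, which appears precisely because expected smoothness couples the noise to the (now unbounded) objective value. I would bound its partial sums by factoring out $\sup_k \condE{F(\theta_k) - \Flb}{\mathcal{F}_0}$, which is finite with probability one by \cref{corollary-convergence-expected-objective-unconstrained}, leaving $\sum_k \lambda_{\max}(M_k)^{1+\alpha} \leq S < \infty$ by \cref{P2}; the remaining $c\,\lambda_{\max}(M_k)^{1+\alpha}$ terms are summable by \cref{P2} as well. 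This yields $\sum_{k=K}^\infty \lambda_{\min}(M_k) \condE{\norm{\dot{F}(\theta_k)}_2^2}{\mathcal{F}_0} < \infty$ with probability one, whence \cref{P3} forces $\liminf_k \condE{\norm{\dot{F}(\theta_k)}_2^2}{\mathcal{F}_0} = 0$ with probability one.

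Finally, I would close the argument exactly as in \cref{lemma-liminf-grad-constrained}: for each $\delta > 0$ and each $j$, Markov's inequality gives $\condPrb{\bigcap_{k=j}^\infty \lbrace \norm{\dot F(\theta_k)}_2^2 > \delta \rbrace}{\mathcal{F}_0} \leq \delta^{-1}\inf_{k \geq j}\condE{\norm{\dot F(\theta_k)}_2^2}{\mathcal{F}_0}$, and the right-hand side is zero by the $\liminf$ statement. Since the complement of $\lbrace \norm{\dot F(\theta_k)}_2^2 \leq \delta, ~i.o. \rbrace$ is the countable union over $j$ of these null events, the i.o. claim holds with probability one.
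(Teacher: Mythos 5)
Your proposal is correct and follows essentially the same route as the paper's own proof: rearranging the $k \geq K$ form of \cref{lemma-recursion-unconstrained}, taking conditional expectation given $\mathcal{F}_0$, telescoping the objective terms, absorbing the $C_2$-coupled term via $M_0 = \sup_k \inlinecondE{F(\theta_k) - \Flb}{\mathcal{F}_0}$ from \cref{corollary-convergence-expected-objective-unconstrained} together with \cref{P2}, invoking \cref{P3} for the $\liminf$ statement, and closing with Markov's inequality and a countable union of null events. No gaps; this matches the paper's argument step for step.
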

\begin{proof}
By \cref{lemma-recursion-unconstrained}, there exists $K \in \mathbb{N}$ such that for all $k \geq K$,
\begin{equation}
\begin{aligned}
&\frac{1}{2}\lambda_{\min}(M_k) \norm{ \dot{F}(\theta_k)}_2^2 
\leq [ F(\theta_k) - \Flb ] - \condE{F(\theta_{k+1}) - \Flb}{\mathcal{F}_k}  \\
&\quad + [ F(\theta_k) - \Flb ]\frac{LC_2}{2} \lambda_{\max}(M_k)^{1+\alpha} 
+ \frac{L}{1+\alpha}\lambda_{\max}(M_k)^{1+\alpha}\left(\frac{1+\alpha}{2}C_1+\frac{1-\alpha}{2} \right).
\end{aligned}
\end{equation}

Now, taking expectation with respect to $\mathcal{F}_0$ and applying \cref{corollary-convergence-expected-objective-unconstrained} with $M_0 := \sup_{k} \inlinecondE{ F(\theta_k) - \Flb }{\mathcal{F}_0}$, 
\begin{equation}
\begin{aligned}
&\frac{1}{2}\lambda_{\min}(M_k) \condE{\norm{ \dot{F}(\theta_k)}_2^2}{\mathcal{F}_0} 
\leq \condE{ F(\theta_k) - \Flb }{\mathcal{F}_0} - \condE{F(\theta_{k+1}) - \Flb}{\mathcal{F}_0}  \\
&\quad + M_0\frac{LC_2}{2} \lambda_{\max}(M_k)^{1+\alpha} 
+ \frac{L}{1+\alpha}\lambda_{\max}(M_k)^{1+\alpha}\left(\frac{1+\alpha}{2}C_1+\frac{1-\alpha}{2} \right).
\end{aligned}
\end{equation}

Summing from over all $k \geq K$, using \cref{P2}, and \cref{A1},
\begin{equation}
\begin{aligned}
&\frac{1}{2} \sum_{k=K}^\infty \lambda_{\min}(M_k) \condE{\norm{ \dot{F}(\theta_k)}_2^2}{\mathcal{F}_0} 
\leq M_0 +  \frac{SL}{2} \left[ M_0 C_2 + C_1+\frac{1-\alpha}{1+\alpha} \right].
\end{aligned}
\end{equation}
Therefore, by \cref{P3}, we conclude that $\liminf_k \inlinecondE{ \inlinenorm{ \dot F(\theta_k) }_2^2}{\mathcal{F}_0} = 0$ with probability one.

Now, for any $\delta > 0$, Markov's inequality implies that for all $j \geq K$,
\begin{equation}
\condPrb{ \bigcap_{k=j}^\infty \left\lbrace \norm{ \dot{F}(\theta_k)}_2^2  > \delta \right\rbrace }{\mathcal{F}_0} \leq \frac{1}{\delta} \min_{j \leq k} \condE{ \norm{ \dot{F}(\theta_k)}_2^2  }{\mathcal{F}_0} = 0,
\end{equation}
with probability one. The countable union of measure zero sets has measure zero. Therefore, the conclusion follows.
\end{proof}

We now prove that the limit infimum and limit supremum of $\lbrace \inlinenorm{ \dot F(\theta_k) }_2 \rbrace$ cannot be distinct. 

\begin{lemma} \label{lemma-limsup-grad-unconstrained}
Let $\lbrace \theta_k \rbrace$ be defined as in \eqref{eqn:iterates} satisfying \cref{P1,P2,P4}. 
Suppose \cref{A1,A2,A3b,A4b} hold. 
Then, for all $\delta > 0$,
\begin{equation}
\condPrb{  \norm{ \dot{F}(\theta_{k+1})}_2 > \delta, \norm{ \dot{F}(\theta_k)}_2 \leq \delta , ~i.o.}{\mathcal{F}_0} = 0,
\end{equation}
with probability one.
\end{lemma}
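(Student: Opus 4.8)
The plan is to mirror the proof of \cref{lemma-limsup-grad-constrained}, but to use the global $\alpha$-H\"{o}lder constant $L$ from \cref{A4b} in place of the local constant $L_R$, and the expected smoothness bound of \cref{A3b} in place of the uniform bound $G_R$ that compactness provided on the constrained event. First I would fix $\delta > 0$ and $\epsilon > 0$, and observe that on the event $\lbrace \norm{\dot F(\theta_k)}_2 \leq \delta \rbrace$, the reverse triangle inequality, \cref{A4b}, and \eqref{eqn:iterates} give
\begin{equation}
\norm{\dot F(\theta_{k+1})}_2 \leq \delta + L\norm{\theta_{k+1}-\theta_k}_2^\alpha = \delta + L\norm{M_k \dot f(\theta_k, X_{k+1})}_2^\alpha.
\end{equation}
Consequently the one-step event $\lbrace \norm{\dot F(\theta_{k+1})}_2 > \delta + L\epsilon^\alpha,\ \norm{\dot F(\theta_k)}_2 \leq \delta \rbrace$ is contained in $\lbrace \norm{M_k \dot f(\theta_k, X_{k+1})}_2 > \epsilon,\ \norm{\dot F(\theta_k)}_2 \leq \delta \rbrace$, which reduces matters to a large-deviation estimate for the scaled stochastic gradient, exactly as in the constrained case.

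The second step is to bound the conditional probability of this reduced event by conditional Chebyshev. Since $\Ind{\norm{\dot F(\theta_k)}_2 \leq \delta}$ is $\mathcal{F}_k$-measurable, the tower property (with $\mathcal{F}_0 \subset \mathcal{F}_k$) and \cref{A3b} bound the relevant second moment by $(C_1 + C_2(F(\theta_k)-\Flb) + C_3\norm{\dot F(\theta_k)}_2^2)\Ind{\norm{\dot F(\theta_k)}_2 \leq \delta}$. The key observation is that the two potentially unbounded pieces are now controlled separately: the gradient term is at most $C_3\delta^2$ on this event, while the objective term, after the outer conditioning on $\mathcal{F}_0$, is absorbed by the almost-surely finite supremum $M_0 := \sup_k \condE{F(\theta_k)-\Flb}{\mathcal{F}_0}$ supplied by \cref{corollary-convergence-expected-objective-unconstrained}. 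Using $\norm{M_k \dot f}_2 \leq \norm{M_k}_2 \norm{\dot f}_2$, this yields
\begin{equation}
\condPrb{\norm{\dot F(\theta_{k+1})}_2 > \delta + L\epsilon^\alpha,\ \norm{\dot F(\theta_k)}_2 \leq \delta}{\mathcal{F}_0} \leq \frac{\norm{M_k}_2^2}{\epsilon^2}\left(C_1 + C_2 M_0 + C_3\delta^2\right).
\end{equation}

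Finally, because \cref{P2} forces $\lambda_{\max}(M_k) \to 0$, we have $\norm{M_k}_2^2 = \lambda_{\max}(M_k)^2 \leq \lambda_{\max}(M_k)^{1+\alpha}$ for all large $k$ (as $2 \geq 1+\alpha$), so $\sum_k \norm{M_k}_2^2 < \infty$ and the summed conditional probabilities are finite with probability one. A conditional Borel-Cantelli argument—equivalently, conditional Markov applied to the almost-surely finite count $\sum_k \Ind{\cdots}$—then gives that the associated i.o. event has conditional probability zero, and letting $\epsilon_n \downarrow 0$ along a countable sequence removes the $L\epsilon^\alpha$ slack (a countable union of null events being null), recovering the stated claim for every $\delta > 0$.

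I expect the main obstacle to be the loss of the uniform second-moment bound: unlike in \cref{lemma-limsup-grad-constrained}, there is no compact set to furnish a $G_R$, and $\condE{\norm{\dot f(\theta_k, X_{k+1})}_2^2}{\mathcal{F}_k}$ can grow without bound. The resolution is to split this growth via \cref{A3b} into a gradient part, tamed by the very conditioning event $\lbrace \norm{\dot F(\theta_k)}_2 \leq \delta \rbrace$, and an objective part, tamed by the uniform bound on $\condE{F(\theta_k)}{\mathcal{F}_0}$ from \cref{corollary-convergence-expected-objective-unconstrained}. Making these two estimates combine cleanly under the nested conditioning—first on $\mathcal{F}_k$ to invoke \cref{A3b}, then on $\mathcal{F}_0$ to invoke $M_0$—is the only genuinely new point relative to the constrained proof.
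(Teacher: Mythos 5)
Your proposal is correct and takes essentially the same route as the paper's own proof: the same reduction via the reverse triangle inequality and \cref{A4b} to the event $\lbrace \norm{M_k \dot f(\theta_k, X_{k+1})}_2 > \epsilon \rbrace$, the same conditional Chebyshev bound that splits \cref{A3b} into a gradient term tamed by the conditioning event and an objective term tamed by $M_0$ from \cref{corollary-convergence-expected-objective-unconstrained}, followed by Borel--Cantelli and the $\epsilon_n \downarrow 0$ union argument. If anything, you are slightly more explicit than the paper on two points it glosses over: why \cref{P2} makes $\sum_k \norm{M_k}_2^2$ finite (via $\lambda_{\max}(M_k) \to 0$ and $2 \geq 1+\alpha$) and that the Borel--Cantelli step must be applied in its conditional form.
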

\begin{proof}
Let $\epsilon > 0$. For $\delta > 0$,
\begin{align}
&\condPrb{ \norm{ \dot{F}(\theta_{k+1})}_2 \Ind{ \norm{ \dot{F}(\theta_k)}_2  \leq \delta } > \delta + L \epsilon^\alpha }{\mathcal{F}_0} \\
&= \mathbb{P} \bigg{[} \left( \norm{ \dot{F}(\theta_{k+1})}_2 - \norm{\dot{F}(\theta_k)}_2 + \norm{\dot F(\theta_k)}_2 \right) 
\Ind{ \norm{ \dot{F}(\theta_k)}_2  \leq \delta } > \delta + L \epsilon^\alpha \bigg{\vert}  \mathcal{F}_0 \bigg{]} \\
&\leq \mathbb{P} \bigg{[} L \norm{ \theta_{k+1} - \theta_k}_2^\alpha \Ind{ \norm{ \dot{F}(\theta_k)}_2 \leq \delta } > L \epsilon^\alpha \bigg{\vert} \mathcal{F}_0 \bigg{]} \\
&= \mathbb{P} \bigg{[} \norm{ M_k \dot f(\theta_k, X_{k+1} )}_2 \Ind{ \norm{ \dot{F}(\theta_k)}_2 \leq \delta } > \epsilon\bigg{\vert} \mathcal{F}_0 \bigg{]} \\
&\leq \frac{1}{\epsilon^2} \norm{M_k}_2^2 \condE{ \norm{ \dot f(\theta_k, X_{k+1}) }_2^2 \Ind{ \norm{ \dot{F}(\theta_k)}_2 \leq \delta } }{\mathcal{F}_0} \\
&\leq \frac{1}{\epsilon^2} \norm{M_k}_2^2 \condE{ C_1 + C_2 ( F(\theta_k) - \Flb ) + (C_3-1) \delta^2 }{\mathcal{F}_0},
\end{align}
where we make use of \cref{A3b} in the last line. Moreover, by \cref{corollary-convergence-expected-objective-unconstrained}, we conclude
\begin{equation}
\begin{aligned}
&\condPrb{ \norm{ \dot{F}(\theta_{k+1})}_2 \Ind{ \norm{ \dot{F}(\theta_k)}_2  \leq \delta } > \delta + L \epsilon^\alpha }{\mathcal{F}_0} \\
&\quad \leq \frac{1}{\epsilon^2} \norm{M_k}_2^2 \left( C_1 + C_2 M_0 + (C_3-1) \delta^2 \right),
\end{aligned}
\end{equation}
where $M_0 = \sup_k \inlinecondE{ F(\theta_k) - \Flb}{\mathcal{F}_0}$ is finite.

By \cref{P2}, the sum of the last expression over all $k+1 \in \mathbb{N}$ is finite. By the Borel-Cantelli lemma, for all $\delta > 0$ and $\epsilon > 0$,
\begin{equation}
\condPrb{  \norm{ \dot{F}(\theta_{k+1})}_2 > \delta + L \epsilon^\alpha, \norm{ \dot{F}(\theta_k)}_2 \leq \delta , ~i.o.}{\mathcal{F}_0} = 0,
\end{equation}
with probability one.
Since this holds for any $\epsilon > 0$, it will hold for every value in a sequence $\epsilon_n \downarrow 0$. Since the countable union of measure zero events has measure zero,
\begin{equation}
\condPrb{  \norm{ \dot{F}(\theta_{k+1})}_2  > \delta, \norm{ \dot{F}(\theta_k)}_2 \leq \delta , ~i.o.}{\mathcal{F}_0} = 0,
\end{equation}
with probability one.
\end{proof}

We now put the two preceding lemmas together to prove the result.
\begin{corollary} \label{corollary-convergence-grad-unconstrained}
Let $\lbrace \theta_k \rbrace$ be defined as in \eqref{eqn:iterates} satisfying \cref{P1,P2,P3,P4}. 
Suppose \cref{A1,A2,A3b,A4b} hold. 
Then, $\lim_{k \to \infty} \inlinenorm{ \dot{F}(\theta_k)}_2 = 0$ with probability one.
\end{corollary}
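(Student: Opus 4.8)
The plan is to combine the two preceding lemmas exactly as in the constrained case (\cref{corollary-convergence-grad-constrained}), noting that the global H\"older assumption has already eliminated the indicator functions $\Ind{\mathcal{B}_k(R)}$ that burdened that earlier argument, so the bookkeeping here is cleaner. Fix $\delta > 0$; the goal is to show that $\condPrb{ \norm{ \dot F(\theta_{k+1})}_2 > \delta, ~i.o.}{\mathcal{F}_0} = 0$ with probability one, and then to let $\delta \downarrow 0$ along a countable sequence.

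First I would invoke \cref{lemma-liminf-grad-unconstrained}, which (after replacing $\delta$ by $\delta^2$) guarantees that $\condPrb{ \norm{\dot F(\theta_k)}_2 \le \delta, ~i.o.}{\mathcal{F}_0} = 1$ with probability one. Since this event has full conditional probability, intersecting it with $\lbrace \norm{\dot F(\theta_{k+1})}_2 > \delta, ~i.o.\rbrace$ leaves the latter's probability unchanged, so
\[
\condPrb{ \norm{\dot F(\theta_{k+1})}_2 > \delta, ~i.o.}{\mathcal{F}_0} = \condPrb{ \lbrace \norm{\dot F(\theta_{k+1})}_2 > \delta, ~i.o.\rbrace \cap \lbrace \norm{\dot F(\theta_k)}_2 \le \delta, ~i.o.\rbrace }{\mathcal{F}_0}.
\]
The crux is then a deterministic set identity: on any sample path for which $\norm{\dot F(\theta_k)}_2$ both exceeds $\delta$ infinitely often and falls at or below $\delta$ infinitely often, there must be infinitely many indices $k$ with $\norm{\dot F(\theta_k)}_2 \le \delta$ and $\norm{\dot F(\theta_{k+1})}_2 > \delta$. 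Indeed, if there were only finitely many such upward crossings, then beyond some index the sequence could never climb back above $\delta$ after dropping to $\delta$; picking an index past that point at which the norm is $\le \delta$ (possible since it is $\le \delta$ infinitely often) would force the norm to stay $\le \delta$ thereafter, contradicting that it exceeds $\delta$ infinitely often. Hence the intersection above is contained in the upward-crossing event $\lbrace \norm{\dot F(\theta_{k+1})}_2 > \delta,~ \norm{\dot F(\theta_k)}_2 \le \delta, ~i.o.\rbrace$, which by \cref{lemma-limsup-grad-unconstrained} has conditional probability zero. Therefore $\condPrb{\norm{\dot F(\theta_{k+1})}_2 > \delta,~i.o.}{\mathcal{F}_0} = 0$ with probability one.

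Finally, applying this along a sequence $\delta_n \downarrow 0$ and using that a countable union of null sets is null, I would conclude that $\limsup_k \norm{\dot F(\theta_k)}_2 = 0$, i.e. $\lim_{k \to \infty} \norm{\dot F(\theta_k)}_2 = 0$ with probability one. Since no further integration or recursion is required---the two lemmas already carry all of the analytic content---the only delicate point is the crossing argument together with the careful propagation of the ``with probability one'' qualifiers attached to the conditional probabilities. This is exactly where I expect the main (though modest) obstacle to lie, as opposed to any genuinely hard estimate.
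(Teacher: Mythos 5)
Your proposal is correct and follows essentially the same route as the paper's proof: combine \cref{lemma-liminf-grad-unconstrained} (the gradient norm drops below $\delta$ infinitely often) with \cref{lemma-limsup-grad-unconstrained} (upward crossings of level $\delta$ occur only finitely often with probability one), then send $\delta \downarrow 0$ along a countable sequence. The only difference is that you spell out the deterministic crossing argument and the $\delta \mapsto \delta^2$ substitution explicitly, which the paper asserts tersely ("we see that this latter event is exactly\ldots"); this is a welcome clarification, not a deviation.
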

\begin{proof}
For any $\delta > 0$, \cref{lemma-liminf-grad-unconstrained} implies
\begin{equation}
\begin{aligned}
&\condPrb{ \norm{\dot{F}(\theta_{k+1})}_2 > \delta, ~i.o. }{\mathcal{F}_0} \\
&\quad = \condPrb{ \left\lbrace \norm{\dot{F}(\theta_{k+1})}_2 > \delta \right\rbrace \cap \left\lbrace \norm{\dot{F}(\theta_{k})}_2 \leq \delta, ~i.o. \right\rbrace }{\mathcal{F}_0},
\end{aligned}
\end{equation}
with probability one. We see that this latter event is exactly, 
\begin{equation}
\condPrb{  \norm{ \dot{F}(\theta_{k+1})}_2  > \delta, \norm{ \dot{F}(\theta_k)}_2 \leq \delta , ~i.o.}{\mathcal{F}_0},
\end{equation}
which, by \cref{lemma-limsup-grad-unconstrained}, is zero with probability one. Therefore, $\condPrb{ \inlinenorm{\dot{F}(\theta_{k+1})}_2  > \delta, ~i.o. }{\mathcal{F}_0}$ is zero with probability one. Letting $\delta_n \downarrow 0$ and noting that the countable union of measure zero sets has measure zero, we conclude $\inlinecondPrb{ \inlinenorm{\dot F(\theta_{k})}_2 > 0, ~i.o. }{\mathcal{F}_0} = 0$ with probability one. In other words, $\inlinecondPrb{ \lim_{k \to \infty} \inlinenorm{ \dot F(\theta_{k}) }_2 = 0 }{\mathcal{F}_0} = 1$ with probability one.
\end{proof}

\subsection{Asymptotic Behavior of the Expected Gradient Function} \label{subsection-global-holder-expected-gradient}

We begin by proving that $\sup_k \inlinecondE{ \inlinenorm{ \dot{F}(\theta_k)}_2^2 }{\mathcal{F}_0}$ is finite with probability one. As a result, we will have that $\lbrace \dot{F}(\theta_k) \rbrace$ are uniformly integrable, which, with \cref{corollary-convergence-grad-unconstrained}, implies $L^1$ convergence.

\begin{lemma} \label{lemma-grad-bounded-objective}
Suppose \cref{A1,A4b} hold. Then, for all $\phi \in \mathbb{R}^p$,
	\begin{align}
	\norm{ \dot{F}(\phi)}_2^2 \leq \left( \frac{L^{\frac{1}{\alpha}}(1+\alpha)}{\alpha} [F(\phi) - \Flb] \right) ^{\frac{2\alpha}{1+\alpha}},
	\end{align}
where $2\alpha/(1+ \alpha) \leq 1$ for all $\alpha \in (0,1]$.

Moreover, let $\lbrace \theta_k \rbrace$ be defined as in \eqref{eqn:iterates} satisfying \cref{P1,P2,P4}. 
Suppose \cref{A1,A2,A3b,A4b} hold. Then, $\sup_k \inlinecondE{ \inlinenorm{ \dot F (\theta_k) }_2^2}{\mathcal{F}_0} < \infty$ with probability one.
\end{lemma}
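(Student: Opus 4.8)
The plan is to treat the two parts separately: the first claim is a purely deterministic, pointwise gradient inequality relating $\norm{\dot F(\phi)}_2$ to $F(\phi) - \Flb$, and the second promotes that pointwise bound into a uniform-in-$k$ bound on the conditional second moment by invoking the already-established control on $\sup_k \condE{F(\theta_k)}{\mathcal{F}_0}$ from \cref{corollary-convergence-expected-objective-unconstrained}.

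For the first part, I would appeal to the global H\"older descent inequality in \cref{lemma-ftc}: under \cref{A4b}, for every $\phi, \theta \in \mathbb{R}^p$,
$$F(\theta) \leq F(\phi) + \dot F(\phi)'(\theta - \phi) + \frac{L}{1+\alpha}\norm{\theta - \phi}_2^{1+\alpha}.$$
The idea is to evaluate this along a gradient step $\theta = \phi - t\,\dot F(\phi)$ for a free parameter $t > 0$, which yields
$$F(\phi - t\dot F(\phi)) \leq F(\phi) - t\norm{\dot F(\phi)}_2^2 + \frac{L}{1+\alpha}\, t^{1+\alpha}\norm{\dot F(\phi)}_2^{1+\alpha}.$$
Since \cref{A1} gives $F(\phi - t\dot F(\phi)) \geq \Flb$, rearranging produces, for every $t > 0$,
$$t\norm{\dot F(\phi)}_2^2 - \frac{L}{1+\alpha}t^{1+\alpha}\norm{\dot F(\phi)}_2^{1+\alpha} \leq F(\phi) - \Flb.$$
I would then maximize the left-hand side over $t$; the optimizer is $t_* = (\norm{\dot F(\phi)}_2^{1-\alpha}/L)^{1/\alpha}$, and substituting it back collapses the left-hand side to $\frac{\alpha}{(1+\alpha)L^{1/\alpha}}\norm{\dot F(\phi)}_2^{(1+\alpha)/\alpha}$. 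Solving the resulting inequality for $\norm{\dot F(\phi)}_2^{(1+\alpha)/\alpha}$ and raising both sides to the power $2\alpha/(1+\alpha)$ then gives exactly the claimed bound.

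For the second part, I would apply the pointwise inequality at each iterate $\theta_k$, writing $\norm{\dot F(\theta_k)}_2^2 \leq C_5\,(F(\theta_k) - \Flb)^{\gamma}$ with $\gamma = 2\alpha/(1+\alpha)$ and $C_5 = (L^{1/\alpha}(1+\alpha)/\alpha)^{\gamma}$. Taking conditional expectation given $\mathcal{F}_0$ and noting that $\gamma \in (0,1]$, so that $x \mapsto x^\gamma$ is concave on $[0,\infty)$, Jensen's inequality gives $\condE{(F(\theta_k)-\Flb)^\gamma}{\mathcal{F}_0} \leq \condE{F(\theta_k)-\Flb}{\mathcal{F}_0}^\gamma$. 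The right-hand side is at most $M_0^\gamma$, where $M_0 = \sup_k \condE{F(\theta_k)-\Flb}{\mathcal{F}_0}$ is finite with probability one by \cref{corollary-convergence-expected-objective-unconstrained}. Since this bound is uniform in $k$, the supremum over $k$ of $\condE{\norm{\dot F(\theta_k)}_2^2}{\mathcal{F}_0}$ is finite with probability one.

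The only genuinely delicate step is the exponent bookkeeping in the first part: one must check that the optimizing $t_*$ is admissible (it is positive whenever $\dot F(\phi) \neq 0$, and the degenerate case $\dot F(\phi) = 0$ is trivial since then the left side of the claim is zero while the right side is nonnegative by \cref{A1}), and that the several fractional powers combine to yield precisely $2\alpha/(1+\alpha)$ rather than a nearby exponent. The probabilistic content of the second part is light---it rests entirely on the earlier corollary and a single application of Jensen's inequality---so the main obstacle is purely the algebra of the optimized descent inequality.
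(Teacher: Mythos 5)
Your proposal is correct and follows essentially the same route as the paper: the paper's proof also minimizes the descent inequality from \cref{lemma-ftc} over $\theta$ (whose minimizer lies exactly along your gradient-step parametrization $\theta = \phi - t\,\dot F(\phi)$), and then bounds $\sup_k \inlinecondE{\inlinenorm{\dot F(\theta_k)}_2^2}{\mathcal{F}_0}$ via $M_0$ from \cref{corollary-convergence-expected-objective-unconstrained}. The only cosmetic difference is that the paper invokes H\"{o}lder's inequality where you invoke Jensen's for the concave power $x \mapsto x^{2\alpha/(1+\alpha)}$; these are interchangeable here.
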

\begin{proof}
By \cref{lemma-ftc} and \cref{A1}, for any $\phi,\theta \in \mathbb{R}^p$,
\begin{equation}
0 \leq F(\phi) - \Flb + \dot{F}(\phi)'(\theta - \phi) + \frac{L}{1+\alpha} \norm{ \theta - \phi}_2^{1+\alpha}.
\end{equation}

We now find the $\theta$ that minimizes this upper bound, and plug it back into the upper bound. By rearranging, we conclude the result. 

For the second part, by \cref{corollary-convergence-expected-objective-unconstrained}, $M_0 := \sup_k \inlinecondE{ F(\theta_k) - \Flb}{\mathcal{F}_0} < \infty$ with probability one. By plugging $\theta_k$ into the first part of the result, taking expectations and applying H\"{o}lder's inequality,
\begin{equation}
\condE{ \norm{ \dot F(\theta_k)}_2^2}{\mathcal{F}_0} \leq \left( \frac{L^{\frac{1}{\alpha}}(1+\alpha)}{\alpha} M_0 \right)^{\frac{2\alpha}{1+\alpha}},
\end{equation}
with probability one. The result follows. 
\end{proof}

\begin{corollary} \label{corollary-convergence-expected-grad-unconstrained}
Let $\lbrace \theta_k \rbrace$ be defined as in \eqref{eqn:iterates} satisfying \cref{P1,P2,P3,P4}. 
Suppose \cref{A1,A2,A3b,A4b} hold. 
Then, $\lim_{k \to \infty} \inlinecondE{\inlinenorm{ \dot{F}(\theta_k)}_2}{\mathcal{F}_0} = 0$ with probability one.
\end{corollary}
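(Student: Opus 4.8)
The plan is to combine the two results that immediately precede this corollary: \cref{corollary-convergence-grad-unconstrained} guarantees that, with probability one, $\inlinenorm{\dot F(\theta_k)}_2 \to 0$ conditionally on $\mathcal{F}_0$ (i.e., on an $\mathcal{F}_0$-measurable event of probability one), while \cref{lemma-grad-bounded-objective} guarantees that, with probability one, $M := \sup_k \inlinecondE{\inlinenorm{\dot F(\theta_k)}_2^2}{\mathcal{F}_0} < \infty$. Since almost-sure convergence together with boundedness in (conditional) $L^2$ yields (conditional) $L^1$ convergence --- the conditional form of the Vitali/uniform-integrability criterion --- and the almost-sure limit is zero, I would conclude that $\inlinecondE{\inlinenorm{\dot F(\theta_k)}_2}{\mathcal{F}_0} \to 0$. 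First I would restrict attention to the $\mathcal{F}_0$-measurable probability-one event on which both preceding conclusions hold simultaneously, and argue entirely on this event.

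To make the uniform-integrability step concrete rather than invoking a theorem, I would fix $\epsilon > 0$ and split the gradient norm according to whether it exceeds $\epsilon$:
\begin{align*}
\condE{\norm{\dot F(\theta_k)}_2}{\mathcal{F}_0} &= \condE{\norm{\dot F(\theta_k)}_2 \Ind{\norm{\dot F(\theta_k)}_2 \leq \epsilon}}{\mathcal{F}_0} \\
&\quad + \condE{\norm{\dot F(\theta_k)}_2 \Ind{\norm{\dot F(\theta_k)}_2 > \epsilon}}{\mathcal{F}_0}.
\end{align*}
The first term is bounded by $\epsilon$. For the second term, I would apply the conditional Cauchy--Schwarz inequality to obtain the bound $\inlinecondE{\inlinenorm{\dot F(\theta_k)}_2^2}{\mathcal{F}_0}^{1/2}\, \inlinecondPrb{\inlinenorm{\dot F(\theta_k)}_2 > \epsilon}{\mathcal{F}_0}^{1/2} \leq \sqrt{M}\, \inlinecondPrb{\inlinenorm{\dot F(\theta_k)}_2 > \epsilon}{\mathcal{F}_0}^{1/2}$, using the uniform $L^2$ bound from \cref{lemma-grad-bounded-objective}. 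Since $\inlinenorm{\dot F(\theta_k)}_2 \to 0$ with probability one, the indicators $\inlineInd{\inlinenorm{\dot F(\theta_k)}_2 > \epsilon}$ converge to zero almost surely and are dominated by $1$, so the conditional dominated convergence theorem forces $\inlinecondPrb{\inlinenorm{\dot F(\theta_k)}_2 > \epsilon}{\mathcal{F}_0} \to 0$. Taking $\limsup_k$ then gives $\limsup_k \inlinecondE{\inlinenorm{\dot F(\theta_k)}_2}{\mathcal{F}_0} \leq \epsilon$, and letting $\epsilon \downarrow 0$ completes the argument.

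Since every individual step is elementary, the only real subtlety --- and the place I would be most careful --- is the bookkeeping of the conditioning and the null sets: each of the inputs holds only ``with probability one'' over $\mathcal{F}_0$, so I must work on the intersection of the relevant probability-one events (still probability one) and invoke the conditional, rather than unconditional, forms of Cauchy--Schwarz and dominated convergence. No genuinely hard estimate is required; the content of the corollary is carried entirely by the two preceding results, and this proof is simply the standard packaging of conditional $L^2$-boundedness together with almost-sure convergence into $L^1$ convergence to zero.
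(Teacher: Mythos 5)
Your proposal is correct and follows essentially the same route as the paper: both rest on \cref{lemma-grad-bounded-objective} (conditional $L^2$ boundedness, hence uniform integrability) combined with the almost-sure convergence from \cref{corollary-convergence-grad-unconstrained}. The only difference is that the paper invokes the uniform-integrability criterion abstractly, whereas you unpack it explicitly via the $\epsilon$-split, conditional Cauchy--Schwarz, and conditional dominated convergence --- a valid and slightly more self-contained rendering of the same argument.
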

\begin{proof}
By \cref{lemma-grad-bounded-objective}, $\lbrace \inlinenorm{\dot F(\theta_k)}_2 \rbrace$ are bounded in $L^2$. Therefore, the sequence is uniformly integrable. In light of the uniform integrability of the sequence and \cref{corollary-convergence-grad-unconstrained}, we can conclude the result.
\end{proof}


\end{document}